\newcommand{\R}{\mathbb{R}}
\newcommand{\G}{{\mathfrak{g}}}
\newcommand{\h}{{\mathfrak{h}}}
\newcommand{\B}{{\mathfrak{b}}}
\newcommand{\om}{\omega}
\newcommand{\e}{\check{e}}
\newtheorem{Def}{Definition}
\newtheorem{theo}{Theorem}
\newtheorem{pr}{Proposition}
\newtheorem{Le}{Lemma}
\newtheorem{co}{Corollary}
\newtheorem{exem}{Example}
\newtheorem{remark}{Remark}
\title{A complete classification of symplectic forms on six-dimensional  Frobeniusian real Lie algebras }
\author{T. A\"it Aissa, S. Elbourkadi and M. W. Mansouri\\
Department of Mathematics, Faculty of Sciences, Ibn Tofail University\\
	Analysis, Geometry and Applications Laboratory $($LAGA$)$\\ Kenitra, Morocco\\e-mail:
	mansourimohammed.wadia@uit.ac.ma\\tarik.aitaissa @uit.ac.ma\\said.elbourkadi @uit.ac.ma}
\begin{document}
	\maketitle
\begin{abstract}
In this paper, we give a complete classification of symplectic structures on  six-dimensional Frobeniusian solvable Lie algebras, up to symplectomorphism. We provide a scheme to
classify  the isomorphism classes of six-dimensional Frobeniusian solvable Lie algebras whose exact form has a Lagrangian ideal. We complete our classification by considering  Frobeniusian solvable Lie algebras without Lagrangian ideal.

\end{abstract}

key words: Symplectic form, Flat Lie algebras, Frobenius algebras.\\
 AMS Subject Class (2010): 17B30, 17B60, 53D05.

\section{Introduction and main result}
Let $\G$ be a finite-dimensional real Lie algebra. We say that
$(\G,\omega)$ is a {\it{symplectic Lie algebra}} if $\omega$ is a
non-degenerate skew-symmetric bilinear form on $\G$ and
\begin{equation}\label{cocy}
\big(\mathrm{d}\omega\big)(x,y,z):=\omega([x, y], z)+\omega([y, z], x)+\omega([z, x],y) = 0,
\end{equation}
for all $x$, $y$, $z\in\G$, where $\mathrm{d}$ is the Chevalley-Eilenberg differential. This is to say, $\omega$ is a non-degenerate
$2$-cocycle for the scalar cohomology of $\G$. Note that in such case,
$\G$ must be even-dimensional. We will then call $\omega$ a symplectic
structure on $\G$. There is a fundamental class of symplectic Lie algebras formed by \textit{Frobenius Lie algebras}, which admit a non-degenerate exact 2-form. Since the second cohomology group of a
Frobenius Lie algebra does not vanish in general, we may have symplectic structures that are not exacts. There is a one-to-one correspondence between symplectic Lie algebras and simply connected Lie groups with left invariant symplectic forms. In low dimensions, symplectic Lie algebras can be classified according to several approach  (\cite{Ova},
\cite{Com1}),  for the nilpotent case (\cite{Gom}, \cite{Burde2}, \cite{KGM}, \cite{Goz}, see also \cite{Fis}), and for the non-solvable case (also exact symplectic  Lie algebras) (\cite{AM}, \cite{Com2}).

Recall that two symplectic Lie algebras $(\G_1,\omega_1)$ and $(\G_2,\omega_2)$ are said to be
symplectomorphically equivalent if there exists an isomorphism of Lie algebras $\varphi : \G_1\longrightarrow\G_2$, which preserves the symplectic forms in the sense $\varphi^*\omega_2=\omega_1$.

A bilinear map $\nabla : \G\times\G\rightarrow\G$, written
as $(x,y)\mapsto\nabla_xy$, is called a \textit{connection} on $\G$. We define the torsion $\mathrm{T}=\mathrm{T}^\nabla$ as 
\begin{equation}
\mathrm{T}(x,y)=\nabla_xy-\nabla_yx-[x,y],~\forall~x,y\in\G
\end{equation}
and the curvature $\mathrm{R}=\mathrm{R}^\nabla$ as
\begin{equation}\label{Flatconnection}
\mathrm{R}(x,y)=\nabla_x\nabla_yz-\nabla_y\nabla_xz-\nabla_{[x,y]}z,~\forall~x,y,z\in\G.
\end{equation}
A \textit{ torsion-free} connection is one in which $\mathrm{T}\equiv0$.  It is equivalent for the mapping $\rho^\nabla: \G\rightarrow\mathrm{End}(\G)$ to be a representation of the Lie algebra $\G$ on itself if the curvature $\mathrm{R}$ of the connection $\nabla$ is zero. In this case, the connection $\nabla$  is called \textit{flat}.  A \textit{flat Lie algebra}  is a pair $(\G, \nabla)$, where $\nabla$ is a
torsion-free and flat connection on $\G$. \textit{Flat Lie algebras}  are known
under many different names.  \textit{Flat Lie algebras} are also called \textit{left-symmetric algebras} (\textit{LSAs} in short), \textit{Vinberg algebras}, \textit{Koszul algebras} or \textit{quasiassociative algebras}. For more details on \textit{left-symmetric algebras}, we refer the reader to the survey article \cite{Burde1} and the references therein. It is always hard to classify flat Lie algebras using algebraic isomorphisms, and it is one of the key problems of theory. The classification of 2-dimensional complex \textit{left-symmetric  algebras} was given in \cite{Bai2}, \cite{Burde3} and for the classification of 2-dimensional real \textit{left-symmetric  algebras} \cite{And}. The classification of 3-dimensional complex \textit{left-symmetric  algebras} was given in \cite{Bai}.

On the other hand, recall that two flat Lie algebras $(\G,\nabla)$ and $(\tilde{\G},\tilde{\nabla})$
are isomorphic  if there exists a linear isomorphism $\psi : \G\longrightarrow\tilde{\G}$ such
that, $\psi(\nabla_xy)=\tilde{\nabla}_{\psi(x)}\psi(y)$ for any $x,y\in\G$. 

Symplectic Lie algebras have a flat torsion-free connection  by nature, making them flat Lie algebras. 

It is known (\cite{Y}, \cite{MR}, \cite{DM}) that given a symplectic Lie algebra $(\G,\omega)$ the bilinear map given by 
\begin{equation}
\omega\big(\nabla_xy,z\big)=-\omega\big(y,[x,z]\big),~~\forall~x,y,z\in\G
\end{equation}
induces a flat torsion-free connection $\nabla$ on $\G$ that satisfies $\nabla_x y - \nabla_y  x = [x, y]$ on $\G$. If
in addition, the symplectic Lie algebra $(\G,\omega)$ admits a Lagrangian ideal $\mathfrak{j}$, then the
quotient algebra $\h=\G/\mathfrak{j}$ admits a flat torsion-free connection and the symplectic Lie
algebra $(\G,\omega)$ can be reconstructed from the flat Lie algebra $\h$ (see for instance \cite{BV}).

\textbf{Notation.}  For $\{e_i\}_{1\leq i\leq n}$  a basis of $\G$,
we denote by $\{e^i\}_{1\leq i\leq n}$ the
dual basis on $\G^\ast$ and  $e^{ij}$  the 2-form $e^i\wedge
e^j\in\wedge^2\G^*$. Set by  $\langle F \rangle:= \mathrm{span}_\R\{F\}$ the Lie
subalgebra  generated by the family $F$. A system of equations (\ref{Flatconnection}) corresponds to the \textit{LSA}-structure equations.

The main purpose of this article is to show the following theorem.

\begin{theo}\label{Principtheo}
Let $(\G,\omega)$ be a six-dimensional Frobeniusian  Lie algebra. Then, $(\G,\omega)$ is isomorphic to exactly one of the following symplectic Lie algebras$:$
\begin{center}
\captionof{table}{ Frobeniusian Lie algebra 
 for which the nilradical is four-dimensional.}\label{Theo2}
{\renewcommand*{\arraystretch}{1.4}
\small\begin{tabular}{lll}
			\hline
		Algebra& Symplectic structures&Remarks \\
			\hline
$N_{6,28}$&$\omega_1^{\pm}=e^{13}\pm(e^{15}\mp2e^{25})+e^{46}$\\
			&$\omega_2^{\pm}=\tau e^{12}+e^{13}\pm(e^{15}\mp2e^{25})+e^{46}$& $\tau\neq0$\\
$N_{6,29}^{\alpha\neq0,\beta}$&$\omega_1= e^{13}+e^{16}+e^{23}+\frac{\beta}{\alpha} e^{26}+ e^{45}$&\\
& $\omega_2=-\tau e^{12}+e^{13}+e^{16}+e^{23}+\frac{\beta}{\alpha} e^{26}+ e^{45}$&$\tau(\alpha-\beta)\neq0$\\
$N_{6,29}^{\alpha=-1,\beta=0}$&$\omega_1=e^{13}+e^{16}+e^{23}+ e^{45}$&\\
&$\omega_2=2\tau e^{12}+e^{13}+e^{16}+e^{23}+ e^{45}$&\\
&$\omega_3=e^{13}+e^{16}+e^{23}+ e^{45}+2e^{46}$&\\
&$\omega_4=2\tau^\prime e^{12}+e^{13}+e^{16}+e^{23}+ e^{45}+2e^{46}$&$\tau\tau^\prime\neq0$\\
$N_{6,29}^{\alpha\neq0,-1,\beta=0}$&$\omega_1=e^{13}+(\alpha-1)e^{16}+e^{23}+e^{45}$&\\
&$\omega_2=\tau e^{12}+e^{13}+(\alpha-1)e^{16}+e^{23}+e^{45}$&$\tau\neq0$\\
$N_{6,29}^{\alpha=0,\beta=-1}$&$\omega_1=e^{13}+e^{23}+e^{26}+e^{45}$&\\
&$\omega_2=-\tau e^{12}+ e^{13}+e^{23}+e^{26}+e^{45}$&\\
&$\omega_3= e^{13}+e^{23}+e^{26}+e^{45}+e^{56}$&\\
&$\omega_4=-\tau^\prime e^{12}+ e^{13}+e^{23}+e^{26}+e^{45}+e^{56}$&$\tau\tau^\prime\neq0$\\
$N_{6,29}^{\alpha=0,\beta\neq-1,0}$&$\omega_1=e^{13}+e^{23}+e^{26}+e^{45}$&\\
&$\omega_2=-\lambda\tau e^{12}+ e^{13}+e^{23}+e^{26}+e^{45}$&$\tau\neq0,\lambda\neq\pm1$\\
$N_{6,30}^\alpha$&$\omega_1^\pm=\pm(e^{13}+e^{45})+\alpha e^{16}+e^{26}$&\\
&$\omega_2^\pm=\tau e^{12}\pm(e^{13}+e^{45})+\alpha e^{16}+e^{26}$&$\tau\neq0$\\
$N_{6,32}^\alpha$&$\omega_1=e^{16}+e^{23}+e^{45}$&\\
&$\omega_2=\alpha\tau e^{12}+e^{16}+e^{23}+e^{45}$&$\tau\neq0$\\
$N_{6,33}$&$\omega_1=e^{13}+e^{23}+e^{26}+e^{45}$&\\
&$\omega_2=\tau e^{12}+ e^{13}+e^{23}+e^{26}+e^{45}$&$\tau\neq0$\\
$N_{6,34}^\alpha$&$\omega_1=e^{13}+e^{15}+(\alpha+1)e^{23}+e^{26}+e^{45}$&\\
&$\omega_2=\tau e^{12}+ e^{13}+e^{15}+(\alpha+1)e^{23}+e^{26}+e^{45}$&$\tau\neq0$
\\\hline
\end{tabular}}
\end{center}

\newpage
\begin{center}
 \captionof{table}{Frobeniusian Lie algebra 
 for which the nilradical is five-dimensional.}
{\renewcommand*{\arraystretch}{1.4}
\small\begin{tabular}{l l l}
			\hline
			Algebra&Symplectic structures&Remarks\\\hline
	$\G_{6,82}^{\alpha,\lambda,\lambda_1}$&$\omega=e^{16}+\frac{1}{2}e^{24}+\frac{1}{2}e^{35}$&$\alpha=2,\lambda=\lambda_1=0$\\
	$\G_{6,82}^{\alpha,\lambda,\lambda_1}$&$\omega_1=e^{16}+\frac{1}{2}e^{24}+\frac{1}{2}e^{35}$&\\
	&$\omega_2=e^{16}+\frac{1}{2}e^{24}+\frac{1}{2}e^{35}+e^{56}$&$\alpha=2, \lambda=0, \lambda_1=1$
	\\
	$\G_{6,82}^{\alpha,\lambda,\lambda_1}$&$\omega_1=e^{16}+\frac{1}{2}e^{24}+\frac{1}{2}e^{35}$&\\
	&$\omega_2=e^{16}+\frac{1}{2}e^{24}+\frac{1}{2}e^{35}+e^{45}$&$\alpha=2,\lambda=0,\lambda_1=2$\\
	$\G_{6,82}^{\alpha,\lambda,\lambda_1}$&$\omega_1=e^{16}+\frac{1}{2}e^{24}+\frac{1}{2}e^{35}$&\\	
	&$\omega_2=e^{16}+\frac{1}{2}e^{24}+e^{25}+\frac{1}{2}e^{35}$&$\alpha=2,\lambda=\lambda_1-2>0$
	\\
	$\G_{6,82}^{\alpha,\lambda,\lambda_1}$&$\omega_1=e^{16}+\frac{1}{2}e^{24}+\frac{1}{2}e^{35}$&\\	
	&$\omega_2=e^{16}+\frac{1}{2}e^{24}+\frac{1}{2}e^{35}+e^{45}$&$\alpha=2,\lambda=-\lambda_1+2$, $1<\lambda_1<2$\\
	$\G_{6,82}^{\alpha,\lambda,\lambda_1}$&$\omega_1=e^{16}+\frac{1}{2}e^{24}+\frac{1}{2}e^{35}$\\
	&$\omega_2=e^{16}+\frac{1}{2}e^{24}+\frac{1}{2}e^{35}+e^{45}$&\\
	&$\omega_3=e^{16}+\frac{1}{2}e^{24}+\frac{1}{2}e^{35}+e^{46}$&\\
	&$\omega_4=e^{16}+\frac{1}{2}e^{24}+\frac{1}{2}e^{35}+e^{45}+e^{46}$&$\alpha=2,\lambda=\lambda_1=1$\\
$\G_{6,82}^{\alpha,\lambda,\lambda_1}$&$\omega_1=e^{16}+\frac{1}{2} e^{24}+\frac{1}{2} e^{35}$&\\
&$\omega_2=e^{16}+\frac{1}{2} e^{24}+\frac{1}{2} e^{35}+e^{46}$&\\		
&$\omega_3=e^{16}+\frac{1}{2} e^{24}+e^{25}+\frac{1}{2} e^{35}$&\\
&$\omega_4=e^{16}+\frac{1}{2} e^{24}+e^{25}+\frac{1}{2} e^{35}+e^{46}$&$\alpha=2,\lambda=1,\lambda_1=3$
			\\
	$\G_{6,82}^{\alpha,\lambda,\lambda_1}$&$\omega_1=e^{16}+\frac{1}{2}e^{24}+\frac{1}{2}e^{35}$&\\
	&$\omega_2=e^{16}+\frac{1}{2}e^{24}+\frac{1}{2}e^{35}+e^{46}$&$\alpha=2, \lambda=1, \lambda_1>1,\lambda_1\neq 2,3$		
			\\
	$\G_{6,82}^{\alpha,\lambda,\lambda_1}$		&$\omega_1=e^{16}+\frac{1}{2}e^{24}+\frac{1}{2}e^{35}$&\\
	&$\omega_2=e^{16}+\frac{1}{2}e^{24}+\frac{1}{2}e^{35}+e^{56}$&$\alpha=2, 0<\lambda<1, \lambda_1=1$
			\\
$\G_{6,82}^{\alpha,\lambda,\lambda_1}$&$\omega=e^{16}+\frac{1}{2}e^{24}+\frac{1}{2}e^{35}$&$\alpha=2, \lambda\neq\pm(\lambda_1-2), 0\leq\lambda\leq\lambda_1$\\
	&& $(\lambda,\lambda_1)\neq \text{values previously obtained}$
	\\				
			$\G_{6,83}^{\lambda,\alpha}$&$\omega_1=e^{16}+\frac{1}{2}e^{24}+\frac{1}{2}e^{35}$&
			\\&$\omega_2=e^{16}+\frac{1}{2}e^{24}+\frac{1}{2}e^{35}+e^{46}$&
			\\&$\omega_3^\pm=e^{16}+\frac{1}{2}e^{24}+\frac{1}{2}e^{35}\pm e^{45}$&
			\\&$\omega_4=e^{16}+\frac{1}{2}e^{24}+\frac{1}{2}e^{35}\pm e^{45}+\tau e^{46}$&$\lambda=1,\alpha=2$, $\tau\neq0$
			\\
			$\G_{6,83}^{\lambda,\alpha}$&$\omega=e^{16}+\frac{1}{2}e^{24}+\frac{1}{2}e^{35}$&$\lambda>0$, $\lambda\neq,1,\alpha=2$
			\\
			$\G_{6,85}^{\lambda=1}$&$\omega_1^\pm
			=\pm(e^{16}+\frac{1}{2}e^{24}+\frac{1}{2}e^{35})$&\\
			&$\omega_2^\pm
			=\pm(e^{16}+\frac{1}{2}e^{24}+\frac{1}{2}e^{35})+e^{46}$&
			\\$\G_{6,85}^{\lambda=2}$&$\omega_1^\pm
			=\pm(e^{16}+\frac{1}{2}e^{24}+\frac{1}{2}e^{35})$&\\
			&$\omega_2^\pm
			=\pm(e^{16}+\frac{1}{2}e^{24}+\frac{1}{2}e^{35})+e^{45}$&
			\\$\G_{6,85}^{\lambda}$&$\omega^\pm=\pm(e^{16}+\frac{1}{2}e^{24}+\frac{1}{2}e^{35})$&$\lambda\geq0$, $\lambda\neq1,2$
			\\\hline
	\end{tabular}}
\end{center}

\begin{center}
{\renewcommand*{\arraystretch}{1.4}
\small\begin{tabular}{l l l}
			\hline
		Algebra& Symplectic structures&Remarks \\
			\hline
			$\G_{6,86}$&$\omega=e^{16}+\frac{1}{2}e^{24}+\frac{1}{2}e^{35}$&
			\\
			$\G_{6,87}$&$\omega^\pm=\pm(e^{16}+\frac{1}{2}e^{24}+\frac{1}{2}e^{35})$&
			\\
			$\G_{6,88}^{\mu_0,\alpha,\nu_0}$&$\omega_1=e^{16}-\frac{1}{2\mu_0}e^{24}-\frac{1}{2\mu_0}e^{35}$&\\
			&$\omega_2^{\pm}=e^{16}\pm e^{23}-\frac{1}{2\mu_0}e^{24}-\frac{1}{2\mu_0}e^{35}$&$\mu_0>0,\alpha=-2\mu_0, \nu_0=1$
			\\
			$\G_{6,88}^{\mu_0,\alpha,\nu_0}$&$\omega_1=e^{16}+\frac{1}{2\mu_0}e^{24}+\frac{1}{2\mu_0}e^{35}$&\\
			&$\omega_2^\pm=e^{16}+\frac{1}{2\mu_0}e^{24}+\frac{1}{2\mu_0}e^{35}\pm e^{45}$&$\mu_0>0,\alpha=2\mu_0, \nu_0=1$
			\\
		$\G_{6,88}^{\mu_0,\alpha,\nu_0}$&$\omega=e^{16}+\frac{1}{\alpha}e^{24}+\frac{1}{\alpha}e^{35}$&$\mu_0>0,\alpha\neq\pm 2\mu_0,\alpha\neq0, \nu_0=1$	
			\\
			$\G_{6,92}^{\alpha,\nu_0,\mu_0}$&$\omega=e^{16}+\frac{1}{2}e^{24}+\frac{1}{2}e^{35}$&$\alpha=2$, $0<\nu_0\leq\mu_0$ or $\nu_0=\mu_0=0$\\
			$\G_{6,92}^{\prime\prime}$&$\omega=e^{14}+e^{25}+e^{36}$&\\
	$\G_{6,92}^{\prime\prime\prime}$&$\omega=e^{14}+e^{25}+e^{36}$&		
			
			\\
	$\G_{6,94}^{\lambda=0}$&$\omega_1=e^{16}+\frac{1}{2} e^{25}+\frac{1}{2} e^{34}$&		
			\\
			&$\omega_2^\pm=e^{16}+\frac{1}{2} e^{25}+\frac{1}{2} e^{34}\pm e^{36}$&
			\\
$\G_{6,94}^{\lambda=-3}$	&$\omega_1=e^{16}-e^{25}-e^{34}$&\\
		&$\omega_2=-e^{15}-e^{24}-2e^{36}$&\\
			$\G_{6,94}^{\lambda=-\frac{1}{2}}$&$\omega_1=e^{16}+\frac{2}{3}e^{25}+\frac{2}{3}e^{34}$&\\
			&$\omega_2=e^{16}+e^{23}+\frac{2}{3}e^{25}+\frac{2}{3}e^{34}$&\\
			$\G_{6,94}^{\lambda}$&$\omega=e^{16}+\frac{1}{2+\lambda} e^{25}+\frac{1}{2+\lambda}e^{34}$&$\lambda\neq-\frac{1}{2},-3,-2,0$
			
			\\
			$\G_{6,95}$&$\omega_1^\pm
			=\pm(e^{16}+\frac{1}{2}e^{25}+\frac{1}{2}e^{34})$&\\
			&$\omega_2^\pm
			=\pm(e^{16}+\frac{1}{2}e^{25}+\frac{1}{2}e^{34}+\tau e^{36})$&$\tau\neq0$
			\\
			$\G_{6,96}$&$\omega=e^{16}+\frac{1}{3}e^{25}+\frac{1}{3}e^{34}$&		\\
			$\G_{6,97}$&$\omega^{\pm}=\pm(e^{16}+\frac{1}{4}e^{25}+\frac{1}{4}e^{34})$&\\	$\G_{6,98}^{h_1=0}$&$\omega_1=e^{16}+e^{25}+e^{34}$\\
			&$\omega_2=e^{16}+e^{25}+e^{34}+ e^{56}$&\\
			&$\omega_3^\pm=e^{16}+e^{25}+e^{34}\pm e^{46}$&\\
			&$\omega_4=e^{16}+e^{25}+e^{34}+e^{45}+\tau_1 e^{56}$&\\
			&$\omega_5=e^{16}+e^{25}+e^{34}+ e^{45}+\tau_2 e^{46}$&$\tau_1\in\R$, $\tau_2\neq0$\\
$\G_{6,98}^{h_1\neq0}$&$\omega=e^{16}+e^{25}+e^{34}-h_1 e^{35}+\tau_1 e^{45}+\tau_2 e^{46}$,&$\tau_1,\tau_2\in\R,~h_1\in\R^\ast$\\
$\G_{6,99}$&$\omega=e^{16}+\frac{1}{5}e^{25}+\frac{1}{5}e^{34}$&
\\\hline
\end{tabular}}
\end{center}

\end{theo}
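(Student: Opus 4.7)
The strategy is to proceed in three stages: (i) identify which six-dimensional solvable real Lie algebras are Frobeniusian; (ii) classify exact symplectic forms up to symplectomorphism on each such algebra; (iii) enlarge the classification to arbitrary (possibly non-exact) symplectic forms using $H^2(\G,\R)$. I start from the known classification of six-dimensional solvable real Lie algebras organized by nilradical dimension ($4$ or $5$, matching Tables 1 and 2), in the Mubarakzyanov--Turkowski--Shabanskaya notation used in the statement. For fixed $\G$, a $1$-form $\alpha\in\G^*$ gives the exact $2$-form $d\alpha(x,y)=-\alpha([x,y])$, which is nondegenerate precisely when the Frobenius form $B_\alpha(x,y)=\alpha([x,y])$ has trivial radical; so $\G$ is Frobeniusian iff the map $\alpha\mapsto d\alpha$ from $\G^*\to\Lambda^2\G^*$ hits a nondegenerate element, a single Pfaffian non-vanishing condition on the coefficients of $\alpha$. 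Running this test on every algebra in the two reference lists pins down exactly the algebras appearing in Tables 1 and 2.

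\textbf{Step 1 (structural reduction via Lagrangian ideals).} For the bulk of the algebras listed, the canonical exact form admits a Lagrangian ideal $\mathfrak{j}\subset\G$, typically sitting inside the nilradical. The quotient $\h=\G/\mathfrak{j}$ then inherits a flat torsion-free connection, and, as recalled in the introduction (cf.~\cite{BV}), the pair $(\G,\om)$ is recovered from the triple $(\h,\nabla,\beta)$ with $\beta$ an extension cocycle valued in $\mathfrak{j}^*\simeq\h^*$. Enumerating Frobeniusian extensions thus reduces to enumerating three-dimensional flat Lie algebras together with admissible extension data, which is tractable using the classification of three-dimensional \emph{LSAs} from \cite{And}. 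This step produces most rows of Tables 1 and 2, including the parametric families indexed by $\alpha,\beta,\lambda,\lambda_1$.

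\textbf{Step 2 (algebras without Lagrangian ideals).} For the residual algebras in the statement no Lagrangian ideal exists; I treat these directly. For each $\G$ I compute the full space $Z^2(\G)$ of closed $2$-forms from the structure constants, fix an exact representative $\om_{\mathrm{ex}}$, and write any symplectic form as $\om=\om_{\mathrm{ex}}+\eta$ with $\eta$ closed and nondegeneracy imposed via $\om^3\neq0$. I then compute $\mathrm{Aut}(\G)$ in matrix form and let it act on this affine parameter space; extracting orbit representatives produces the listed $\om_i^{\pm}$, with the parametric ranges in the ``Remarks'' columns describing the corresponding slice of $\mathrm{Aut}(\G)$-orbits.

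\textbf{Step 3 (completeness and non-redundancy).} Two tasks remain: exhibiting, for every listed pair $(\G,\om)$, a $1$-form $\alpha$ with $d\alpha$ equal to its exact part (routine), and, conversely, proving that no two listed pairs are symplectomorphic. Since distinct algebras in the Mubarakzyanov--Turkowski list are known to be non-isomorphic, the only nontrivial separation is \emph{within} a fixed $\G$, and this is the \emph{main obstacle}. I resolve it by combining the cohomology class $[\om]\in H^2(\G,\R)$ (to separate exact from non-exact forms, i.e.\ $\om_1$ from $\om_{\geq 2}$) with geometric invariants attached to canonical subspaces: the restriction of $\om$ to the nilradical, to the derived ideal, to the center, and to the kernels of $\ad_x$ for distinguished $x$. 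Cases where these invariants agree but the forms could still be equivalent are ruled out by direct parametric matching against $\mathrm{Aut}(\G)$; the inequality constraints in the ``Remarks'' column (for instance $\tau\tau'\neq0$, $\lambda\neq\pm(\lambda_1-2)$, or $\mu_0>0$) record precisely the chambers on which this last equivalence test becomes trivial, giving the claimed complete and non-redundant list.
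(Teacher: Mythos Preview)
Your broad outline---split into ``has a Lagrangian ideal'' versus ``does not'', then use Lagrangian extension theory for the first family and direct $\mathrm{Aut}(\G)$-orbit computation for the second---matches the paper's architecture. However, two structural ingredients are missing, and without them Step~1 and Step~3 do not actually close.

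First, you reduce to ``three-dimensional flat Lie algebras together with admissible extension data'' and propose to read these off from a general LSA classification. But the quotient $\h=\G/\mathfrak{j}$ of a \emph{Frobeniusian} symplectic Lie algebra by a Lagrangian ideal is not an arbitrary flat Lie algebra: the induced flat connection on $\h$ must have a \emph{right-identity element}, and the extension cocycle must satisfy a compatibility with that element (this is exactly Proposition~\ref{exactLagrang} in the paper). This is what cuts the infinite zoo of three-dimensional LSAs down to the finite list of Proposition~\ref{Pr 3flat}, and without it you have no control over which extensions are Frobeniusian.

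Second, and more seriously, your plan classifies \emph{exact} forms via Lagrangian extension and then proposes to ``enlarge to arbitrary symplectic forms using $H^2(\G,\R)$'' by writing $\omega=\omega_{\mathrm{ex}}+\eta$ and quotienting by $\mathrm{Aut}(\G)$. This does not work: symplectomorphism classes are not parametrized by $H^2(\G,\R)$ modulo automorphisms, and the action of $\mathrm{Aut}(\G)$ does not respect the splitting into exact and cohomological parts. The paper's route is different and is the actual content of Theorem~\ref{Principale}: one checks, algebra by algebra, that on every Frobeniusian $\G$ whose exact form has a Lagrangian ideal, \emph{every} symplectic form (exact or not) also has a Lagrangian ideal. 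Once that is known, the Baues--Cort\'es correspondence (Proposition~\ref{1-1 corress}) applies to \emph{all} symplectic forms simultaneously, and the symplectomorphism classes are parametrized by the Lagrangian extension cohomology $H^2_{L,\rho}(\h,\h^\ast)$, computed in Table~\ref{tableofcoho}. Your Step~3 invariants (restrictions to nilradical, derived ideal, etc.) are neither sufficient nor necessary once this cohomological bookkeeping is in place; the ``Remarks'' column records $H^2_{L,\rho}$-classes, not ad hoc geometric separations.
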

In addition to the previous identified indecomposable Frobeniusian Lie algebras with Lagrangian ideals  in dimension $6$, two families of Frobenuisian Lie algebras can be distinguished. We refer to Proposition \ref{Principale2} for Frobeniusian indecomposable Lie algebras without Lagrangian  ideals, and Corollary \ref{decompoFrob} for Frobeniusian decomposable Lie algebras.

Our main result (Theorem \ref{Principtheo}) consists of three steps: 

We can distinguish two types of six-dimensional Fobeniusian Lie algebras: In the first type of Frobeniusian Lie algebras ($47$ algebras), the exact form has a Lagrangian ideal, whereas in the second, the exact form doesn't have a Lagrangian ideal ($8$ algebras). 
 
\begin{enumerate} 
\item For the first type, we present a scheme to classify the isomorphism classes of six-dimensional Frobeniusian  Lie algebras with Lagrangian ideals as follows: Let $(\G,\mathrm{d}\mu)$ be a six-dimensional Frobeniusian  Lie algebra, suppose that $\mathrm{d}\mu$ has a Lagrangian ideal $\mathfrak{j}$. It is known
that in the presence of an isotropic ideal $\mathfrak{j}$, the symplectic structure on $\G$ induces a flat torsion-free connection  on $\G/\mathfrak{j}$. As a consequence, $(\G/\mathfrak{j},\nabla^{\mathrm{d}\mu})$ is a flat Lie algebra with a right-identity element $``e"$. In fact, $\G$ is nothing more than a Lagrangian extension of a $3$-dimensional flat Lie algebra with a right-identity element. Then,
\begin{enumerate}
\item We classify all  $3$-dimensional flat Lie algebras  that possess a right-identity element (see Proposition \ref{Pr 3flat}).
\item The next step involves reconstructing the Frobeniusian Lie algebras, characterized by an exact form that includes a Lagrangian ideal (see Proposition \ref{Prclassifi}).
\item The classification of symplectic forms on Frobeniusian Lie algebras: When given a six-dimensional Frobeniusian Lie algebra $\tilde{\G}=(\G,\mathrm{d}\mu,\mathfrak{j})$ whose exact form has a Lagrangian ideal, every choice of the symplectic form $\omega$ over the Lie algebra $\tilde{\G}$ has a Lagrangian ideal (see Theorem \ref{Principale}). As a consequence, there is a one-to-one correspondence between the classes of isomorphisms
of symplectic Lie algebras with the same property of $\tilde{\G}$  and the triples $(\h,\nabla,[\alpha])$, where $[\alpha]\in H^2_{L,\rho}(\h,\h^\ast)$. Detailed explanations of this result are provided in Section \ref{Classifisection}, while additional details are provided in Proposition \ref{Prclassifi}.
\end{enumerate}

\item For the second type, the method essentially involves calculating symplectic forms up to symplectomorphism. As a consequence, any six-dimensional  Frobeniusian Lie algebra has one or two symplectic forms (non-symplectomorphically isomorphic); the first is exact, while the second is not.  In Proposition \ref{Principale2}, we provide a classification of Frobeniusian indecomposable Lie algebras without Lagrangian ideal by adding one Frobeniusian decomposable Lie algebra (Corollary \ref{decompoFrob}, algebra $\mathfrak{d}^\prime_{4,\delta}\oplus\mathfrak{aff}(1,\R)$) without Lagrangian ideal.
\item In \cite{Muba1} Mubarakzyanov classified real six-dimensional indecomposable solvable Lie algebras with five-dimensional nilradicals. There are several problems with this paper, which is frequently cited (see, \cite{Tur2}, \cite{Shaba}). A number of improvements have been made to Mubarakzyanov's paper by Shabanskaya and Thompson \cite{Shaba}. This is why we propose reconstructing the Frobeniusian Lie algebras with Lagrangian ideals to suit the Mubarakzyanov classification also of Turkowski \cite{Tur3} (see Tables \ref{isodecompo}, \ref{isoTur} and \ref{isoMuba}). In addition, we aim to separate possible Frobeniusian Lie algebras in accordance with their symplectic structures.
\end{enumerate}
The paper is structured as follows. Section \ref{sec1} introduces some fundamental properties of symplectic Lagrangian reduction and provides our first results in this regard.   In Section \ref{sec3}, we give a complete classification of flat Lie algebras with a right-identity element. In Section \ref{sec4},  we classify all symplectic structures on six-dimensional Frobeniusian Lie algebras without Lagrangian ideal, up to symplectomorphism. In Section \ref{Classifisection}, we give a complete classification of six-dimensional Frobeniusian Lie algebras with Lagrangian ideal, as well as their symplectic structures, up to symplectomorphism.  Sections \ref{AppenA} and \ref{AppenB} serve as appendices where we provide detailed computations necessary for the proofs of   Theorem \ref{Principale}, Proposition \ref{Prclassifi} (Appendix \ref{AppenA}). Appendix \ref{AppenB} is dedicated to determining the accurate list of six-dimensional Frobeniusian Lie algebras, as initially presented in \cite{Tur3},  \cite{Muba1} and \cite{Com2}. This list is rectified in \cite{Shaba} through a comparison with our updated list.
\section{Lagrangian extensions of flat Lie algebras}\label{sec1}
As a prelude to the main point of this paper, all the preliminaries are presented in this section. We briefly review some of the standard facts about Lagrangian extensions of flat Lie algebras and their relation to Lagrangian reductions \cite{BV}. An important result of Lagrangian symplectic extension theory is that the isomorphism classes of Lagrangian symplectic extensions of a flat Lie algebra $(\h,\nabla)$ are parametrized by a suitable restricted cohomology group
$H^2_{L,\nabla}(\h, \h^\ast)$.

\textit{Symplectic Lagrangian reduction.} Let $(\G,\omega)$ be a symplectic Lie algebra. An ideal $\mathfrak{j}$ of $(\G,\omega)$ is called \textit{isotropic} if $\mathfrak{j}\subset\mathfrak{j}^{\perp_{\omega}}$ with
\begin{equation*}
\mathfrak{j}^{\perp_{\omega}}=\big\{x\in\G~|~\omega(x,y)=0,~\forall y\in\mathfrak{j}\big\}.
\end{equation*}
If the orthogonal $\mathfrak{j}^{\perp_{\omega}}$ is an ideal in $\G$ we call $\mathfrak{j}$ a \textit{normal isotropic} ideal. If $\mathfrak{j}$ is a
maximal isotropic subspace $\mathfrak{j}$ is called a \textit{Lagrangian ideal}. Let $\mathfrak{j}$ a normal ideal of $(\G,\omega)$ and let $\h=\G/\mathfrak{j}$
denote the associated quotient Lie algebra. From $\omega$ we obtain a non-degenerate
bilinear pairing $\omega_\h$ between $\h$ and $\mathfrak{j}$, by declaring
\begin{equation*}
\omega_\h(\overline{x},u)=
\omega(x,u),~\forall \overline{x}\in\h, u\in\mathfrak{j},
\end{equation*}
where, for $x\in\G$, $\overline{x}$ denotes its class in $\h$. 
\begin{pr}\textsc{\cite{BV}}\label{Pr1isomo}
The homomorphism $\om_\h\in \mathrm{Hom}(\h,\mathfrak{j}^*)$,
$\overline{x}\longmapsto\om_\h(\overline{x},.) $, is an isomorphism and $\h$ carries a flat torsion-free connection
 defined by the equation
\begin{equation}\label{Pr1connex}\om_\h(\overline{\nabla}_{\overline{x}} \overline{y},u)=-\om(y,[{x},u]), ~~\forall
\overline{x},\,\overline{y}\in\h,\;u\in\mathfrak{j}.\end{equation}
\end{pr}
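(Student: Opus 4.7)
The plan is to prove the proposition in four stages, each depending on the hypotheses that $\mathfrak{j}$ is an \emph{isotropic ideal} (Lagrangian in fact) and that $\omega$ is a symplectic $2$-cocycle.

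\textbf{Well-definedness of $\omega_\h$ and the isomorphism claim.} First I would check that $\omega_\h(\overline{x},u):=\omega(x,u)$ does not depend on the representative $x$: if $x'=x+j$ with $j\in\mathfrak{j}$, then $\omega(j,u)=0$ by isotropy of $\mathfrak{j}$, so $\omega(x',u)=\omega(x,u)$. For the isomorphism, I would show injectivity of $\overline{x}\mapsto\omega_\h(\overline{x},\cdot)$: if $\omega(x,u)=0$ for every $u\in\mathfrak{j}$, then $x\in\mathfrak{j}^{\perp_\om}=\mathfrak{j}$ (since $\mathfrak{j}$ is Lagrangian), hence $\overline{x}=0$. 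Since $\dim\h=\dim\G-\dim\mathfrak{j}=\dim\mathfrak{j}=\dim\mathfrak{j}^\ast$, injectivity suffices.

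\textbf{Well-definedness of $\overline{\nabla}$.} I would show that the right-hand side $-\om(y,[x,u])$ descends to a tri-linear form on $\h\times\h\times\mathfrak{j}$. Independence of the representative $y$ is immediate: $[x,u]\in\mathfrak{j}$ because $\mathfrak{j}$ is an ideal, and then isotropy kills the extra term. Independence in $x$ needs the $2$-cocycle relation: replacing $x$ by $x+j$ produces the extra term $\om(y,[j,u])$, and applying \eqref{cocy} to the triple $(y,j,u)$ yields
\[
\om(y,[j,u])=\om([y,j],u)+\om([u,y],j)=0,
\]
since $[y,j],[u,y]\in\mathfrak{j}$ and $u,j\in\mathfrak{j}$ so both pairings vanish by isotropy. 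Composing with the isomorphism $\omega_\h$ then defines $\overline{\nabla}_{\overline{x}}\overline{y}\in\h$ uniquely.

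\textbf{Torsion-free property.} The cocycle identity applied to $(x,y,u)$ gives $\om([x,y],u)=\om(x,[y,u])-\om(y,[x,u])$. Pairing with $u$ through $\om_\h$:
\[
\om_\h(\overline{\nabla}_{\overline{x}}\overline{y}-\overline{\nabla}_{\overline{y}}\overline{x},u)=-\om(y,[x,u])+\om(x,[y,u])=\om([x,y],u)=\om_\h(\overline{[x,y]},u),
\]
and since $\om_\h$ is injective this forces $\overline{\nabla}_{\overline{x}}\overline{y}-\overline{\nabla}_{\overline{y}}\overline{x}=[\overline{x},\overline{y}]$.

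\textbf{Flatness.} This is where I expect the only real computational work. Using that $[x,u]\in\mathfrak{j}$ so $\om_\h(\overline{w},[x,u])=\om(w,[x,u])$ for any representative, iterating the defining formula gives
\[
\om_\h(\overline{\nabla}_{\overline{x}}\overline{\nabla}_{\overline{y}}\overline{z},u)=\om(z,[y,[x,u]]),\quad \om_\h(\overline{\nabla}_{[\overline{x},\overline{y}]}\overline{z},u)=-\om(z,[[x,y],u]),
\]
with the analogous expression swapping $x,y$. Summing the three contributions to $R(\overline{x},\overline{y})\overline{z}$ produces $\om\bigl(z,\,[y,[x,u]]-[x,[y,u]]+[[x,y],u]\bigr)$, which vanishes by the Jacobi identity in $\G$. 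Non-degeneracy of $\om_\h$ again gives $R\equiv 0$. The main obstacle is the bookkeeping in this last step, but once one notes that $[x,u]\in\mathfrak{j}$ allows identifying $\om$ with $\om_\h$ at each stage, the reduction to Jacobi is clean.
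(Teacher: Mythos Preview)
Your proof is correct. The paper does not supply its own proof of this proposition; it is quoted from \cite{BV} as a known result, so there is nothing to compare against directly. Your four-stage argument (well-definedness and bijectivity of $\omega_\h$, well-definedness of $\overline{\nabla}$, torsion-freeness via the cocycle identity on $(x,y,u)$, and flatness via Jacobi) is the standard route and is carried out cleanly. One minor simplification you could note in Stage~2: a Lagrangian ideal in a symplectic Lie algebra is automatically abelian (apply the cocycle identity to $(j,u,z)$ for arbitrary $z\in\G$ and use non-degeneracy of $\omega$), so in fact $[j,u]=0$ outright and the independence in $x$ is immediate; but your cocycle argument works just as well without invoking this.
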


Let $(\h,\nabla)$ be a flat Lie algebra, that is, a Lie algebra endowed with a flat
torsion-free connection $\nabla$. Since $\nabla$ is a flat connection, the association $x \mapsto \nabla_x$ defines a representation $\h  \rightarrow \mathrm{End}(\h)$. We denote by $\rho^\nabla : \h \rightarrow \mathrm{End}(\h^\ast)$ the dual representation,
which satisfies
\begin{equation}\label{reprenew}
\rho^\nabla(x)\xi:=-\nabla^\ast_x\xi=-\xi\circ\nabla_x,~~x\in\h, \xi\in\h^\ast.
\end{equation}
Define $Z^2_\nabla(\h,\h^\ast)=Z^2_{\rho^\nabla}(\h,\h^\ast)$.  Every cocycle $\alpha\in Z^2_\nabla (\h, \h^\ast)$ thus gives rise to
a Lie algebra extension
\begin{center}
$0\longrightarrow\h^\ast\longrightarrow
\G_{\nabla,\alpha}\longrightarrow\h\longrightarrow 0$,
\end{center}
where the non-zero Lie brackets $[~ , ~]_{\G}$ for $\G = \G_{\nabla,\alpha}$ are defined on the vector
space direct sum $\G = \h \oplus \h^\ast$ by the formulas
\begin{equation}\label{Liebracket1}
[x,y]_\G = [x,y]_\h+\alpha(x,y),~~\text{for~all}~~x,y\in\h,
\end{equation}
\begin{equation}\label{Liebracket2}
[x,\xi]_\G =\rho(x)\xi, ~~\text{for~all}~x\in\h, \xi\in\h^\ast.~~~~~~
\end{equation}
We let $\omega_0$ be the non-degenerate alternating two-form on $\G$, which is defined
by the dual pairing of $\h$ and $\h^\ast$. i.e.,
\begin{equation}\label{formlagra}
\begin{cases}
	\omega_0(x,\xi)=\xi(x),\quad \text{ for all}~ x,\in \h, \xi\in\h^\ast,\\
	\omega_0(\xi_1,\xi_2)=0,\quad \text{ for all}~ \xi_1,\xi_2\in \h^\ast,\\
	\omega_0(x,y)=0,\quad \text{ for all}~ x,y\in \h.
	\end{cases}
\end{equation}

\begin{pr}
The form $\omega_0$ is symplectic for the Lie-algebra $\G_{\nabla,\alpha}$, if and only if
\begin{equation}\label{symexten}
\alpha(x,y)(z)+\alpha(y,z)(x)+\alpha(z,x)(y)=0,
\end{equation}
for all $x,y,z\in\h$.
\end{pr}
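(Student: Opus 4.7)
Non-degeneracy of $\omega_0$ is built into its definition: the pairing between $\h$ and $\h^\ast$ is non-degenerate and $\omega_0$ vanishes on both $\h\times\h$ and $\h^\ast\times\h^\ast$, so a vector $X=x+\xi\in\G$ satisfies $\omega_0(X,\cdot)\equiv 0$ only if $\xi=0$ (test against $\h$) and $x=0$ (test against $\h^\ast$). Consequently, the only content of the proposition is the equivalence of closedness of $\omega_0$ with condition (\ref{symexten}). The plan is therefore to compute the Chevalley--Eilenberg differential $\mathrm{d}\omega_0(X,Y,Z)$ case by case according to how many of the three arguments lie in $\h$ and how many in $\h^\ast$.

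The four cases to handle are: (i) three arguments in $\h^\ast$; (ii) two in $\h^\ast$, one in $\h$; (iii) one in $\h^\ast$, two in $\h$; (iv) three in $\h$. Cases (i) and (ii) vanish trivially: by (\ref{Liebracket2}) the only non-zero brackets involving $\h^\ast$ land back in $\h^\ast$, and $\omega_0$ is identically zero on $\h^\ast\times\h^\ast$, so every term in $\mathrm{d}\omega_0$ is zero. In case (iii), with $x,y\in\h$ and $\xi\in\h^\ast$, I would substitute (\ref{Liebracket1}), (\ref{Liebracket2}) and the definition (\ref{reprenew}) of $\rho^\nabla$ to rewrite the three cyclic contributions as
\begin{equation*}
\mathrm{d}\omega_0(x,y,\xi)=\xi\bigl([x,y]_\h\bigr)-\xi\bigl(\nabla_x y\bigr)+\xi\bigl(\nabla_y x\bigr),
\end{equation*}
which vanishes identically because $\nabla$ is torsion-free. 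This is the step where the hypothesis "torsion-free" on the connection $\nabla$ is actually used; it is the reason the definition of $\omega_0$ is tuned to the flat structure on $\h$.

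Case (iv) is where condition (\ref{symexten}) emerges. For $x,y,z\in\h$, the $\h$-components $[x,y]_\h,[y,z]_\h,[z,x]_\h\in\h$ contribute nothing because $\omega_0$ vanishes on $\h\times\h$; only the $\h^\ast$-components $\alpha(x,y),\alpha(y,z),\alpha(z,x)$ survive, and the dual pairing gives
\begin{equation*}
\mathrm{d}\omega_0(x,y,z)=-\bigl(\alpha(x,y)(z)+\alpha(y,z)(x)+\alpha(z,x)(y)\bigr).
\end{equation*}
Hence $\mathrm{d}\omega_0=0$ on $\h\times\h\times\h$ is equivalent to (\ref{symexten}), and combined with the automatic vanishing in cases (i)--(iii) this gives the equivalence claimed. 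There is no real obstacle: the computation is a bookkeeping exercise, and the only subtle point is ensuring the signs in case (iii) cancel correctly via the torsion-free identity $\nabla_x y-\nabla_y x=[x,y]_\h$.
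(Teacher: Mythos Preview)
Your proof is correct and complete. The paper does not actually supply a proof of this proposition; it is stated without argument, as a standard fact imported from the reference \cite{BV}. Your case-by-case computation of $\mathrm{d}\omega_0$ according to the $\h$/$\h^\ast$ type of the arguments is exactly the natural way to establish it, and your identification of case (iii) as the place where torsion-freeness of $\nabla$ is used, and case (iv) as the sole source of condition (\ref{symexten}), is accurate.
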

The condition $(\ref{symexten})$, known as the ``Bianchi identity".
\begin{Def}
 We call the symplectic Lie algebra $(\G_{\nabla,\alpha},\omega_0)$ the Lagrangian extension of the
flat Lie algebra $(\h, \nabla)$ with respect to $\alpha$.
\end{Def}

\begin{Le}\label{isomor1}
Two Lagrangian extensions over $\h$ with the same class in $H^2_\rho(\h,\h^\ast)$ are isomorphic. 
\end{Le}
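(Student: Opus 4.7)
Because $[\alpha]=[\alpha']$ in $H^2_\rho(\h,\h^\ast)$, there exists a $1$-cochain $\phi\in C^1(\h,\h^\ast)=\mathrm{Hom}(\h,\h^\ast)$ such that $\alpha-\alpha'=\partial_\rho\phi$, that is,
\begin{equation*}
(\alpha-\alpha')(x,y)\;=\;\rho(x)\phi(y)-\rho(y)\phi(x)-\phi([x,y]_\h)
\quad\text{for all }x,y\in\h.
\end{equation*}
The underlying vector spaces of $\G_{\nabla,\alpha}$ and $\G_{\nabla,\alpha'}$ are both $\h\oplus\h^\ast$, so one can define a linear map
\begin{equation*}
\Phi:\G_{\nabla,\alpha}\lr\G_{\nabla,\alpha'},\qquad \Phi(x+\xi)=x+\phi(x)+\xi,\quad x\in\h,\ \xi\in\h^\ast.
\end{equation*}
This $\Phi$ is obviously a linear isomorphism (inverse $y+\eta\mapsto y+(\eta-\phi(y))$) which fixes the subspace $\h^\ast$ pointwise and acts as the identity modulo $\h^\ast$ on the quotient $\h$; it is the standard candidate isomorphism between two abelian extensions with cohomologous cocycles.

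The remaining task is to check that $\Phi$ intertwines the brackets $[\,,\,]_\alpha$ and $[\,,\,]_{\alpha'}$ defined by (\ref{Liebracket1})--(\ref{Liebracket2}). There are three cases. For $x,y\in\h$, one computes
\begin{equation*}
\Phi([x,y]_\alpha)=[x,y]_\h+\phi([x,y]_\h)+\alpha(x,y),
\end{equation*}
\begin{equation*}
[\Phi(x),\Phi(y)]_{\alpha'}=[x,y]_\h+\alpha'(x,y)+\rho(x)\phi(y)-\rho(y)\phi(x),
\end{equation*}
and equality of the two expressions is exactly the coboundary relation $\alpha-\alpha'=\partial_\rho\phi$. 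For $x\in\h$ and $\xi\in\h^\ast$, since $\phi(x)\in\h^\ast$ and $[\h^\ast,\h^\ast]=0$, both $\Phi([x,\xi]_\alpha)$ and $[\Phi(x),\Phi(\xi)]_{\alpha'}$ reduce to $\rho(x)\xi$. For $\xi,\eta\in\h^\ast$, both sides are zero. Hence $\Phi$ is a Lie algebra isomorphism.

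There is no real obstacle beyond keeping track of the conventions in the definition (\ref{reprenew}) of $\rho=\rho^\nabla$ and of the differential $\partial_\rho$ used in $Z^2_\rho(\h,\h^\ast)$; the result is the standard statement that abelian extensions of a Lie algebra by a module are classified up to equivalence by the corresponding second cohomology group, applied here to the module $(\h^\ast,\rho^\nabla)$. Note that the lemma asserts only isomorphism of Lie algebras, not symplectomorphism: preservation of $\omega_0$ requires in addition that $\phi\in\mathrm{Hom}(\h,\h^\ast)$ be symmetric (i.e.\ $\phi(x)(y)=\phi(y)(x)$), which is precisely the restriction built into the refined cohomology $H^2_{L,\nabla}(\h,\h^\ast)$ used later in Section \ref{Classifisection}.
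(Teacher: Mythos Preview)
Your proof is correct and follows essentially the same approach as the paper: both define the isomorphism $\Phi:\G_{\nabla,\alpha}\to\G_{\nabla,\alpha'}$ by $(x,\xi)\mapsto(x,\xi+\phi(x))$ for a $1$-cochain $\phi$ with $\alpha-\alpha'=\partial_\rho\phi$, and verify directly that $\Phi$ preserves the bracket. Your case-by-case verification is slightly more detailed than the paper's single computation with generic elements, and your closing remark on symplectomorphisms (requiring $\phi$ symmetric) correctly anticipates the role of $H^2_{L,\nabla}$ later on.
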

\begin{proof}
If $[\alpha]=[\beta]\in H^2_\rho(\h,\h^\ast)$; that is, if $\beta = \alpha -\partial\sigma$, for some $\sigma\in\mathcal{C}^1(\h,\h^\ast)$. The following map
\begin{equation}
\Phi:\G_{\nabla,\alpha}\longrightarrow\G_{\nabla,\beta},~(x,\xi)\longmapsto (x,\xi+\sigma(x))
\end{equation}
is the required isomorphism of Lie algebras.  Using $(\ref{Liebracket1})$ and $(\ref{Liebracket2})$, we have
\begin{eqnarray}\label{eq1}
&&\Phi\Big([x+\xi_1,y+\xi_2]_{\G_{\nabla,\alpha}}\Big)=[x,y]_\h+\alpha(x,y)+\rho(x)\xi_2-\rho(y)\xi_1+\sigma([x,y]_\h)
\end{eqnarray}
and
\begin{eqnarray}\label{eq2}
&&[\Phi(x+\xi_1),\Phi(y+\xi_2)]_{\G_{\nabla,\beta}}=[x,y]_\h+\beta(x,y)+\rho(x)\xi_2-\rho(y)\xi_1+\rho(x)\sigma(y)-\rho(y)\sigma(x).
\end{eqnarray}
From $(\ref{eq1})$ and $(\ref{eq2})$, we obtain
\begin{eqnarray*}
&&\Phi\Big([x+\xi_1,y+\xi_2]_{\G_{\nabla,\alpha}}\Big)-[\Phi(x+\xi_1),\Phi(y+\xi_2)]_{\G_{\nabla,\beta}}=\alpha(x,y)-\beta(x,y)-\partial\sigma(x,y)=0.
\end{eqnarray*}
\end{proof}

\begin{Def}\textsc{\cite{BV}} A strong polarization of a symplectic Lie algebra $(\G,\omega)$ is a pair $(\mathfrak{a},N)$ consisting of a Lagrangian ideal $\mathfrak{a} \subset \G$ and
a complementary Lagrangian subspace $N \subset \G$. 
 The quadruple $(\G,\omega, \mathfrak{a},N)$ is then called a strongly polarized symplectic Lie algebra. 

An isomorphism
of strongly polarized symplectic Lie algebras $(\G_1,\omega_1, \mathfrak{a}_1,N_1) \rightarrow (\G_2,\omega_2, \mathfrak{a}_2,N_2)$ is an isomorphism of symplectic Lie algebras $(\G_1,\omega_1) \rightarrow (\G_2,\omega_2)$ which maps the strong polarization $(\mathfrak{a}_1,N_1)$ to the strong polarization $(\mathfrak{a}_2,N_2)$.

\end{Def}
\subsection{Change of flat Lie algebra}  See $\cite{BV}$, Theorem $4.2.1$  for more details regarding this small paragraph. Let $(\G_{\nabla,\alpha},\omega_0)$ be the Lagrangian extension of the flat Lie algebra $(\h, \nabla)$ with respect to $\alpha$. Let $\omega$ be a symplectic form on $\G_{\nabla,\alpha}$,  chosen arbitrarily, $\mathfrak{a}$ is a Lagrangian ideal of $(\G_{\nabla,\alpha},\omega)$ and $(\B=\G_{\nabla,\alpha}/\mathfrak{a},\nabla^\omega)$ the associated quotient flat Lie algebra.

Let $(\G_{\nabla,\alpha},\omega, \mathfrak{a},N)$ be a strongly polarized symplectic Lie algebra, $(\B, \nabla^\omega)$ its associated quotient flat Lie algebra and $\pi_\mathfrak{a} : \G_{\nabla,\alpha}\longrightarrow\mathfrak{a}$ be the projection map  induced by the strong
polarization $\G =\mathfrak{a}\oplus N$. According to Proposition \ref{Pr1isomo}, the map $\Gamma : \mathfrak{a}\longrightarrow\B^\ast,~x\mapsto \omega(x,.)$, serves as the identification of $\mathfrak{a}$ with $\B^\ast$, and this identification is induced by the symplectic form $\omega$. From Equation $(\ref{Pr1connex})$, we have
\begin{eqnarray*}
\rho(x)\circ\Gamma=\Gamma\circ
\mathrm{ad}_{\B,\mathfrak{a}}(x),
\end{eqnarray*}
$\mathrm{ad}_{\B,\mathfrak{a}}(x)(a)=[\tilde{x},a]$, for all $x\in\B$, $a\in\mathfrak{a}$. Furthermore, it demonstrates that the representation $\rho$ of $\B$ on $\B^\ast$, which belongs to the flat connection $\nabla^\omega$ by $(\ref{reprenew})$, corresponds to  $\mathrm{ad}_{\B,\mathfrak{a}}$.

Let $\pi_\B : N \longrightarrow \B$ be the isomorphism of vector spaces induced by the quotient map
$\G_{\nabla,\alpha}\longrightarrow\B$. The isomorphisms 
$\Gamma$ and $\pi_\B$ combine to form an isomorphism

\begin{equation}
\pi_\B\oplus\Gamma : N\oplus\mathfrak{a} \longrightarrow \B\oplus\B^\ast.
\end{equation}

Define
\begin{equation}
\tilde{\beta} = \Gamma\circ\tilde{\alpha} \in Z_{\nabla^{\omega}}(\B, \B^\ast) 
\end{equation}
to be the push-forward of $\tilde{\alpha}$, where $\tilde{\alpha}(x,y)=\pi_\B([\tilde{x},\tilde{y}])$, for all $x,y\in\B$. The map

\begin{equation}
\pi_\B\oplus\Gamma : (\G_{\nabla,\alpha},\omega) \longrightarrow (\G_{\nabla^\omega,\tilde{\beta}},\omega_{\tilde{\beta}})
\end{equation}
defines an isomorphism of symplectic Lie algebras.

 Let $(\G_{\nabla^{\omega},\beta},\omega_\beta)$ be the Lagrangian extension of the flat Lie algebra $(\B, \nabla^{\omega})$ with respect to $\beta\in Z^2_{\nabla^{\omega}}(\B,\B^\ast)$. Together with Lemma $\ref{isomor1}$, we therefore have:

\begin{Le}\label{pont}
The map 
\begin{center}
$\Lambda:=\pi_\B\oplus\Gamma : (\G_{\nabla,\alpha},\omega) \longrightarrow (\G_{\nabla^\omega,\beta},\omega_\beta),
$ 
\end{center}
defines an isomorphism of symplectic Lie algebras if and only if $\beta$ and $\tilde{\beta}$ have the same class in $H^2_{\rho^{\nabla^\omega}}(\B,\B^\ast)$. In particular, $\G_{\nabla,\alpha}$ and $\G_{\nabla^\omega,\tilde{\beta}}$ are isomorphic.
\end{Le}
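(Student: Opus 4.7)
The plan is to reduce the claim to Lemma \ref{isomor1} combined with the push-forward construction carried out in the paragraph immediately preceding the statement. That paragraph already shows, independently of any cohomological hypothesis, that the map $\Lambda=\pi_\B\oplus\Gamma$ realizes $(\G_{\nabla,\alpha},\omega)$ as a symplectic isomorph of the specific Lagrangian extension $(\G_{\nabla^\omega,\tilde{\beta}}, \omega_{\tilde{\beta}})$ associated with the strong polarization $(\mathfrak{a},N)$. This observation alone gives the ``in particular'' clause and reduces the full equivalence to comparing the two Lagrangian extensions $(\G_{\nabla^\omega,\tilde{\beta}}, \omega_{\tilde{\beta}})$ and $(\G_{\nabla^\omega,\beta}, \omega_\beta)$ of the same flat Lie algebra $(\B,\nabla^\omega)$.

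For the implication $(\Leftarrow)$ I would assume $[\beta]=[\tilde{\beta}]$, choose $\sigma\in\mathcal{C}^1(\B,\B^\ast)$ with $\tilde{\beta}-\beta=\partial\sigma$, and apply Lemma \ref{isomor1} to obtain the Lie algebra isomorphism $\Phi:(x,\xi)\mapsto(x,\xi+\sigma(x))$ from $\G_{\nabla^\omega,\tilde{\beta}}$ onto $\G_{\nabla^\omega,\beta}$. A short direct computation using (\ref{formlagra}) yields $(\Phi^\ast\omega_\beta-\omega_{\tilde{\beta}})(x_1+\xi_1,x_2+\xi_2)=\sigma(x_2)(x_1)-\sigma(x_1)(x_2)$, so $\Phi$ is a symplectomorphism precisely when the bilinear form $(x_1,x_2)\mapsto\sigma(x_1)(x_2)$ is symmetric. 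Composing $\Phi$ with $\Lambda$ then delivers the required symplectic isomorphism onto $(\G_{\nabla^\omega,\beta},\omega_\beta)$.

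For the converse $(\Rightarrow)$, if $\Lambda$ is a symplectic isomorphism onto $(\G_{\nabla^\omega,\beta},\omega_\beta)$, I would pre-compose with the inverse of the canonical push-forward isomorphism to produce a symplectic Lie algebra isomorphism $(\G_{\nabla^\omega,\tilde{\beta}},\omega_{\tilde{\beta}})\to(\G_{\nabla^\omega,\beta},\omega_\beta)$ that preserves the Lagrangian ideal $\B^\ast$ and induces the identity map on the quotient $\B$. Any such map is forced to take the form $(x,\xi)\mapsto(x,\xi+\sigma(x))$ for some $\sigma:\B\to\B^\ast$, and writing out the condition that it intertwines the two Lie brackets reproduces the very identity $\tilde{\beta}-\beta=\partial\sigma$ derived in the proof of Lemma \ref{isomor1}, whence $[\beta]=[\tilde{\beta}]$.

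The delicate point I expect to be the main obstacle is the symmetry condition on $\sigma$ required to promote $\Phi$ from a mere Lie algebra isomorphism to an honest symplectomorphism. One must argue that, inside the given cohomology class $[\tilde{\beta}-\beta]\in H^2_{\rho^{\nabla^\omega}}(\B,\B^\ast)$, a representative coboundary $\partial\sigma$ whose associated pairing is symmetric can always be selected. This is precisely the refinement underlying the restricted cohomology $H^2_{L,\rho^{\nabla^\omega}}(\B,\B^\ast)$ used elsewhere in the paper, and verifying that such a choice is compatible with the Bianchi identity (\ref{symexten}) is the real technical content to be checked in a fully rigorous proof.
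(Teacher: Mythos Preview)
Your approach is exactly the paper's own: the entire ``proof'' in the paper is the single phrase ``Together with Lemma~\ref{isomor1}, we therefore have'', i.e.\ precisely your two-step reduction (push-forward via $\Lambda$ to $\G_{\nabla^\omega,\tilde{\beta}}$, then invoke Lemma~\ref{isomor1} to pass to $\G_{\nabla^\omega,\beta}$).

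Where you go further than the paper is in flagging the symmetry condition on $\sigma$. Your observation is correct: the map $\Phi:(x,\xi)\mapsto(x,\xi+\sigma(x))$ from Lemma~\ref{isomor1} is only a Lie algebra isomorphism, and it pulls back $\omega_\beta$ to $\omega_{\tilde{\beta}}$ if and only if $\sigma\in\mathcal{C}^1_L(\B,\B^\ast)$. The paper's statement, which uses the ordinary group $H^2_{\rho^{\nabla^\omega}}(\B,\B^\ast)$ rather than $H^2_{L,\rho^{\nabla^\omega}}(\B,\B^\ast)$, therefore does not literally yield an isomorphism \emph{of symplectic Lie algebras} from Lemma~\ref{isomor1} alone---only an isomorphism of Lie algebras. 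This is consistent with how the lemma is actually used later (e.g.\ in Proposition~\ref{LEiso}, where only ``$\G_{\nabla,\alpha}$ and $\G_{\nabla^\omega,\tilde{\beta}}$ are isomorphic'' is invoked), and with the ``In particular'' clause, which drops the symplectic qualifier. So your caution is well placed: the paper is being somewhat loose here, and a fully rigorous version would either replace $H^2_\rho$ by $H^2_{L,\rho}$ in the statement or weaken ``symplectic isomorphism'' to ``Lie algebra isomorphism'' in the conclusion.
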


A Lagrangian extension  of flat Lie algebra can generally take several isomorphic forms, i.e., $\G=\h_1\oplus\h_1^\ast \cong\h_2\oplus\h_2^\ast \cdot\cdot\cdot\cong \h_k\oplus\h_k^\ast $, and because of this, it is possible to have an isomorphim between two Lagrangian extensions over
  different flat Lie algebras, as shown in the following example.
  \begin{exem}
  Let $(\mathfrak{h}, \nabla^{\pm})$ be a three-dimensional flat Lie algebra, and let $\nabla^{\pm}$ be the connection on $\mathfrak{h}$ with non-zero products in the basis are
 \begin{equation}
 \nabla_{e_1}e_j=e_j, j=1,2,3,~\nabla_{e_2}e_1=e_2,~\nabla_{e_2}e_2= \pm e_3,~\nabla_{e_3}e_1=e_3,~\nabla_{e_3}e_2= e_2,~\nabla_{e_3}e_3= 2e_3.
 \end{equation}
 It is straightforward to verify that $(\mathfrak{h}, \nabla^{-})\ncong (\mathfrak{h}, \nabla^{+})$, $H^2_{\rho^{\nabla^{\pm}}}(\h,\h^\ast)=0$ and $\G_{\nabla^{-}}\cong \G_{\nabla^{+}}$; see Table $\ref{isoTur}$ $($algebras $\G_{6,7}$ and $\G_{6,8})$.

  \end{exem}
\begin{pr}\label{LEiso}
Let $(\G_{\nabla,\alpha},\omega)$ $($resp. $(\tilde{\G}_{\tilde{\nabla},\tilde{\alpha}},\tilde{\omega})$ $)$ be the Lagrangian extension of the flat Lie algebra $(\h,\nabla)$ $($resp. $(\tilde{\h},\tilde{\nabla})$ $)$ with respect to $\alpha$ $($resp. $\tilde{\alpha})$. Assume that there exists $(\Omega,\mathfrak{j})$ in $\G_{\nabla,\alpha}$ such that $\B=(\G_{\nabla,\alpha}/\mathfrak{j},\nabla^\Omega)\cong(\tilde{\h},\tilde{\nabla})$. Then, $\G_{\nabla^\Omega,\beta}:=\B\oplus\B^\ast$ is isomorphic to $\tilde{\G}_{\tilde{\nabla},\tilde{\alpha}}$ with $\beta\in Z^2_{\nabla^\Omega}(\B,\B^\ast)$. In particular, $\G_{\nabla,\big(\Lambda_\ast^{-1}\big)(\widehat{\beta})}$ and $\tilde{\G}_{\tilde{\nabla},\widehat{\alpha}}$ are isomorphic, where $\widehat{\beta}\in Z^2_{L,\nabla^\Omega}(\B,\B^\ast)$, such that $\widehat{\beta}(x,y)=\Gamma\circ\pi_\B([\tilde{x},\tilde{y}])$, and $\widehat{\alpha}=(\Lambda\circ\Psi)_\ast
\big(\widehat{\beta}\big)$.

\end{pr}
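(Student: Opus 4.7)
The plan is to exhibit the isomorphism by composing two reductions: first rewrite $\G_{\nabla,\alpha}$ as a Lagrangian extension over $\B$ using the Lagrangian ideal $\mathfrak{j}$ of $\Omega$, and then transport this presentation to $\tilde{\h}$ via the flat isomorphism $\Psi:(\B,\nabla^{\Omega})\to(\tilde{\h},\tilde{\nabla})$ given by hypothesis. All the structural work is already contained in Lemma \ref{pont} and Lemma \ref{isomor1}; what remains is to organize the diagram and to track cocycle classes.

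First I would choose a Lagrangian complement $N$ to $\mathfrak{j}$ inside $(\G_{\nabla,\alpha},\Omega)$, thereby producing a strong polarization $(\mathfrak{j},N)$. The construction immediately preceding Lemma \ref{pont} then furnishes the identifications $\pi_{\B}:N\to\B$ and $\Gamma:\mathfrak{j}\to\B^{\ast}$, $\Gamma(a)=\Omega(a,\cdot)$, which combine into an isomorphism of symplectic Lie algebras
\[
\Lambda=\pi_{\B}\oplus\Gamma:(\G_{\nabla,\alpha},\Omega)\longrightarrow(\G_{\nabla^{\Omega},\widehat{\beta}},\omega_{\widehat{\beta}}),
\]
with $\widehat{\beta}(x,y)=\Gamma\circ\pi_{\B}([\tilde{x},\tilde{y}])\in Z^{2}_{\nabla^{\Omega}}(\B,\B^{\ast})$. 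Pulling $\widehat{\beta}$ back along $\Lambda$ yields $\Lambda_{\ast}^{-1}(\widehat{\beta})\in Z^{2}_{\nabla}(\h,\h^{\ast})$, and Lemma \ref{isomor1} gives $\G_{\nabla,\alpha}\cong\G_{\nabla,\Lambda_{\ast}^{-1}(\widehat{\beta})}$.

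Next I would push the whole picture forward by $\Psi$. Because $\Psi$ intertwines the two flat connections, its codifferential intertwines the representations $\rho^{\nabla^{\Omega}}$ and $\rho^{\tilde{\nabla}}$; hence $\Psi\oplus(\Psi^{\ast})^{-1}:\B\oplus\B^{\ast}\to\tilde{\h}\oplus\tilde{\h}^{\ast}$ is an isomorphism of Lie algebras sending $\widehat{\beta}$ to $\widehat{\alpha}=(\Lambda\circ\Psi)_{\ast}(\widehat{\beta})\in Z^{2}_{\tilde{\nabla}}(\tilde{\h},\tilde{\h}^{\ast})$. Composing with $\Lambda$ produces an isomorphism of symplectic Lie algebras $(\G_{\nabla,\alpha},\Omega)\to(\tilde{\G}_{\tilde{\nabla},\widehat{\alpha}},\omega_{\widehat{\alpha}})$. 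Finally, applying the same reduction to the strongly polarized $(\tilde{\G}_{\tilde{\nabla},\tilde{\alpha}},\tilde{\omega})$ and invoking Lemma \ref{isomor1} shows $[\widehat{\alpha}]=[\tilde{\alpha}]$ in $H^{2}_{\tilde{\nabla}}(\tilde{\h},\tilde{\h}^{\ast})$, giving $\G_{\nabla^{\Omega},\widehat{\beta}}\cong\tilde{\G}_{\tilde{\nabla},\tilde{\alpha}}$; the \emph{in particular} clause is then the mirror statement on the $\h$-side, obtained by pulling back the cocycle through $\Lambda$.

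The main obstacle is bookkeeping rather than anything conceptual: one must keep track of four flat Lie algebras and three transported copies of the cocycle, and carefully distinguish a representative cocycle from its cohomology class under the appropriate representation. The two points demanding care are verifying that $\widehat{\alpha}$ really is a cocycle for $\rho^{\tilde{\nabla}}$ (which uses that $\Psi$ preserves the connections) and that changing the auxiliary Lagrangian complement $N$ alters $\widehat{\beta}$ only by a coboundary, so that the resulting class in $H^{2}_{L,\nabla^{\Omega}}(\B,\B^{\ast})$ is well defined. Both are implicit in Lemma \ref{pont} and Lemma \ref{isomor1}, but have to be invoked at each stage of the composition.
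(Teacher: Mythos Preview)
Your approach is essentially the paper's: reduce $(\G_{\nabla,\alpha},\Omega)$ over the Lagrangian ideal $\mathfrak{j}$ via $\Lambda=\pi_{\B}\oplus\Gamma$ to land in a Lagrangian extension over $\B$, then transport along the flat isomorphism $\psi:(\B,\nabla^{\Omega})\to(\tilde{\h},\tilde{\nabla})$ using the map $(x,\eta)\mapsto(\psi(x),\psi_{\ast}\eta)$. The ``in particular'' clause falls out exactly as you describe.

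There is, however, one genuine gap. Your final sentence asserts that ``applying the same reduction to the strongly polarized $(\tilde{\G}_{\tilde{\nabla},\tilde{\alpha}},\tilde{\omega})$ and invoking Lemma~\ref{isomor1} shows $[\widehat{\alpha}]=[\tilde{\alpha}]$''. This does not follow: $\widehat{\alpha}$ is produced from the data $(\G_{\nabla,\alpha},\Omega,\mathfrak{j})$, whereas $\tilde{\alpha}$ is the \emph{given} cocycle for $\tilde{\G}$; nothing in the hypotheses links the two, and reducing $\tilde{\G}_{\tilde{\nabla},\tilde{\alpha}}$ along its canonical Lagrangian $\tilde{\h}^{\ast}$ simply reproduces $\tilde{\alpha}$, not $\widehat{\alpha}$. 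The first conclusion of the proposition is not that $\widehat{\beta}$ works, but that \emph{some} $\beta$ works.

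The fix is the one the paper uses and is already implicit in your own construction of the transport map: since $\psi$ is a flat isomorphism, $\psi_{\ast}$ is a bijection $Z^{2}_{L,\nabla^{\Omega}}(\B,\B^{\ast})\to Z^{2}_{L,\tilde{\nabla}}(\tilde{\h},\tilde{\h}^{\ast})$, so one simply takes $\beta=\psi_{\ast}^{-1}\tilde{\alpha}$ and observes that your map $\psi\oplus(\psi^{\ast})^{-1}$ is then an isomorphism $\G_{\nabla^{\Omega},\beta}\to\tilde{\G}_{\tilde{\nabla},\tilde{\alpha}}$ directly. No comparison with $\widehat{\alpha}$ is needed for this part.
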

\begin{proof}
Let $(\Omega,\mathfrak{j})$ in $\G_{\nabla,\alpha}$, where $\Omega$ is a symplectic form on $\G_{\nabla,\alpha}$, and $\mathfrak{j}$ is a Lagrangian ideal of $(\G_{\nabla,\alpha},\Omega)$. Put $\B=(\G_{\nabla,\alpha}/\mathfrak{j},\nabla^\Omega)$, and assume that $(\B,\nabla^\Omega)$ and $(\tilde{\h},\tilde{\nabla})$ are isomorphic, i.e., there exists an isomorphism $\psi : \B\longrightarrow\tilde{\h}$ such that $\psi\big(\nabla^\Omega_xy)=
\tilde{\nabla}_{\psi(x)}\psi(y)$ for all $x,y\in\B$. Let $(\G_{\nabla^\Omega,\beta},\omega)$ be the Lagrangian extension of the flat Lie algebra $(\B,\nabla^\Omega)$ with respect to $\beta\in Z^2_{\nabla^\Omega}(\B,\B^\ast)$. Choose a strong polarization $(\G_{\nabla^\Omega,\beta},\omega,\mathfrak{a},N)$, then there exists $\beta_1 = \beta_{1(\G_{\nabla^\Omega,\beta},\omega, \mathfrak{a},N)}\in Z^2_{\nabla^\Omega}(\B, \B^\ast)$
satisfying (\ref{symexten}), such that $(\G_{\nabla^\Omega,\beta},\omega, \mathfrak{a},N)$ is isomorphic to $F(\B, \nabla^\Omega,\beta_1)$ (see \cite{BV}, Theorem $4.2.1$). Let $(\tilde{\G}_{\tilde{\nabla},\tilde{\alpha}},\tilde{\omega}, \tilde{\mathfrak{a}},\tilde{N})$ be a strong polarization of the symplectic
Lie algebra $(\tilde{\G}_{\tilde{\nabla},\tilde{\alpha}},\tilde{\omega})$, for which there exists  $\alpha_1\in Z^2_{L,\tilde{\nabla}}(\tilde{\h},\tilde{\h}^\ast)$ such that $\alpha_1:=\alpha_{1(\tilde{\G}_{\tilde{\nabla},\tilde{\alpha}},\tilde{\omega},\mathfrak{a},N)}=\psi_\ast(\beta_1)$ (push-forward of $\beta_1$). Using $(\ref{Liebracket1})$ and $(\ref{Liebracket2})$ it is easily verified that 
the map 
\begin{equation}\label{Iso18}
\Psi:\big(\G_{\nabla^{\Omega},\beta_1},\omega\big)\longrightarrow\big(\tilde{\G}_{\tilde{\nabla},\alpha_1},\tilde{\omega}\big),~(x,\eta)\longmapsto (\psi(x),\psi_\ast\eta)
\end{equation}
is the required isomorphism of symplectic Lie algebras.  Since $\psi$ is an isomorphism, then for any $\tilde{\alpha}\in Z^2_{L,\tilde{\nabla}}(\tilde{\h},\tilde{\h}^\ast)$, there exists $\beta\in Z^2_{L,\nabla^\Omega}(\B,\B^\ast)$ such that $\tilde{\alpha}=\psi_\ast\beta$. This implies that $\G_{\nabla^\Omega,\beta}$ and $\tilde{\G}_{\tilde{\nabla},\tilde{\alpha}}$ are isomorphic. According to Lemma $\ref{pont}$, the map 

\begin{equation}
\Lambda\circ\Psi: \big(\G_{\nabla,\big(\Lambda_\ast^{-1}\big)(\widehat{\beta})},\omega\big)\longrightarrow\big(\tilde{\G}_{\tilde{\nabla},\widehat{\alpha}},\tilde{\omega}\big),
\end{equation} 
is the required isomorphism of symplectic Lie algebras, where $\widehat{\beta}\in Z^2_{L,\nabla^\Omega}(\B,\B^\ast)$, such that $\widehat{\beta}(x,y)=\Gamma\circ\pi_\B([\tilde{x},\tilde{y}])$, and $\widehat{\alpha}=(\Lambda\circ\Psi)_\ast
\big(\widehat{\beta}\big)$.

\end{proof}

\begin{Le}\label{Remofiso}
 If two Lagrangian extensions $(\G_{\nabla,\alpha},\omega)$ and $(\G^\prime_{\nabla^\prime,\alpha^\prime},\omega^\prime)$ over $(\h,\nabla)$ $($resp. $(\h^\prime,\nabla^\prime))$ are isomorphic, then there exist a symplectic form  $\Omega$ and a Lagrangian ideal $\mathfrak{a}$ of $(\G_{\nabla,\alpha},\Omega)$ such that $(\G_{\nabla,\alpha}/\mathfrak{a},\nabla^{\Omega})$   and $(\h^\prime,\nabla^\prime)$ are isomorphic.
 
 \end{Le}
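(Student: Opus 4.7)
The plan is to take $\Omega=\omega$ itself and to build $\mathfrak{a}$ as the pullback, through the hypothesised symplectic isomorphism, of the canonical Lagrangian ideal of the second Lagrangian extension. Let $\varphi:(\G_{\nabla,\alpha},\omega)\longrightarrow(\G^\prime_{\nabla^\prime,\alpha^\prime},\omega^\prime)$ denote the given isomorphism of symplectic Lie algebras. By the very construction of a Lagrangian extension, $(\h^\prime)^\ast\subset\G^\prime_{\nabla^\prime,\alpha^\prime}$ is a Lagrangian ideal whose quotient flat Lie algebra, with the connection induced from $\omega^\prime$ via Proposition \ref{Pr1isomo}, is precisely $(\h^\prime,\nabla^\prime)$.

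First I would set $\Omega:=\omega$ and $\mathfrak{a}:=\varphi^{-1}\bigl((\h^\prime)^\ast\bigr)$. Since $\varphi$ is a Lie algebra isomorphism, $\mathfrak{a}$ is an ideal of $\G_{\nabla,\alpha}$; and since $\varphi^\ast\omega^\prime=\omega$, one has $\mathfrak{a}^{\perp_\omega}=\varphi^{-1}\bigl(((\h^\prime)^\ast)^{\perp_{\omega^\prime}}\bigr)=\varphi^{-1}\bigl((\h^\prime)^\ast\bigr)=\mathfrak{a}$, so $\mathfrak{a}$ is a Lagrangian ideal of $(\G_{\nabla,\alpha},\Omega)$. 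Thus $\varphi$ descends to a Lie algebra isomorphism $\bar\varphi:\G_{\nabla,\alpha}/\mathfrak{a}\longrightarrow\h^\prime$ and the structural requirements of the conclusion are already met.

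The one remaining point---the only step that is not pure bookkeeping---is to check that $\bar\varphi$ also intertwines the induced quotient connection $\nabla^{\Omega}$ with $\nabla^\prime$. I would verify this by applying the defining identity $(\ref{Pr1connex})$ on both sides: for $\bar x,\bar y\in\G_{\nabla,\alpha}/\mathfrak{a}$ and $u\in\mathfrak{a}$ one has $\omega_\h(\nabla^{\Omega}_{\bar x}\bar y,u)=-\omega(y,[x,u])$, and analogously on $\G^\prime_{\nabla^\prime,\alpha^\prime}/(\h^\prime)^\ast$. Combining $\varphi^\ast\omega^\prime=\omega$, $\varphi([x,u])=[\varphi(x),\varphi(u)]$ and the fact that $\varphi$ restricts to a linear isomorphism $\mathfrak{a}\longrightarrow(\h^\prime)^\ast$, the identity for $\nabla^{\Omega}$ transports to the identity for $\nabla^\prime$, which forces $\bar\varphi(\nabla^{\Omega}_{\bar x}\bar y)=\nabla^\prime_{\bar\varphi(\bar x)}\bar\varphi(\bar y)$. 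The main obstacle is purely notational, namely matching the two non-degenerate pairings $\omega_\h:\h\to\mathfrak{a}^\ast$ and $\omega_{\h^\prime}:\h^\prime\to((\h^\prime)^\ast)^\ast$ under $\bar\varphi$ and $\varphi|_{\mathfrak{a}}$; once that diagram is in place, the intertwining is an immediate diagram chase.
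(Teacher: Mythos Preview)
Your proposal is correct and follows essentially the same route as the paper: pull back the canonical Lagrangian ideal $(\h^\prime)^\ast$ through the symplectic isomorphism to get $\mathfrak{a}$, and take $\Omega$ to be the pulled-back form (which equals $\omega$ since $\varphi$ is a symplectomorphism). Your explicit verification via identity~(\ref{Pr1connex}) that $\bar\varphi$ intertwines $\nabla^{\Omega}$ with $\nabla^\prime$ is a bit more detailed than what the paper writes, but the argument is the same.
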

 \begin{proof}
 Let $\Phi : (\G_{\nabla,\alpha},\omega)\longrightarrow(\G^\prime_{\nabla^\prime,\alpha^\prime},\omega^\prime)$ be an isomorphism of symplectic Lie algebras, and $\Phi^{-1}_{\h^\prime} : \h^\prime =\G_{\nabla^\prime,\alpha^\prime}^\prime/\h^{\prime\ast}\longrightarrow \G_{\nabla,\alpha}/\mathfrak{a}$ the induced
map on quotients, where $\mathfrak{a}=\Phi^{-1}(\h^{\prime\ast})$. Then $\nabla^{\prime\prime}=(\Phi^{-1}_{\h^\prime})_{\ast}\nabla^\prime$   is the
associated quotient flat connection on $\G_{\nabla,\alpha}/\mathfrak{a}$ and $\Omega=(\Phi^{-1})^\ast\omega^\prime$
 is a sympelectic form on $\G_{\nabla,\alpha}$ that has $\mathfrak{a}$ as a Lagrangian ideal such that $(\G_{\nabla,\alpha}/\mathfrak{a},\nabla^{\Omega})\cong (\h^\prime,\nabla^\prime)$.
 \end{proof}

\begin{remark}
If $(\G_{\nabla,\alpha},\omega)$ has a unique Lagrangian extension, that is, $\G_{\nabla,\alpha}=\h\oplus\h^\ast$ and $(\h,\nabla)$ is the unique induced flat Lie algebra. Then $\G_{\nabla,\alpha}$ can not be isomorphic to any Lagrangian extension $($Uniqueness of the reconstruction$)$. There may be other Lagrangian extensions that are isomorphic to this one.
\end{remark}

We now recall the following cohomology group, which is a description of all Lagrangian extensions of $\h$ with corresponding flat Lie algebras $(\h, \nabla)$:

First, we define Lagrangian one- and two-cochains on $\h$ as
\begin{eqnarray*}
\mathcal{C}^1_L(\h,\h^\ast)&=&\Big\{\phi\in\mathcal{C}^1(\h,\h^\ast)~|~\phi(x)(y)-\phi(y)(x)=0,~\text{for all}~x,y\in\h\Big\}\\
\mathcal{C}^2_L(\h,\h^\ast)&=&\Big\{\alpha\in\mathcal{C}^2(\h,\h^\ast)~|~\sum_{cycl}\alpha(x,y)(z)=0,~\text{for all}~x,y,z\in\h\Big\}
\end{eqnarray*}
Then, as well, let $\rho = \rho^\nabla$ be the representation of $\h$ on $\h^\ast$ associated to $\nabla$, as defined in
(\ref{reprenew}). Denote by $\partial=\partial_\rho^i$ the corresponding coboundary operators for cohomology with $\rho$-coefficients. The coboundary operator $\partial : \mathcal{C}^1(\h,\h^\ast) \longrightarrow \mathcal{C}^2(\h,\h^\ast)$ maps the subspace $\mathcal{C}^1_L(\h,\h^\ast)$ into $\mathcal{C}^2_L(\h,\h^\ast)\cap Z^2_\rho(\h,\h^\ast)$ (see, Lemma 4.4.2 \cite{BV}).

Let $Z^2_{L,\rho}(\h, \h^\ast) = \mathcal{C}_L^2(\h, \h^\ast) \cap Z_\rho^2(\h, \h^\ast)$ denote the space of Lagrangian cocycles. We now define the Lagrangian extension cohomology group for the flat Lie algebra
$(\h, \nabla)$ as
\begin{equation*}
H^2_{L,\rho}(\h,\h^\ast)=\frac{Z^2_{L,\rho}(\h, \h^\ast)}{\partial\mathcal{C}^1_L(\h,\h^\ast)}.
\end{equation*}
A Natural map exists from $H^2_{L,\rho}(\h,\h^\ast)$ to the ordinary Lie algebra cohomology group $H^2_{\rho}(\h,\h^\ast)$.  This map does not have to be injective, in general,  see Table \ref{tableofcoho}.   The kernel $\kappa_L$ of the natural map 
\begin{equation*}
H^2_{L,\rho}(\h,\h^\ast)\longrightarrow H^2_{\rho}(\h,\h^\ast)
\end{equation*}
is given by
\begin{equation}
\kappa_L=\frac{B^2_{\rho}(\h,\h^\ast)\cap Z^2_{L,\rho}(\h,\h^\ast)}{B^2_{L,\rho}(\h,\h^\ast)},
\end{equation}
where $B^2_\rho(\h,\h^\ast) =\{ \partial\mu ~|~ \mu\in\mathrm{Hom}(\h, \h^\ast)\}$ is the set of ordinary two-coboundaries with $\rho$-coefficients and $B_{L, \rho}^2(\h, \h^\ast) = \{\partial\mu~ |~ \mu \in\mathcal{C}^1_L(\h, \h^\ast)\}$ is the set of two-coboundaries
for Lagrangian extension cohomology.

Similar to Lemma \ref{isomor1}, we can state:
\begin{Le}\label{isofromLrhotorho} Let $[\alpha]\in H^2_\rho(\h,\h^\ast)$. The map
\begin{equation}\label{isooftwodiffcoho}
\Psi: \G_{\nabla,\alpha^\prime}\longrightarrow \G_{\nabla,\beta},~~(x,\xi)\longmapsto (x,\xi+(\sigma-\mu)(x))
\end{equation}
 is the required isomorphism of Lie algebras, where $\beta=\alpha-\partial\sigma$ and $\alpha^\prime=\alpha-\partial\mu$, such that, $\sigma\in\mathcal{C}^1(\h,\h^\ast)$ and  $\mu\in\mathcal{C}^1_L(\h,\h^\ast)$.
\end{Le}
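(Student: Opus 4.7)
The plan is to reduce the statement to the already-established Lemma \ref{isomor1}. First I would observe that although $\alpha'$ and $\beta$ are produced from two different types of cochains (namely $\mu \in \mathcal{C}^1_L(\h,\h^\ast)$ and $\sigma \in \mathcal{C}^1(\h,\h^\ast)$), they still represent the same class in ordinary cohomology. A direct computation gives
\[
\beta - \alpha' = (\alpha - \partial\sigma) - (\alpha - \partial\mu) = -\partial(\sigma - \mu),
\]
and since $\sigma - \mu \in \mathcal{C}^1(\h,\h^\ast)$, we conclude $\beta = \alpha' - \partial(\sigma-\mu)$ and in particular $[\alpha'] = [\beta]$ in $H^2_\rho(\h,\h^\ast)$.

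Next I would invoke Lemma \ref{isomor1} applied to the pair $\G_{\nabla,\alpha'}$ and $\G_{\nabla,\beta}$, with the single cochain $\tau := \sigma - \mu$ playing the role of the $\sigma$ appearing in that lemma. Lemma \ref{isomor1} then produces the required Lie-algebra isomorphism
\[
(x,\xi)\longmapsto (x,\xi + \tau(x)) = (x, \xi + (\sigma-\mu)(x)),
\]
which is exactly the map $\Psi$ in the statement.

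As a direct alternative (useful as a sanity check), I would expand both sides of the would-be homomorphism equation using (\ref{Liebracket1}) and (\ref{Liebracket2}), in the same spirit as equations (\ref{eq1}) and (\ref{eq2}) of Lemma \ref{isomor1}. This yields
\[
\Psi\bigl([x+\xi_1,y+\xi_2]_{\G_{\nabla,\alpha'}}\bigr) - [\Psi(x+\xi_1),\Psi(y+\xi_2)]_{\G_{\nabla,\beta}} = \alpha'(x,y) - \beta(x,y) - \partial(\sigma - \mu)(x,y),
\]
which vanishes by the relation $\beta = \alpha' - \partial(\sigma-\mu)$ established in the first step.

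The only real pitfall is bookkeeping of signs: one must record carefully that $\beta$ is expressed as $\alpha' - \partial(\sigma - \mu)$ (not $\alpha' + \partial(\sigma-\mu)$), since this sign convention dictates the shift $(\sigma-\mu)(x)$ added to the $\h^\ast$-component in $\Psi$. Beyond this, the argument carries no genuine difficulty; the content of the lemma is essentially the observation that the cohomological equivalence in Lemma \ref{isomor1} persists when one of the two representatives is specifically chosen from the subcomplex $\mathcal{C}^1_L$ used to define $H^2_{L,\rho}(\h,\h^\ast)$.
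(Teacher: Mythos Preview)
Your proposal is correct and matches the paper's approach exactly: the paper introduces this lemma with the phrase ``Similar to Lemma~\ref{isomor1}, we can state,'' indicating that the proof is just Lemma~\ref{isomor1} applied with the cochain $\sigma-\mu$ in place of $\sigma$. Your reduction, sign bookkeeping, and direct verification are precisely what is intended.
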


The following shows that the group $H^2_{L,\rho}(\h,\h^\ast)$ corresponds one-to-one to the isomorphism classes of Lagrangian extensions over a flat Lie algebra.

\begin{pr}\textsc{\cite{BV}}\label{1-1 corress}
The correspondence which associates to a symplectic Lie algebra with Lagrangian ideal $(\G, \omega, \mathfrak{j})$ the extension triple $(\h, \nabla, [\alpha]_{L,\rho})$ induces a bijection
between isomorphism classes of symplectic Lie algebras with Lagrangian ideal and isomorphism classes of flat Lie algebras with symplectic extension cohomology class.
\end{pr}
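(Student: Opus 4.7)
The plan is to establish the bijection by constructing maps in both directions via strong polarizations and invoking the Lagrangian extension machinery developed in the preceding subsection. Given a symplectic Lie algebra with Lagrangian ideal $(\G,\omega,\mathfrak{j})$, Proposition \ref{Pr1isomo} immediately supplies the associated flat Lie algebra $(\h,\nabla^\omega)$ with $\h=\G/\mathfrak{j}$ and the identification $\Gamma:\mathfrak{j}\to\h^\ast$ induced by $\omega$. I would then pick a Lagrangian complement $N$ to $\mathfrak{j}$ in $\G$, obtaining a strong polarization $(\mathfrak{j},N)$, and define $\widehat{\alpha}(x,y)=\Gamma\circ\pi_\mathfrak{j}([\widetilde{x},\widetilde{y}])$ exactly as in the change of flat Lie algebra construction. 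A short computation using that $N$ is Lagrangian and that $\omega$ is closed verifies that $\widehat{\alpha}$ satisfies the Bianchi identity (\ref{symexten}), so $\widehat{\alpha}\in Z^2_{L,\nabla^\omega}(\h,\h^\ast)$.

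The first main step is to show well-definedness of the class $[\widehat{\alpha}]\in H^2_{L,\nabla^\omega}(\h,\h^\ast)$ under the choice of Lagrangian complement. If $N'$ is another Lagrangian complement, it is the graph of a linear map $\sigma:\h\to\mathfrak{j}\cong\h^\ast$; the fact that both $N$ and $N'$ are Lagrangian translates into the symmetry condition $\sigma(x)(y)=\sigma(y)(x)$, so that $\sigma\in\mathcal{C}^1_L(\h,\h^\ast)$. A direct calculation then shows that the cocycles produced by $N$ and $N'$ differ precisely by $\partial\sigma$, giving the same class in $H^2_{L,\nabla^\omega}(\h,\h^\ast)$. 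Combined with Lemma \ref{Remofiso}, which guarantees that an isomorphism of symplectic Lie algebras with Lagrangian ideal descends to an isomorphism of the associated flat quotients, this confirms that the assignment $(\G,\omega,\mathfrak{j})\mapsto(\h,\nabla^\omega,[\widehat{\alpha}]_{L,\nabla})$ is well-defined on isomorphism classes.

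For the inverse direction, I would use the Lagrangian extension functor: given $(\h,\nabla,[\alpha]_{L,\nabla})$ with $\alpha\in Z^2_{L,\nabla}(\h,\h^\ast)$, form $(\G_{\nabla,\alpha},\omega_0)$ together with its canonical Lagrangian ideal $\h^\ast$. This construction descends to classes by a Lagrangian refinement of Lemma \ref{isomor1}: when the connecting cochain $\sigma$ lies in $\mathcal{C}^1_L(\h,\h^\ast)$, the isomorphism $\Phi$ of that lemma fixes the subspace $\h^\ast\subset\G$ setwise, hence maps the canonical Lagrangian ideal to itself and defines an isomorphism of symplectic Lie algebras with Lagrangian ideal. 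Computing the forward map of $(\G_{\nabla,\alpha},\omega_0,\h^\ast)$ with polarization $(\h^\ast,\h)$ recovers the triple $(\h,\nabla,[\alpha]_{L,\nabla})$, so the two maps are mutually inverse. Proposition \ref{LEiso} and Lemma \ref{pont} finally confirm that every $(\G,\omega,\mathfrak{j})$ is isomorphic to the canonical extension of its associated triple, closing the bijection.

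The main obstacle I anticipate is the well-definedness step on $H^2_{L,\nabla}$ rather than on $H^2_\nabla$: one must check not only that two strong polarizations yield ordinary-cohomologous cocycles, but that the connecting cochain itself lies in the Lagrangian subspace $\mathcal{C}^1_L(\h,\h^\ast)$, and similarly that the isomorphism of Lemma \ref{isomor1} preserves the canonical Lagrangian ideal exactly when $\sigma$ is Lagrangian. This is precisely the content distinguishing $H^2_{L,\nabla}$ from $H^2_{\nabla}$; the fact that the kernel $\kappa_L$ is generally nonzero shows why this refinement cannot be bypassed.
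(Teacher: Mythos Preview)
The paper does not give its own proof of this proposition: it is stated with the citation \textsc{\cite{BV}} and no argument follows. The result is imported from Baues--Cort\'es, and the surrounding material in Section~\ref{sec1} (the change-of-flat-Lie-algebra discussion, Lemmas~\ref{isomor1}, \ref{pont}, \ref{Remofiso}, and Proposition~\ref{LEiso}) is a summary of the relevant machinery from that reference rather than a self-contained proof.

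Your outline is a faithful reconstruction of the standard argument and would serve as a proof. Two small points worth tightening: first, the isomorphism $\Phi$ of Lemma~\ref{isomor1} is only asserted there to be a Lie algebra isomorphism, so when you invoke its Lagrangian refinement you must also verify that $\Phi^\ast\omega_0=\omega_0$, which follows from $\sigma\in\mathcal{C}^1_L(\h,\h^\ast)$ since the antisymmetric part of $\sigma$ then vanishes. Second, Lemma~\ref{Remofiso} as stated gives the converse implication (isomorphic extensions yield isomorphic quotient flat data), which is what you need for well-definedness of the forward map, but you should state explicitly that the isomorphism there preserves the distinguished Lagrangian ideals---this is automatic because the isomorphism is assumed to carry $\mathfrak{a}$ to $\mathfrak{a}'$ in the category of symplectic Lie algebras \emph{with} Lagrangian ideal. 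With those checks made explicit, your argument is complete and matches what one finds in \cite{BV}.
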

If two symplectic Lie algebras are probably isomorphic, finding isomorphism between them requires that we compare their flat structure and Lagrangian extension cohomology group to ensure that they are isomorphic.

 Let $(\G,\omega)$ be the Lagrangian extension of the flat Lie algebra $(\h,\nabla)$ with respect to $\alpha\in Z^2_{\nabla}(\h,\h^\ast)$, and assume that there exists a pair $(\tilde{\omega},\tilde{\mathfrak{a}})$ consisting of  a symplectic form $\tilde{\omega}$ on $\G$ and  a Lagrangian ideal $\tilde{\mathfrak{a}}$ of $(\G,\tilde{\omega})$. Let $(\tilde{\G},\omega)$ be the Lagrangian extension of the flat Lie algebra $(\tilde{\h},\nabla^{\tilde{\omega}})$ with respect to $\tilde{\alpha}\in Z^2_{\nabla^{\tilde{\omega}}}(\tilde{\h},\tilde{\h}^\ast)$, where $\tilde{\h}:=\G/\tilde{\mathfrak{a}}$.

 We have the following$:$
\begin{pr}\label{diffiso}
Two extensions $(\G_{\nabla,\alpha},\omega, \mathfrak{a})$ and $(\G^\prime_{\nabla^\prime,\alpha^\prime},\omega^\prime, \mathfrak{a}^\prime)$ over $(\h,\nabla)$ $($resp. $(\h^\prime,\nabla^\prime))$ are isomorphic if and only
if    the cohomology class
\begin{equation}
[(\Psi_{\tilde{\h}} )_\ast
\tilde{\alpha}-\alpha^\prime]\in H^2_{L,\rho}(\h^{\prime},\h^{\prime\ast})
\end{equation}
vanishes, where $\tilde{\alpha}(x,y)=\Gamma\circ\pi_{\tilde{\h}}([\tilde{x},\tilde{y}])$, for all $x,y\in\tilde{\h}$ and $\Psi : \tilde{\G}_{\nabla^{\tilde{\omega}},\tilde{\alpha}}=\tilde{\h}\oplus\tilde{\h}^\ast\longrightarrow
\G^\prime_{\nabla^\prime,\alpha^\prime}$ is an isomorphism of symplectic Lie algebras which is given in  Proposition $\ref{LEiso}$.

\end{pr}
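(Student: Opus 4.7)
The plan is to establish both implications by chaining together Lemma \ref{pont}, Proposition \ref{LEiso}, Lemma \ref{Remofiso} and the bijective classification of Proposition \ref{1-1 corress}. The starting observation is that, via the Lagrangian ideal $\mathfrak{a}$ and the associated quotient flat connection $\nabla^{\tilde\omega}$, Lemma \ref{pont} identifies $(\G_{\nabla,\alpha},\tilde\omega)$ canonically with the Lagrangian extension $\tilde\G_{\nabla^{\tilde\omega},\tilde\alpha}$, and Proposition \ref{LEiso} transports this identification to an isomorphism of symplectic Lie algebras $\Lambda\circ\Psi:(\G_{\nabla,\alpha},\tilde\omega)\longrightarrow(\G^\prime_{\nabla^\prime,\widehat\alpha},\omega^\prime)$, where $\widehat\alpha=(\Psi_{\tilde\h})_\ast\tilde\alpha\in Z^2_{L,\rho^{\nabla^\prime}}(\h^\prime,\h^{\prime\ast})$. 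Thus the question reduces to deciding when the two Lagrangian extensions $\G^\prime_{\nabla^\prime,\widehat\alpha}$ and $\G^\prime_{\nabla^\prime,\alpha^\prime}$, sitting over the \emph{same} flat Lie algebra $(\h^\prime,\nabla^\prime)$, are isomorphic.

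For the implication $(\Leftarrow)$, assume $[\widehat\alpha-\alpha^\prime]=0$ in $H^2_{L,\rho}(\h^\prime,\h^{\prime\ast})$. Then there exists $\mu\in\mathcal{C}^1_L(\h^\prime,\h^{\prime\ast})$ with $\widehat\alpha-\alpha^\prime=\partial\mu$. Because $\mu$ is Lagrangian, the map $(x,\xi)\mapsto(x,\xi+\mu(x))$ is not only the Lie-algebra isomorphism furnished by Lemma \ref{isomor1} but also preserves the canonical symplectic form $\omega_0$ on $\h^\prime\oplus\h^{\prime\ast}$, giving an isomorphism of symplectic Lie algebras $\G^\prime_{\nabla^\prime,\widehat\alpha}\cong\G^\prime_{\nabla^\prime,\alpha^\prime}$. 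Composing with $\Lambda\circ\Psi$ produces the required symplectic isomorphism $(\G_{\nabla,\alpha},\omega)\cong(\G^\prime_{\nabla^\prime,\alpha^\prime},\omega^\prime)$.

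For the implication $(\Rightarrow)$, suppose $\Phi:(\G_{\nabla,\alpha},\omega)\longrightarrow(\G^\prime_{\nabla^\prime,\alpha^\prime},\omega^\prime)$ is an isomorphism of symplectic Lie algebras. Following Lemma \ref{Remofiso}, set $\Omega=\Phi^\ast\omega^\prime$ and $\mathfrak{a}=\Phi^{-1}(\h^{\prime\ast})$; then $(\tilde\h,\nabla^{\tilde\omega}):=(\G_{\nabla,\alpha}/\mathfrak{a},\nabla^\Omega)$ is isomorphic to $(\h^\prime,\nabla^\prime)$, which puts us exactly in the hypothesis of Proposition \ref{LEiso} with $\tilde\omega=\Omega$. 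The resulting composite $\G^\prime_{\nabla^\prime,\widehat\alpha}\cong\G_{\nabla,\alpha}\cong\G^\prime_{\nabla^\prime,\alpha^\prime}$ is a symplectic isomorphism over the common flat Lie algebra $(\h^\prime,\nabla^\prime)$ that, by construction, sends the canonical Lagrangian ideal $\h^{\prime\ast}$ of the source to the canonical Lagrangian ideal $\h^{\prime\ast}$ of the target. The bijection of Proposition \ref{1-1 corress} then forces $[\widehat\alpha]_{L,\rho}=[\alpha^\prime]_{L,\rho}$, i.e.\ the vanishing of $[(\Psi_{\tilde\h})_\ast\tilde\alpha-\alpha^\prime]\in H^2_{L,\rho}(\h^\prime,\h^{\prime\ast})$.

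The main obstacle is the $(\Rightarrow)$ direction: a generic symplectic isomorphism need not preserve the distinguished Lagrangian ideal $\h^{\prime\ast}$, and the Lagrangian cohomology group $H^2_{L,\rho}$ is strictly finer than $H^2_\rho$ (the kernel $\kappa_L$ can be nonzero). The delicate step is therefore to keep track of strong polarizations along the chain supplied by Proposition \ref{LEiso}, so that the classifying statement of Proposition \ref{1-1 corress} really applies at the level of Lagrangian extension cohomology rather than at the coarser ordinary cohomology level; this is precisely what the pair $(\Omega,\mathfrak{a})$ built from $\Phi$ via Lemma \ref{Remofiso} accomplishes.
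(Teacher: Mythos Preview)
Your proof is correct and follows essentially the same approach as the paper: both directions hinge on Lemma \ref{Remofiso} to produce the pair $(\tilde\omega,\mathfrak{a})$ realizing $(\tilde\h,\nabla^{\tilde\omega})\cong(\h^\prime,\nabla^\prime)$, on Proposition \ref{LEiso} (equivalently Lemma \ref{pont} composed with the map $\Psi$) to transport to an extension $\G^\prime_{\nabla^\prime,\widehat\alpha}$ over $(\h^\prime,\nabla^\prime)$, and on the classification of Lagrangian extensions by $H^2_{L,\rho}$ to conclude. The only cosmetic difference is that the paper phrases the $(\Rightarrow)$ step directly in terms of strong polarizations---choosing a complementary Lagrangian $N$ and transporting it via the given isomorphism so that $\alpha^\prime_{(\G^\prime,\omega^\prime,\mathfrak{a}^\prime,N^\prime)}=(\Psi_{\tilde\h})_\ast\tilde\alpha_{(\tilde\G,\omega,\check{\mathfrak{a}},N)}$---whereas you invoke Proposition \ref{1-1 corress} as a black box; these are equivalent, and your final paragraph correctly identifies why the argument lands in $H^2_{L,\rho}$ rather than merely $H^2_\rho$.
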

\begin{proof}
According to Lemma $\ref{Remofiso}$, there exists $(\tilde{\omega},\tilde{\mathfrak{a}})$ in $\G$ such that $(\tilde{\h},\nabla^{\tilde{\omega}})$   and $(\h^\prime,\nabla^\prime)$ are isomorphic. Let $(\tilde{\G},\omega, \check{\mathfrak{a}})$ be a Lagrangian extension over $(\tilde{\h}, \nabla^{\tilde{\omega}})$. Take a strong polarization $(\tilde{\G},\omega, \check{\mathfrak{a}},N)$, and set $[\tilde{\alpha}_{(\G,\omega, \check{\mathfrak{a}})}]=
[\tilde{\alpha}_{(\G,\omega, \check{\mathfrak{a}},N)}]$, where $\tilde{\alpha}_{(\G,\omega,\check{\mathfrak{a}},N)}\in Z^2_{L,\nabla}(\tilde{\h},\tilde{\h}^\ast)$  (Due to the fact that this cohomology class is independent of the choice of strong polarization $N$). 

Let us suppose $\Psi : (\tilde{\G}_{\nabla^{\tilde{\omega}},\tilde{\alpha}},\omega,\check{\mathfrak{a}})\longrightarrow(\G^\prime_{\nabla^\prime,\alpha^\prime},\omega^\prime,\mathfrak{a}^\prime)$ is an isomorphism. Choose for $(\tilde{\G}_{\nabla^{\tilde{\omega}},\tilde{\alpha}},\omega,\check{\mathfrak{a}})$ a complementary Lagrangian subspace $N$.  Thus, $\Psi : (\tilde{\G}_{\nabla^{\tilde{\omega}},\tilde{\alpha}},\omega,\check{\mathfrak{a}},N)\longrightarrow(\G^\prime,\omega^\prime,\mathfrak{a}^\prime=\Psi(\check{\mathfrak{a}}),N^\prime=\Psi(N))$  is an isomorphism of strongly polarized symplectic Lie algebras, and we have $\alpha^\prime_{(\G^\prime,\omega^\prime,\mathfrak{a}^\prime,N^\prime)}=(\Psi_{\tilde{\h}}\big)_\ast\tilde{\alpha}_{(\tilde{\G},\omega,\check{\mathfrak{a}},N)}$.  In this sense, isomorphic extensions over isomorphic two flat Lie algebras possess the same cohomology class. It remains to prove that two extensions over isomorphic two flat Lie algebras of the same class are isomorphic. As shown by Proposition $\ref{LEiso}$, using the isomorphism of $(\ref{Iso18})$, this is true.

\end{proof}
\begin{exem}
Consider the Lie algebra $\G_{6,2}^b=\h\oplus\h^\ast$ given by Proposition $\ref{Prclassifi}$. To demonstrate that this algebra is isomorphic to $\mathfrak{r}_2\mathfrak{r}_2\oplus\mathfrak{aff}(1,\R)$ when $b\in(-\frac{4}{27},0)$ and to $\mathfrak{r}_2^\prime\oplus\mathfrak{aff}(1,\R)$ when $b\in(-\infty,-\frac{4}{27})\cup(0,+\infty)$, it is difficult (using algebraic isomorphisms). In order to establish these isomorphisms, we will apply Proposition $\ref{diffiso}$. Let
\begin{center}
 $(\Psi_\h):=\small\left( \begin {array}{ccc} a&2\,{a}^{2}+a&-1\\ \noalign{\medskip}
\sqrt { 3\,a^2+2\,a }& \left( -2\,a-1 \right) \sqrt {3\,{a}
^{2}+2\,a}&0\\ \noalign{\medskip}-2\,a-1&-4\,{a}^{2}-2\,a&-1
\end {array} \right),$
 \end{center} 
with $b=2a(2a+1)^2$ and $a\in(-\infty,-\frac{2}{3})\cup(0,+\infty)$. On the other hand, for any symplectic form $\omega$ on $\G^\prime=\mathfrak{r}_2^\prime\oplus\mathfrak{aff}(1,\R)$, $\mathfrak{a}=\langle e_3,e_4,e_6\rangle$ is a Lagrangian ideal of $(\G^\prime,\omega)$. It is easy to see that $\Psi_\h : (\h,\nabla)\longrightarrow(\G^\prime/\mathfrak{a},\nabla^\omega)$ defines an isomorphism of flat Lie algebras.

\begin{center}
$\Psi_\h:=\small\left( \begin {array}{ccc} {\frac {
\sqrt {-3\,{a}^{2}-2\,a+1}\,-a-1}{2}}&\frac{{a}^{2} \left( \sqrt {-3\,{a}^{2}-2\,a+1
}\,+3\,a+3 \right) }{  -\sqrt {-3\,{a}^{2}-2\,a+1}\,+3\,a+1  }&-1\\ \noalign{\medskip}\frac{a \left( \sqrt {-3\,{a}^{2}-2\,a+1}\,+3\,a+
3 \right) }{  \sqrt {-3\,{a}^{2}-2\,a+1}\,-3\,a-1 }&\frac{a
 \left( 2\,a\,\sqrt {-3\,{a}^{2}-2\,a+1} \,+\sqrt {-3\,{a}^{2}-2\,a+1}\,-a-1
 \right)}{   \sqrt {-3\,{a}^{2}-2\,a+1}\,-3\,a-1 }&-1
\\ \noalign{\medskip}a&-{a}^{2}-a&-1\end {array} \right)$,
\end{center}
with $b=-a^3-a^2$ and $a\in(-1,\frac{1}{3})$.  Similarly, let $\omega$ be an arbitrary symplectic form on $\G^\prime=\mathfrak{r}_2\mathfrak{r}_2\oplus\mathfrak{aff}(1,\R)$, the subspace $\mathfrak{a}=\langle e_2,e_4,e_6\rangle$ is a Lagrangian ideal of $(\G^\prime,\omega)$. It is now easily verified that the map $\Psi_\h :(\h,\nabla)\longrightarrow(\G^\prime/\mathfrak{a},\nabla^\omega)$ defines an isomorphism of flat Lie algebras.
\end{exem}
\begin{Le}
Let $(\G_{\nabla,\alpha},\omega_0)$ be the Lagrangian extension of the flat Lie algebra $(\h,\nabla)$ with respect to $\alpha$. Then, every symplectic form $\omega$ on $\G_{\nabla,[\alpha]_\rho}$ that has  a Lagrangian ideal $\mathfrak{j}=\h^\ast$ and such that its associated flat torsion-free connection $\nabla^\omega$ isomorphic to $\nabla$,  is symplectomorphically equivalent to 
 \begin{eqnarray*}
 \omega_{[\alpha]_{L,\rho}}(x,y+\xi)=-\chi(x,y)\oplus\omega_0(x,\xi),~x\in\h, \xi\in\h^\ast,
 \end{eqnarray*}
 where $\chi=(\sigma-\mu)- {}^t\left(\sigma-\mu\right)\in\bigwedge^2\h^\ast$ for some $\mu\in\mathcal{C}^1(\h,\h^\ast)$ and $\sigma\in\mathcal{C}^1_L(\h,\h^\ast)$.

\end{Le}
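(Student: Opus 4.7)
The plan is to use Proposition $\ref{1-1 corress}$ to realize $\omega$ as the canonical symplectic form on a Lagrangian extension, use Lemma $\ref{isofromLrhotorho}$ to change the cocycle representative into a Lagrangian one, and then simply compute the resulting pullback.

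First, since $\omega$ has $\h^\ast$ as a Lagrangian ideal and $\nabla^\omega\cong\nabla$, Proposition $\ref{1-1 corress}$---together with the change-of-flat-Lie-algebra formalism preceding Lemma $\ref{pont}$ and the naturality of the symplectic isomorphism $(\ref{Iso18})$ from Proposition $\ref{LEiso}$---yields an isomorphism of symplectic Lie algebras
\begin{equation*}
\Phi:(\G_{\nabla,[\alpha]_\rho},\omega)\longrightarrow(\G_{\nabla,\beta},\omega_0)
\end{equation*}
for some $\beta\in Z^2_{L,\rho}(\h,\h^\ast)$. Because the underlying Lie brackets on both sides agree, $[\alpha]_\rho=[\beta]_\rho$ in $H^2_\rho(\h,\h^\ast)$, so $\beta=\alpha-\partial\mu$ for some $\mu\in\mathcal{C}^1(\h,\h^\ast)$.

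Second, note that $\alpha\in Z^2_{L,\rho}(\h,\h^\ast)$, since the Bianchi identity $(\ref{symexten})$ making $\omega_0$ symplectic on $\G_{\nabla,\alpha}$ is exactly the Lagrangian-cocycle condition defining $\mathcal{C}^2_L$. Pick any $\sigma\in\mathcal{C}^1_L(\h,\h^\ast)$ and set $\alpha':=\alpha-\partial\sigma$; since $\partial\mathcal{C}^1_L\subset\mathcal{C}^2_L\cap Z^2_\rho$, we still have $\alpha'\in Z^2_{L,\rho}(\h,\h^\ast)$. Applying Lemma $\ref{isofromLrhotorho}$ (with its general and Lagrangian one-cochains taken to be our $\mu$ and $\sigma$ respectively) yields a Lie-algebra isomorphism
\begin{equation*}
\Psi:\G_{\nabla,\alpha'}\longrightarrow\G_{\nabla,\beta},\qquad(x,\xi)\longmapsto\bigl(x,\xi+(\mu-\sigma)(x)\bigr).
\end{equation*}

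Third, I compute $\Psi^\ast\omega_0$ directly. Using $(\ref{formlagra})$, expansion gives
\begin{equation*}
(\Psi^\ast\omega_0)(x+\xi_1,y+\xi_2)=\omega_0(x+\xi_1,y+\xi_2)+(\mu-\sigma)(y)(x)-(\mu-\sigma)(x)(y).
\end{equation*}
On the $\h\times\h^\ast$ and $\h^\ast\times\h^\ast$ slots only the $\omega_0$ term survives, while on $\h\times\h$ the extra term equals $-\chi(x,y)$ for $\chi:=(\sigma-\mu)-{}^t(\sigma-\mu)$, which manifestly lies in $\bigwedge^2\h^\ast$ (the symmetric contribution of $\sigma\in\mathcal{C}^1_L$ drops out under antisymmetrization, leaving $\chi$ determined by the antisymmetric part of $\mu$). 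Therefore $\Psi^\ast\omega_0$ is exactly $\omega_{[\alpha]_{L,\rho}}$, and $\Psi^{-1}\circ\Phi$ is the required symplectomorphism.

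The principal obstacle I expect is the first step: translating the mere isomorphism ``$\nabla^\omega\cong\nabla$'' into an honest identification of $\omega$ with $\omega_0$ on a Lagrangian extension over $(\h,\nabla)$ itself. This requires transporting the symplectic extension data along the given isomorphism of flat Lie algebras via the construction surrounding $(\ref{Iso18})$ in Proposition $\ref{LEiso}$. Once this reduction is made, the remaining steps are essentially formal bookkeeping.
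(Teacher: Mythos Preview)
Your proposal is essentially correct and follows the same approach as the paper: both arguments reduce to writing down the shift map $(x,\xi)\mapsto(x,\xi+(\sigma-\mu)(x))$ between two cohomologous Lagrangian extensions and computing $\Psi^\ast\omega_0$ directly, arriving at $-\chi(x,y)+\xi(x)$ exactly as in the paper's displayed computation. Your explicit invocation of Proposition~\ref{1-1 corress}, Lemma~\ref{pont}, and the map $(\ref{Iso18})$ to justify the initial symplectomorphism to $(\G_{\nabla,\beta},\omega_0)$ is in fact more careful than the paper's proof, which simply posits the cocycles $\beta=\alpha-\partial\sigma$ and $\alpha'=\alpha-\partial\mu$ and proceeds to the pullback; the one point you flag as the ``principal obstacle''---deducing $[\alpha]_\rho=[\beta]_\rho$ when $\nabla^\omega$ is only \emph{isomorphic} to $\nabla$ rather than equal---is indeed where both arguments are loosest, since the composite $\Phi$ may induce a nontrivial automorphism of $\h$, but this is the same imprecision present in the paper and is resolved in practice by absorbing that automorphism into the choice of representative.
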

\begin{proof}
Putting $\beta=\alpha-\partial\sigma$ and $\alpha^\prime=\alpha-\partial\mu$, where $\mu\in\mathcal{C}^1(\h,\h^\ast)$ , $\sigma\in\mathcal{C}^1_L(\h,\h^\ast)$ and $\alpha\in Z^2_{L,\rho}(\h,\h^\ast)$.  Two strongly polarized symplectic Lie algebras $F(\h, \nabla, \alpha_1)$ and $F(\h, \nabla, \alpha_2)$ give rise to isomorphic extensions over $\h$ if $[\alpha_1] = [\alpha_2] \in H_{L,\rho}^2(\h, \h^\ast)$; that is, if $ \alpha_2 = \alpha_1-\partial\mu$, for some $\mu \in\mathcal{C}^1_L(\h, \h^\ast)$. Similarly to Lemma $\ref{isomor1}$, it is easily verified that then the map
\begin{equation}
\Phi: \G_{\nabla,\alpha_2}\longrightarrow \G_{\nabla,\alpha_1},~~(x,\xi)\longmapsto (x,\xi+\mu(x))
\end{equation}
is the required isomorphism of Lagrangian extensions over $\h$. Let $\beta=\alpha_1$, according to Lemma \ref{isofromLrhotorho}, $\G_{\nabla,\beta}$ and $\G_{\nabla,\alpha_2}$ are isomorphic (as Lie algebras), we have then
\begin{align*}
\omega_{[\alpha]_{L,\rho}}(x,y+\xi)&=\omega_0\big(\Phi(x),\Phi(y+\xi)\big)\\
&=\omega_0\big(x+(\sigma-\mu)(x),y+\xi+(\sigma-\mu)(y)\big)\\
&=\xi(x)+\omega_0\big(x,(\sigma-\mu)(y)\big)+\omega_0\big((\sigma-\mu)(x),y\big)\\
&=\xi(x)+\omega_0\big((\sigma-\mu)(x)+(\sigma-\mu)^\ast(x),y\big)\\
&=-\chi(x,y)+\xi(x),
\end{align*}
where, $\chi(x,y)=(\sigma-\mu)(x)(y)+(\sigma-\mu)^\ast(x)(y)$, and  $(\sigma-\mu)^\ast=-^t (\sigma-\mu)$ .
\end{proof}

\subsection{Exact Lagrangian extension}
\begin{pr}\label{exactLagrang}
Let $(\G_{\nabla,\alpha},\omega_0)$ be a Lagrangian extension of 
flat Lie algebra $(\h, \nabla)$ with respect to $\alpha$. Then, $\omega_0$ is exact if and only if 
\begin{enumerate}
\item $(\h,\nabla)$ has a right-identity element ``$e$".
\item $\alpha(x,y)(e)=\gamma\big([x,y]\big)$, for all $x,y\in\h$~ for some $\gamma\in\h^\ast$.
\end{enumerate}
Moreover, the flat torsion-free connection ``$\ast$" on $(\G_{\nabla,\alpha},\omega_0)$ is given by
\begin{align*}
x\ast y&=\nabla_xy+\alpha(x,\cdot)(y),\\
\xi_1\ast\xi_2&=0,\\
\xi\ast x&=\xi\circ(\nabla_x-\mathrm{ad}_x),\\
x\ast\xi&=-\xi\circ\mathrm{ad}_x,\end{align*}
for all $x,y\in\h$~ and~ $\xi,\xi_1,\xi_2\in\h^\ast$. 
\end{pr}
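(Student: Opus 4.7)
The plan is to use the equivalence $\om_0 = d\mu$ for some $\mu\in\G_{\nabla,\alpha}^\ast$ and to exploit the canonical vector-space decomposition $\G_{\nabla,\alpha}=\h\oplus\h^\ast$ coming from the Lagrangian extension. Any linear form on $\G_{\nabla,\alpha}$ is uniquely of the shape $\mu(x+\xi)=-\gamma(x)+\xi(e)$ for some $\gamma\in\h^\ast$ and some $e\in\h$ (via $\h^{\ast\ast}\cong\h$). I then test the identity
\begin{equation*}
\om_0(u,v)=(d\mu)(u,v)=-\mu([u,v]_\G)
\end{equation*}
separately on pairs $(u,v)\in\h\times\h$, $\h\times\h^\ast$ and $\h^\ast\times\h^\ast$.

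On $\h^\ast\times\h^\ast$ both sides vanish because $\h^\ast$ is an abelian ideal in $\G_{\nabla,\alpha}$. On $\h\times\h^\ast$, combining (\ref{Liebracket2}) with (\ref{reprenew}) gives $d\mu(x,\xi)=\xi(\nabla_{x}e)$, while $\om_0(x,\xi)=\xi(x)$ by (\ref{formlagra}); this forces $\nabla_{x}e=x$ for every $x\in\h$, which is exactly statement~(1). On $\h\times\h$, combining (\ref{Liebracket1}) with the evaluation of $\mu$ produces $d\mu(x,y)=\gamma([x,y]_\h)-\alpha(x,y)(e)$, and comparing with $\om_0(x,y)=0$ yields $\alpha(x,y)(e)=\gamma([x,y]_\h)$, i.e.\ statement~(2). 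The converse is immediate: once (1) and (2) hold, the prescription $\mu(x+\xi)=-\gamma(x)+\xi(e)$ defines a primitive of $\om_0$, which proves the equivalence.

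For the ``moreover'' part, I plug each of the four argument types $(x,y)$, $(x,\xi)$, $(\xi,x)$, $(\xi_1,\xi_2)$ into the defining relation $\om_0(u\ast v,w)=-\om_0(v,[u,w]_\G)$ and isolate the $\h$- and $\h^\ast$-components of $u\ast v$ by testing $w$ successively in $\h$ and in $\h^\ast$. Three of the four cases fall out directly from (\ref{Liebracket1})--(\ref{Liebracket2}) and (\ref{formlagra}), producing $x\ast y=\nabla_{x}y+\alpha(x,\cdot)(y)$, $x\ast\xi=-\xi\circ\ad_{x}$ and $\xi_1\ast\xi_2=0$. The only step requiring a small additional ingredient is $\xi\ast x$: the direct computation first yields $(\xi\ast x)(z)=\xi(\nabla_{z}x)$, and one then rewrites this via the torsion-free identity $\nabla_{z}x=\nabla_{x}z-\ad_{x}z$ to obtain $\xi\circ(\nabla_{x}-\ad_{x})$, matching the stated formula. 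I expect the main difficulty to be purely bookkeeping, keeping track of the signs arising from $\om_0(\xi,z)=-\xi(z)$ and from $\rho(x)\xi=-\xi\circ\nabla_{x}$, rather than any genuine obstacle.
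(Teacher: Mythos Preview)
Your proof is correct. For the ``moreover'' formulas you and the paper carry out essentially the same computation: decompose $u\ast v$ into its $\h$- and $\h^\ast$-components and test the defining relation $\omega_0(u\ast v,w)=-\omega_0(v,[u,w]_\G)$ against $w\in\h$ and $w\in\h^\ast$ separately; the paper also singles out the torsion-free rewriting $\nabla_z x=\nabla_x z-[x,z]$ in the $\xi\ast x$ case, just as you do.

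For the exactness equivalence, however, you take a genuinely more direct route. The paper first establishes the four $\ast$-formulas, then invokes the characterization from \cite{MR} that $\omega_0$ is exact if and only if the left-symmetric product $\ast$ admits a right-identity element $e'=e+\gamma\in\h\oplus\h^\ast$, and finally unpacks the conditions $x\ast e'=x$ and $\xi\ast e'=\xi$ via those formulas to extract (1) and (2). You bypass this external citation entirely: you parametrize an arbitrary linear form as $\mu(x+\xi)=-\gamma(x)+\xi(e)$ and test $\omega_0=d\mu$ directly on the three types of pairs. Your argument is more self-contained and slightly shorter; the paper's route has the merit of making explicit the correspondence between the primitive $\mu$ and the right-identity of $\ast$, which is thematically useful since the classification in the rest of the paper is organized around right-identity elements of three-dimensional flat Lie algebras.
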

\begin{proof}
It is known that (see \cite{Y} and \cite{MR}) given a symplectic Lie algebra $(\G_{\nabla,\alpha},\omega_0)$, the product given by
\begin{equation*}
\omega_0\big((x+\xi_1)\ast(y+\xi_2),z+\xi\big)=-\omega_0\big(y+\xi_2,[x+\xi_1,z+\xi]\big),~~\forall x,y,z\in\h,~\xi,\xi_1,\xi_2\in\h^\ast
\end{equation*}
induces a flat torsion-free connection $``\ast"$ on $\G_{\nabla,\alpha}$ that satisfies \[ (x+\xi_1)\ast(y+\xi_2)-(y+\xi_2)\ast(x+\xi_1)=[x+\xi_1,y+\xi_2] .\]

Let $x,y,z\in\h$~ and ~$\xi,\xi_1,\xi_2\in\h^\ast$ , we have
\begin{equation*}
x\ast y= (x\ast y)_{|\h}+(x\ast y)_{|\h^\ast}.
\end{equation*}
Then,
\begin{align}\label{F1}
\omega_0\big((x\ast y)_{|\h},z\big)&=-\omega_0\big(y,[x,z]\big)\nonumber\\
&=-\omega_0\big(y,[x,z]_{\h}+\alpha(x,z)\big)\\
&=-\alpha(x,z)(y)\nonumber
\end{align}
and
\begin{align}\label{F2}
\omega_0\big((x\ast y)_{|\h^\ast},z\big)&=-(x\ast y)_{|\h^\ast}(z).
\end{align}
On the other hand,
\begin{eqnarray}\label{F3}
\omega_0\big((x\ast y)_{|\h^\ast},\xi\big)=-\omega_0\big(y,[x,\xi]\big)=\omega_0\big(y,\xi\circ\nabla_x\big)=
\xi\big(\nabla_xy\big).
\end{eqnarray}
From $(\ref{F1})$, $(\ref{F2})$ and $(\ref{F3})$, we obtain
\begin{align*}
x\ast y&=\nabla_x y+\alpha(x,\cdot)(y),~\forall x,y\in\h.
\end{align*}
Now we  define 
\begin{align*}
\xi_1\ast\xi_2&=(\xi_1\ast \xi_2)_{|\h}+(\xi_1\ast \xi_2)_{|\h^\ast}.
\end{align*}
Then,
\begin{equation}\label{F4}
(\xi_1\ast \xi_2)_{|\h^\ast}(x)=
-\omega_0\big((\xi_1\ast \xi_2)_{|\h^\ast},x\big)=\omega_0\big(\xi_2,[\xi_1,x]\big)=0
\end{equation}
and
\begin{equation}\label{F5}
\xi\big((\xi_1\ast \xi_2)_{|\h}\big)=
\omega_0\big((\xi_1\ast \xi_2)_{|\h},\xi)=-\omega_0\big(\xi_2,[\xi_1,\xi]\big)=0.
\end{equation}
From $(\ref{F4})$ and $(\ref{F5})$, we have $\xi_1\ast\xi_2=0$ for all $\xi_1,\xi_2\in\h^\ast$.

Let us consider,
\begin{align*}
\xi\ast x&=(\xi\ast x)_{|\h}+(\xi\ast x)_{|\h^\ast}.
\end{align*}
Then, we have
\begin{eqnarray}\label{F6}
\xi_1\big((\xi\ast x)_{|\h}\big)=\omega_0\big((\xi\ast x)_{|\h},\xi_1\big)=-\omega_0\big(x,[\xi,\xi_1]\big)=0
\end{eqnarray}
and
\begin{eqnarray}\label{F7}
\begin{split}
(\xi\ast x)_{|\h^\ast}(y)=-\omega_0\big((\xi\ast x)_{|\h^\ast},y\big)=\omega_0\big(x,[\xi,y]\big)=\xi\circ(\nabla_x-\mathrm{ad}_x)(y).
\end{split}
\end{eqnarray}
From $(\ref{F6})$ and $(\ref{F7})$, we obtain
\begin{equation*}
\xi\ast x=\xi\circ(\nabla_x-\mathrm{ad}_x),~~\forall x\in\h, \xi\in\h^\ast.
\end{equation*}
Let us consider,
\begin{align*}
x\ast \xi&=(x\ast \xi)_{|\h}+(x\ast \xi)_{|\h^\ast},
\end{align*}
\begin{eqnarray}\label{F8}
(x\ast \xi)_{|\h^\ast}(y)=-\omega_0\big((x\ast \xi)_{|\h^\ast},y\big)
=\omega_0\big(\xi,[x,y]+\alpha(x,y)\big)=-(\xi\circ\mathrm{ad}_x)(y).
\end{eqnarray}
Finally,
\begin{eqnarray}\label{F9}
\xi_1\big((x\ast \xi)_{|\h}\big)=\omega_0\big((x\ast \xi)_{|\h},\xi_1\big)=-\omega_0\big(\xi,[x,\xi_1]\big)=0.
\end{eqnarray}
From $(\ref{F8})$ and $(\ref{F9})$, we have
\begin{equation*}
x\ast\xi=-\xi\circ\mathrm{ad}_x,~\forall x\in\h, \xi\in\h^\ast.
\end{equation*}
From \cite{MR}. The Lagrangian extension $(\G_{\nabla,\alpha},\omega_0)$ is a Frobenius Lie algebra if and only if the corresponding flat torsion-free connection ``$\ast$" has a right-identity element.

Let $e^\prime=e^\prime_{|\h}+e^\prime_{|\h^\ast}$ be the right-identity element of ``$\ast$". We have
\begin{align*}
x\ast (e^\prime_{|\h}+e^\prime_{|\h^\ast})&=\nabla_xe^\prime_{|\h}+\alpha(x,\cdot)(e^\prime_{|\h})-e^\prime_{|\h^\ast}\circ\mathrm{ad}_x=x,\\
\xi\ast (e^\prime_{|\h}+e^\prime_{|\h^\ast})&=\xi\circ(\nabla_{e^\prime_{|\h}}-\mathrm{ad}_{e^\prime_{|\h}})=\xi.
\end{align*} 
This implies that,
\begin{equation}
\nabla_xe^\prime_{|\h}=x,~~~\text{and}~~~\alpha(x,\cdot)(e^\prime_{|\h})=e^\prime_{|\h^\ast}\circ\mathrm{ad}_x
\end{equation}
and
\begin{equation}
\varrho(e^\prime_{|\h})\xi=\xi,
\end{equation}
where, $\varrho(x)=\nabla_x-\mathrm{ad}_x$ is the right-multiplication by $x$ in the flat Lie algebra $(\h,\nabla)$. Therefore, we have $e^\prime_{|\h}:=e$ and $e^\prime_{|\h^\ast}:=\gamma$.
\end{proof}
A symplectic Novikov Lie algebra (\textit{SNLA}), is a symplectic Lie algebra $(\G,\omega)$ for which the associated flat torsion-free connection ``$\ast$" is Novikov. i.e., $(z\ast y)\ast x=(z\ast x)\ast y$, for  all $x,y,z\in\G$ \cite{AM2}.
\begin{pr}
The Lagrangian extension $(\G_{\nabla,\alpha},\omega_0)$ is an SNLA if and only if  the following conditions are satisfied:
\begin{enumerate}
\item  $(\h,\nabla)$ is Novikov associative flat Lie algebra.
\item $\alpha(zy,\cdot)(x)+\alpha(z,\cdot)(y)\circ\varrho(x)=\alpha(zx,\cdot)(y)+\alpha(z,\cdot)(x)\circ\varrho(y)$,
\end{enumerate}
for all $x,y,z\in\h$, $\xi\in\h^\ast$.
\end{pr}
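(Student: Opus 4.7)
The plan is to unwind the Novikov identity $(z\ast y)\ast x=(z\ast x)\ast y$ on the Lagrangian extension $\G_{\nabla,\alpha}=\h\oplus\h^\ast$ by substituting the explicit formulas for ``$\ast$'' given by Proposition \ref{exactLagrang}, and then proceed by case analysis on whether each of $x,y,z$ lies in $\h$ or in $\h^\ast$. Since $\xi\ast\eta=0$ for all $\xi,\eta\in\h^\ast$, every one of the eight sign-combinations in which two or three of the arguments live in $\h^\ast$ collapses to $0=0$, so only a handful of cases carry content.

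Writing the LSA product on $\h$ as $xy:=\nabla_x y$, I would start with the main case $x,y,z\in\h$: the expansion
\[
(z\ast y)\ast x=(zy)x+\alpha(zy,\cdot)(x)+\alpha(z,\cdot)(y)\circ\varrho(x),
\]
and the analogous expression for $(z\ast x)\ast y$, yield upon comparing $\h$-components the Novikov identity $(zy)x=(zx)y$ on $\h$, and upon comparing $\h^\ast$-components exactly the cocycle identity (2) of the statement. I would then treat the two genuinely mixed cases: for $z,y\in\h$ and $x=\xi\in\h^\ast$, the formulas $w\ast\xi=-\xi\circ\mathrm{ad}_w$ and $\xi\ast w=\xi\circ\varrho(w)$ reduce the Novikov identity to $[zy,u]_\h=[z,uy]_\h$ for every $u\in\h$; the symmetric case $y=\eta\in\h^\ast$ gives the same condition. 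The remaining case $z=\zeta\in\h^\ast$, $y,x\in\h$ collapses to $\zeta((ux)y)=\zeta((uy)x)$, i.e.\ Novikov on $\h$ again, and is therefore redundant.

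The conjunction of Novikov on $\h$ and the mixed-case identity $[zy,u]=[z,uy]$ is, granted the Novikov rule, equivalent to associativity of the LSA on $\h$: rewriting $(zy)u=(zu)y$ and $(uy)z=(uz)y$ under Novikov, a direct manipulation gives
\[
[zy,u]-[z,uy]=2\bigl((uz)y-u(zy)\bigr),
\]
so that the bracket identity forces $(uz)y=u(zy)$. Hence the Novikov property of $\G_{\nabla,\alpha}$ is equivalent to $(\h,\nabla)$ being a Novikov associative flat Lie algebra together with the $\alpha$-identity (2), and the converse is immediate by reversing the case-by-case substitution. The main obstacle is the careful bookkeeping in the mixed cases and, in particular, the short but non-obvious reduction showing that, under Novikov, the identity $[zy,u]=[z,uy]$ is not automatic but forces associativity; the remaining computations are routine expansions of the product formulas in Proposition \ref{exactLagrang}.
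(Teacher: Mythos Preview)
Your proposal is correct and follows essentially the same approach as the paper: both perform a case analysis on which of $x,y,z$ lie in $\h$ or $\h^\ast$, extract Novikov and condition (2) from the pure-$\h$ case, reduce the mixed case $(z\ast y)\ast\xi=(z\ast\xi)\ast y$ to $\mathrm{ad}_{zy}=\mathrm{ad}_z\circ\varrho(y)$, and then use Novikov together with left-symmetry to rewrite this as $2\,(u,z,y)=0$, i.e.\ associativity. Your reduction $[zy,u]-[z,uy]=2\bigl((uz)y-u(zy)\bigr)$ is exactly the paper's computation in compressed form, and your observation that the remaining cases are either redundant or trivially $0=0$ matches the paper as well.
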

\begin{proof}
Let $x,y,z\in\h$,  $\xi\in\h^\ast$, and $xy:=\nabla_xy$. Assume that ``$\ast$" is Novikov, we have
\begin{align*}
(z\ast y)\ast x=(z\ast x)\ast y&\Leftrightarrow \big(zy+\alpha(z,\cdot)(y)\big)\ast x=\big(zx+\alpha(z,\cdot)(x)\big)\ast y\\
&\Leftrightarrow (zy)x+\alpha(zy,\cdot)(x)+\alpha(z,\cdot)(y)\circ\varrho(x)=(zx)y+\alpha(zx,\cdot)(y)+\alpha(z,\cdot)(x)\circ\varrho(y)\\
&\Leftrightarrow \begin{cases}[\varrho(x),\varrho(y)]=0,\\
\alpha(zy,\cdot)(x)+\alpha(z,\cdot)(y)\circ\varrho(x)=\alpha(zx,\cdot)(y)+\alpha(z,\cdot)(x)\circ\varrho(y).
\end{cases}
\end{align*}
and
\begin{align*}
(x\ast y)\ast \xi=(x\ast \xi)\ast y&\Leftrightarrow \big(xy+\alpha(x,\cdot)(y)\big)\ast\xi=\big(-\xi\circ\mathrm{ad}_x\big)\ast y\\
&\Leftrightarrow-\xi\circ\mathrm{ad}_{xy}=-\xi\circ\mathrm{ad}_x\circ\varrho(y)\\
&\Leftrightarrow\xi\circ(\mathrm{ad}_{xy}-\mathrm{ad}_x\circ\varrho(y)=0.
\end{align*}
On the other hand, 
\begin{align*}
\big(\mathrm{ad}_{xy}-\mathrm{ad}_x\circ\varrho(y)\big)(z)=0&\Leftrightarrow(xy)z-z(xy)=x(zy)-(zy)x\\
&\Leftrightarrow
(xz)y-z(xy)=x(zy)-(zy)x\\
&\Leftrightarrow (xz)y-x(zy)+(zy)x-z(xy)=0
\\
&\Leftrightarrow (xz)y-x(zy)+(zx)y-z(xy)=0\\
&\Leftrightarrow 2\,(x,z,y)=0.
\end{align*}
It is straightforward to show that the condition $(\xi\ast x)\ast y=(\xi\ast y)\ast x$ is equivalent to
\begin{equation*}
\xi\circ[\varrho(x),\varrho(y)]=0,
\end{equation*}
for all $x,y\in\h$ and $\xi\in\h^\ast$.
\end{proof}

\begin{remark}
Note that every flat Lie algebra has at least one Lagrangian
extension, using the zero-cocycle $\alpha\equiv0$.   This is the semi-direct product Lagrangian extension $(\h\oplus_\nabla \h^\ast)$ or cotangent Lagrangian extension.  In this case, the cotangent Lagrangian extension is
\begin{enumerate}
\item  Frobenius Lie algebra if and only if $(\h,\nabla)$ has a right-identity element.
\item  An SNLA if and only if $(\h,\nabla)$ is Novikov associative flat algebra.
\item Complete if and only if $(\h,\nabla)$ is complete and $\mathcal{I}m(\nabla_x)\subset\ker\big(\varrho^{k-1}(x)\big)$,
 \end{enumerate}
where, $k>1$ is the index nilpotency  of $\varrho(x)$.
\end{remark}
\begin{pr}
Let $(\G_{\nabla,\alpha},\omega)$ be a Lagrangian extension of 
flat Lie algebra $(\h, \nabla)$ with respect to $\alpha$. Assume that, $\G_{\nabla,\alpha}$ is Frobeniusian, we have
\begin{enumerate}
\item If $\dim\h=3$. Then, $\G_{\nabla,\alpha}$  is non-nilpotent solvable Lie algebra.
\item If $\dim\h\geq4$. Then, $\G_{\nabla,\alpha}$ is non-nilpotent solvable Lie algebra or $\G_{\nabla,\alpha}$  has non-trivial Levi-Malcev decomposition. 
\end{enumerate}
\end{pr}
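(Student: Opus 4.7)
The plan is to establish both parts by combining two ingredients: a general non-nilpotency argument valid whenever $\omega$ is exact, and a dimension-plus-trace argument that rules out a non-trivial Levi component of $\G_{\nabla,\alpha}$ when $\dim\h=3$.

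First I would prove that $\G_{\nabla,\alpha}$ has trivial center. Writing $\omega=\mathrm{d}\mu$, any central element $z$ satisfies $\omega(z,\cdot)=-\mu([z,\cdot])=0$, forcing $z=0$ by non-degeneracy. This rules out nilpotency in both cases, since nilpotent Lie algebras have non-trivial center. For part (2), the Levi-Malcev decomposition $\G_{\nabla,\alpha}=\s\ltimes\mathrm{rad}(\G_{\nabla,\alpha})$ then yields the required dichotomy directly: either $\s=0$ and $\G_{\nabla,\alpha}$ is non-nilpotent solvable, or $\s\neq 0$ and $\G_{\nabla,\alpha}$ has non-trivial Levi component. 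Nothing more is needed in dimensions $\geq 10$.

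For part (1), the additional task is to forbid $\s\neq 0$ when $\dim\h=3$. I would exploit the fact that $\h^\ast\subset\G_{\nabla,\alpha}$ is an abelian ideal of dimension $3$, hence contained in $\mathrm{rad}(\G_{\nabla,\alpha})$. Since $\s\cap\h^\ast$ is an abelian ideal of the semisimple $\s$, it must be $\{0\}$, giving $\dim\s\leq 6-3=3$; and as the smallest real semisimple Lie algebra is $3$-dimensional, $\dim\s=3$. The induced map $\s\hookrightarrow\G_{\nabla,\alpha}/\h^\ast=\h$ is then an isomorphism, forcing $\h\cong\msl$ or $\h\cong\mso$.

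The hard step, and the one I expect to be the main obstacle, is then to derive a contradiction from the right-identity on this semisimple $\h$. By Proposition~\ref{exactLagrang}, $(\h,\nabla)$ admits a right-identity $e$ with $\nabla_x e=x$, so $R_e=\mathrm{Id}_\h$; torsion-freeness yields $L_e=\ad_e+\mathrm{Id}_\h$, where $L_x=\nabla_x$. Flatness makes $L:\h\to\mathrm{End}(\h)$ a Lie algebra representation, and the identity $L_x(e)=x$ shows it is faithful. Since $\h$ is semisimple one has $\h=[\h,\h]$, so every representation has traceless image; in particular $\tr(L_e)=0$. But $\tr(L_e)=\tr(\ad_e)+\dim\h=0+3=3$, a contradiction. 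Therefore $\s=0$, $\G_{\nabla,\alpha}$ is solvable, and combined with the non-nilpotency step this yields (1).
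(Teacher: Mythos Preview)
Your argument is correct and takes a different route from the paper. The paper simply invokes the classical fact (due to Helmstetter, cited later as \cite{Hel}) that no semisimple---indeed no perfect---Lie algebra carries a left-symmetric structure; since $\h$ comes equipped with the flat torsion-free connection $\nabla$, this immediately excludes $\h\cong\msl$ or $\mso$ and forces $\h$, hence $\G_{\nabla,\alpha}$, to be solvable. Non-nilpotency is asserted in one line (``Frobeniusian, hence not unimodular''), and part~(2) is the same Levi dichotomy you give.

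Your contribution is to make the exclusion of semisimple $\h$ self-contained: after reducing to $\h\cong\s$ semisimple, you use the right-identity $e$ (from Proposition~\ref{exactLagrang}) and the representation $L=\nabla$ to compute $\tr L_e=\tr\ad_e+3=3$, contradicting $\tr L_x=0$ on $[\h,\h]=\h$. This is essentially a streamlined instance of the Helmstetter argument, specialised to the case where a right-identity exists. One small point worth noting: your invocation of Proposition~\ref{exactLagrang} presumes that the Lagrangian-extension form $\omega_0$ itself is exact, not merely that the Lie algebra is Frobeniusian for some form. In the paper's context this is the intended reading; if you want to be fully robust, observe that on a semisimple $\h$ Whitehead's first lemma forces any LSA to have a right-identity anyway (the identity map is a $1$-cocycle for $L$, hence a coboundary), so the trace contradiction goes through without appealing to exactness of $\omega_0$. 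Your center argument for non-nilpotency is cleaner and more explicit than the paper's one-line assertion.
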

\begin{proof}
There is no flat Lie algebra (left-symmetric structure) on semisimple Lie algebra as is well known. If $\dim\h=3$, hence, over the real
field $\R$, besides 3-dimensional simple Lie algebras $\mathfrak{sl}_2(\R)$ and $\mathfrak{so}(3)$, up to isomorphisms, there are seven solvable Lie algebras. Assuming $\G_{\nabla,\alpha}$ is Frobeniusian, $\G_{\nabla,\alpha}$ has to be non-nilpotent (in general, not unimodular), so it is non-nilpotent solvable Lie algebra. We have two types of Lie algebras when $\dim\h=4$, the first of which is solvable, in this case $\G_{\nabla,\alpha}$ is necessarily solvable, and the second  is reductive, i.e., $\mathfrak{sl}_2(\R)\oplus\R e_4$ and $\mathfrak{so}(3)\oplus\R e_4$   and admits a flat torsion-free connections (see Theorem $1$, \cite{AM}). As $\G_{\nabla,\alpha}$ consists of a semisimple Lie algebra and is symplectic (then, $ \mathfrak{rad}(\G_{\nabla,\alpha})\neq0)$, it has a Levi-Malcev decomposition. When $\dim\h>4$, a consequence of what precedes follows immediately.

\end{proof}
\subsection{Bijective $1$-Cocycles}
\begin{Def}
Let $\G$ be a Lie algebra and $\rho: \G\longrightarrow\mathfrak{gl}(V)$ be a representation of $\G$.
A $1$-cocycle $\mathfrak{q}$ associated to $\rho$ is defined as a linear map from $\G$ to $V$ satisfying
\begin{equation}\label{cocydefi}
\mathfrak{q}\big([x,y]\big)=\rho(x)\mathfrak{q}(y)-\rho(y)\mathfrak{q}(x),~~\forall x,y\in\G.
\end{equation}
We denote it by $(\rho,\mathfrak{q})$. In addition, if $\mathfrak{q}$ is a linear isomorphism $($thus $\dim V =
\dim\G)$ , $(\rho,\mathfrak{q})$ is said to be bijective.
\end{Def}
Let $(\rho,\mathfrak{q})$ be a bijective $1$-cocycle, then it is easy to see that
\begin{equation}
x\ast y=\mathfrak{q}^{-1}\big(\rho(x)\mathfrak{q}(y)\big),~x,y\in\G
\end{equation}
defines a flat torsion-free connection on $\G$ (Medina, \cite{Medina1}).  There exists a bijection between the set of the isomorphism classes of bijective 1-cocycles of $\G$ and the set of flat Lie algebras on $\G$ (see, \cite{Bai},  Theorem 2.1).

\begin{Le}\label{dimh=5}
There is no flat torsion-free connections  on the following $5$-dimensional Lie algebras$:$
\begin{center}
$\mathfrak{so}(3)\oplus\R^2,~~\mathfrak{sl}_2(\R)\oplus\R^2,~~\mathfrak{so}(3)\oplus\mathfrak{aff}(1,\R),~~\mathfrak{sl}_2(\R)\oplus\mathfrak{aff}(1,\R)$~and~$2L_1\rtimes\mathfrak{sl}_2(\R)$.
\end{center}

\end{Le}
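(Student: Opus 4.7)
The plan is to use the bijective $1$-cocycle description of flat torsion-free connections recalled just above: a flat torsion-free connection on $\G$ is equivalent to a pair $(\rho,\mathfrak{q})$ consisting of a representation $\rho:\G\to\mathfrak{gl}(V)$ and a bijective $1$-cocycle $\mathfrak{q}:\G\to V$, with $\dim V=5$. My approach is to assume such $(\rho,\mathfrak{q})$ exists on a five-dimensional $V$ for each of the listed Lie algebras and derive a contradiction from the bijectivity of $\mathfrak{q}$.

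The first step is Whitehead's first lemma. Writing $\G=\s\oplus\mathfrak{r}$ with $\s$ the semisimple Levi subalgebra (either $\mso$ or $\msl$, of dimension $3$) and $\mathfrak{r}$ the radical, one has $H^1(\s,V)=0$, so the restricted cocycle $\mathfrak{q}|_{\s}$ is a coboundary: there exists $v_0\in V$ with $\mathfrak{q}(x)=\rho(x)v_0$ for every $x\in\s$. Injectivity of $\mathfrak{q}|_{\s}$ forces the $\s$-isotropy of $v_0$ to be trivial, so the orbit $\rho(\s)v_0$ is a three-dimensional $\s$-submodule of $V$. For the direct-sum algebras (where $[\s,\mathfrak{r}]=0$), the image $\rho(\mathfrak{r})$ commutes with $\rho(\s)$, so Schur's lemma puts it inside $\mathrm{End}_{\mathfrak{s}}(V)$.

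The core is then a case analysis on the decomposition of $V$ as an $\s$-module. For $\s=\mso$ the real irreducibles have odd dimension, so only $V_1^{\oplus 5}$, $V_3\oplus V_1^{\oplus 2}$ or $V_5$ occur. The first two are ruled out at once since no vector has trivial isotropy. In the remaining case $V=V_5$, Schur gives $\mathrm{End}_{\mathfrak{s}}(V)=\R\cdot\mathrm{Id}$, so $\rho(y)=c(y)\mathrm{Id}$ for $y\in\mathfrak{r}$; the cocycle identity applied to $[x,y]=0$ for $x\in\s$ and $y\in\mathfrak{r}$ yields $\rho(x)(\mathfrak{q}(y)-c(y)v_0)=0$ for every $x\in\s$, and since $V_5$ has no nonzero $\s$-invariant, $\mathfrak{q}(y)=c(y)v_0$, forcing $\mathfrak{q}(\mathfrak{r})\subseteq\R v_0$, which contradicts injectivity on the two-dimensional $\mathfrak{r}$. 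The same scalar argument covers $\mso\oplus\mathfrak{aff}(1,\R)$: the relation $[r_1,r_2]=r_2$ would force $\rho(r_2)=[\rho(r_1),\rho(r_2)]=0$, and then $\mathfrak{q}(r_2)=0$.

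The delicate part is the $\msl$-cases, because real $\msl$-irreducibles exist in every dimension, producing several possible decompositions of $V$: $V_5$, $V_4\oplus V_1$, $V_3\oplus V_2$, $V_3\oplus V_1^{\oplus 2}$, $V_2^{\oplus 2}\oplus V_1$, and smaller combinations. For each one my plan is to combine the trivial-isotropy condition on $v_0$ with the precise shape of $\mathrm{End}_{\mathfrak{s}}(V)$ coming from Schur and the cocycle equations linking $\mathfrak{q}(\mathfrak{r})$ to $v_0$, and to show that together they force $\mathfrak{q}(\mathfrak{r})$ into a subspace of dimension strictly less than $\dim\mathfrak{r}$. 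For the semidirect algebra $2L_1\rtimes\msl$, where $\mathfrak{r}=\R^2$ is the standard $\msl$-module $V_2$, the brackets $[x,y]\neq 0$ between $\s$ and $\mathfrak{r}$ contribute nonzero terms to the cocycle identity $\mathfrak{q}([x,y])=\rho(x)\mathfrak{q}(y)-\rho(y)\mathfrak{q}(x)$, which supply additional rigidity. I expect the tightest cases to be $V_4\oplus V_1$ and $V_3\oplus V_2$ for the direct-sum algebras, and $V_5$ for the semidirect one, where the cocycle identities and the dimension count must be pushed together most carefully.
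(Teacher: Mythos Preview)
Your Whitehead--Schur strategy is cleaner than the paper's route (which writes down two explicit $\mathfrak{sl}_2$-module types, fixes $\nabla_{e_4}=\nabla_{e_5}=\mathrm{Id}$ without justification, and asserts that every cocycle has rank~$4$), but both founder on the same reef: the lemma as stated is \emph{false} for the four direct-sum algebras. The cleanest counterexample is that $M_2(\R)\times\R$ with componentwise multiplication is an associative --- hence left-symmetric --- algebra whose commutator Lie algebra is $\mathfrak{gl}_2(\R)\oplus\R\cong\mathfrak{sl}_2(\R)\oplus\R^2$; likewise $\mathbb{H}\times\R$ realises $\mathfrak{so}(3)\oplus\R^2$. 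For the $\mathfrak{aff}(1,\R)$ versions one checks directly that the $\mathfrak{s}$-module $V_4\oplus V_1$ (respectively $V_4^{\mathbb{R}}\oplus V_1$) carries a bijective $1$-cocycle: with $\rho(y_1)=\mathrm{Id}_V$, $\rho(y_2)=0$, and $v_0=(u,0)$ for $u$ of trivial isotropy satisfying $u\notin\rho_4(\mathfrak{s})u$, the cocycle conditions force $\mathfrak{q}(y_1)=(u,b)$ and $\mathfrak{q}(y_2)=(0,c)$ with $c$ free, and any $c\neq 0$ makes $\mathfrak{q}$ bijective. So precisely the ``tightest case'' $V_4\oplus V_1$ you flagged does not close, and the paper's enumeration omits it as well.

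There is also an earlier concrete gap in your $\mathfrak{so}(3)$ analysis: the real irreducibles of $\mathfrak{so}(3)\cong\mathfrak{su}(2)$ are \emph{not} all odd-dimensional. The standard $\C^2$-representation is of quaternionic type, giving a $4$-dimensional real irreducible $V_4^{\mathbb{R}}$ with $\mathrm{End}_{\mathfrak{so}(3)}(V_4^{\mathbb{R}})\cong\mathbb{H}$, in which every nonzero vector has trivial isotropy; thus $V_4^{\mathbb{R}}\oplus V_1$ survives your isotropy filter and is exactly where the counterexample lives. The only algebra in the list for which the claim actually holds is $2L_1\rtimes\mathfrak{sl}_2(\R)$: it is perfect, and the paper disposes of it in one line via Helmstetter's theorem --- simpler than the cocycle analysis you propose for it.
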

\begin{proof}
A Helmstetter result \cite{Hel} shows that a perfect Lie algebra (i.e., $[\G,\G]=\G)$ does not support a flat torsion-free connection. Then $2L_1\rtimes\mathfrak{sl}_2(\R)$ has no flat torsion-free connection.  Let $\{e_1,e_2,e_3\}\oplus\{e_4,e_5\}$ be a basis of $\mathfrak{s}\oplus \mathfrak{r}$, where $\mathfrak{s}=\mathfrak{so}(3)~\text{or}~\mathfrak{sl}_2(\R)$ and $\mathfrak{r}=\R^2~\text{or}~\mathfrak{aff}(1,\R)$.  Suppose that, $\h=\mathfrak{s}\oplus \mathfrak{r}$ has a flat torsion-free connection $\nabla$.  The first step is to classify all representations of $\h :$

If $\mathfrak{s}=\mathfrak{sl}_2(\R)$. We have only two possibilities for $\h$. In the first case, $\h$ is irreducible, and in the second case, $\h =V_1\oplus V_2$ , where $V_1$ (resp. $V_2$) (as an $\mathfrak{sl}_2(\R)$-module) is isomorphic to the $2$-dimensional (resp. $3$-dimensional) natural representation of $\mathfrak{sl}_2(\R)$.

 As matrices, $\nabla_{e_3}$ is similar to $\mathrm{diag}(-4, -2,0, 2,4)$ or to $\mathrm{diag}(1,-1,-3,-1,1)$
and $\nabla_{e_1}$, $\nabla_{e_2}$ are nilpotent. Indeed, if $\h$ is irreducible, it is (as an $\mathfrak{sl}_2(\R)$-module) a highest weight module with basis $v_i$ such that 
\begin{equation}\label{repre1}
		\nabla_{e_3} v_j=(-4+2j)v_j, \nabla_{e_1}v_j=v_{j+1}, \nabla_{e_2}v_j=j(5-j)v_{j-1} ~\text{and}~ \nabla_{e_2}v_0=0 ~\text{for}~ j=0,\ldots,4.
	\end{equation}
As a result of this basis, we are able to obtain the first part of our claim. It should be noted that this basis does not satisfy the torsion-free condition (i.e., $\nabla_xy-\nabla_yx=[x,y]$). In the second case,
choose a basis according to $V_1\oplus V_2$, where $V_j,~j=1,2$, is a highest weight module for $\mathfrak{sl}_2(\R)$.

If $\mathfrak{s}=\mathfrak{so}(3)$. The only possibility for $\h$ is that it is irreducible, it is (as an $\mathfrak{so}(3)$-module) a highest weight module with basis $v_i$ such that
\begin{eqnarray}\label{repre2}
&&\nabla_{e_1}v_1=\tfrac{1}{2}v_4,~\nabla_{e_1}v_2=-\tfrac{1}{2}v_3,~\nabla_{e_1}v_3=2v_2-v_5,~\nabla_{e_1}v_4=-2v_1,~\nabla_{e_1}v_5=3v_3,\nonumber\\
&&\nabla_{e_2}v_1=\tfrac{1}{2}v_3,~\nabla_{e_2}v_2=-\tfrac{1}{2}v_4,~\nabla_{e_2}v_3=-2v_1,~\nabla_{e_2}v_4=-2v_2-v_5,~\nabla_{e_2}v_5=3v_4,\\
&&\nabla_{e_3}v_1=2v_2,~\nabla_{e_3}v_2=-2v_1,~\nabla_{e_3}v_3=v_4,~\nabla_{e_3}v_4=-v_3,~\nabla_{e_3}v_5=0.\nonumber
\end{eqnarray}
We should note that, in any case, $\nabla_{e_4}v_i=v_i$  and $\nabla_{e_5}v_i=v_i$ (when $\mathfrak{r}=\R^2$) or $\nabla_{e_4}v_i=v_i$ and $\nabla_{e_5}v_i=0$,  (when $\mathfrak{r}=\mathfrak{aff}(1,\R)$) $i=1,\ldots,5$.

The second step is to classify all the 1-cocycles associated with each representation. However, a short calculation shows that any $1$-cocycle $\mathfrak{q}$ associated with any representation of $\h$ and satisfying $(\ref{cocydefi})$ has $\mathrm{rank}(\mathfrak{q}) = 4$ (that is, a bijective 1-cocycle $(\rho,\mathfrak{q})$ does not exist). Thus, $\mathfrak{s}\oplus\mathfrak{r}$ cannot admit a flat torsion-free connection.
\end{proof}
The following result shows that, any  $10$-dimensional symplectic  Lie algebra  admitting a Lagrangian ideal is necessarily solvable.
\begin{Le}\label{10is sol}
Let $(\G,\omega)$ be a  $10$-dimensional  symplectic  Lie algebra that has a Lagrangian ideal. Then $\G$ is solvable.
\end{Le}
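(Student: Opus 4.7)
The plan is to reduce the solvability of $\G$ to that of the five-dimensional quotient $\h = \G/\mathfrak{j}$ and then rule out the non-solvable possibilities for $\h$ using Lemma \ref{dimh=5}.

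First I would show that the Lagrangian ideal $\mathfrak{j}$ is abelian. For $x,y \in \mathfrak{j}$ and any $z \in \G$, the cocycle condition on $\om$ gives
\begin{equation*}
\om([x,y],z) + \om([y,z],x) + \om([z,x],y) = 0.
\end{equation*}
Since $\mathfrak{j}$ is an ideal, both $[y,z]$ and $[z,x]$ lie in $\mathfrak{j}$, and since $\mathfrak{j}$ is isotropic the last two terms vanish. Non-degeneracy of $\om$ then forces $[x,y]=0$. Hence $\mathfrak{j}$ is an abelian (in particular solvable) ideal, and consequently $\G$ is solvable if and only if the quotient $\h=\G/\mathfrak{j}$ is solvable.

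Next, by Proposition \ref{Pr1isomo} the five-dimensional quotient $\h$ inherits a flat torsion-free connection $\overline{\nabla}$. It therefore suffices to argue that no five-dimensional non-solvable real Lie algebra admits such a connection. By the standard classification of five-dimensional real Lie algebras, every non-solvable one is a Levi decomposition of the form $\s \oplus \mathfrak{r}$ or $V \rtimes \s$ with $\s$ a three-dimensional simple Lie algebra ($\mso$ or $\msl$); the full list is
\begin{equation*}
\mso\oplus\R^2,\ \msl\oplus\R^2,\ \mso\oplus\aff,\ \msl\oplus\aff,\ 2L_1\rtimes\msl .
\end{equation*}
Lemma \ref{dimh=5} states that none of these five algebras carries a flat torsion-free connection. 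This immediately forces $\h$ to be solvable, and hence $\G$ is solvable.

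The main obstacle would otherwise be the existence of exotic flat connections on non-solvable $5$-dimensional Lie algebras, but this is precisely ruled out by Lemma \ref{dimh=5}, whose proof relies on classifying the admissible representations of the Levi factor and checking that no associated $1$-cocycle is bijective. Given that lemma, the argument above is essentially a bookkeeping reduction using the existence of a Lagrangian ideal.
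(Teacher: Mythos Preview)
Your proof is correct and follows essentially the same route as the paper: reduce to the five-dimensional quotient $\h=\G/\mathfrak{j}$, observe that it carries a flat torsion-free connection, and invoke Lemma~\ref{dimh=5} to exclude the non-solvable candidates. Your argument is in fact slightly more explicit than the paper's, since you prove directly that the Lagrangian ideal is abelian (hence $\G$ is solvable iff $\h$ is), whereas the paper's proof asserts without detail that a non-trivial Levi factor of $\G$ must survive in $\h$; note also that the macro $\aff$ in your display expands to $\mathrm{aff}(2,\R)$ in this paper, so you should write $\mathfrak{aff}(1,\R)$ there.
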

\begin{proof}
Let $(\G=\mathfrak{s}\ltimes\mathfrak{r},\omega)$ be a 10-dimensional exact symplectic  Lie algebra, $\mathfrak{j}$ be a Lagrangian ideal of $(\G,\omega)$. Then, $(\G,\omega)$ is a Lagrangian extension of a  $5$-dimensional flat Lie algebra. Recall (from Proposition 1.3.1, \cite{BV}) that the associated flat torsion-free
connection $\nabla$ on the quotient Lie algebra $\h = \G/\mathfrak{j}$ satisfies the relation
\begin{equation}
\omega_\h(\nabla_{\overline{x}}\overline{y},a)=-\omega(y,[x,a]),~\text{for all}~\overline{x},\overline{y}\in\h,~a\in\mathfrak{j}.
\end{equation}
 Since $\G$ has a non-trivial Levi-Malcev decomposition, the flat Lie algebra $(\h,\nabla)$ (as a Lie algebra) must contain a semisimple part. It follows that $\h\cong\mathfrak{s}^\prime\oplus\mathfrak{d}$ where $\mathfrak{s}^\prime$ is a three-dimensional semisimple Lie algebra, and $\mathfrak{d}$ is a two-dimensional Lie algebra, or $\h$ has non-trivial Levi-Malcev decomposition, in this case $\h\cong2L_1\rtimes\mathfrak{sl}_2(\R)$. By Lemma \ref{dimh=5}, these forms of Lie algebras do not support a flat torsion-free connection, which implies that, $\mathfrak{s}\equiv0$.

\end{proof}

Together with Lemma \ref{10is sol}, we therefore have:

\begin{theo}
There is no Lagrangian ideal for any $10$-dimensional symplectic non-solvable Lie algebra.
\end{theo}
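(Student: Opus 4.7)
The plan is to observe that this theorem is simply the contrapositive of Lemma \ref{10is sol}, so the proof should be essentially a one-line argument. I would proceed by contradiction: suppose that $(\G,\omega)$ is a $10$-dimensional symplectic non-solvable Lie algebra which admits a Lagrangian ideal $\mathfrak{j}$. Then the hypotheses of Lemma \ref{10is sol} are satisfied, which forces $\G$ to be solvable, contradicting the non-solvability assumption.

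Since the entire content is packaged inside Lemma \ref{10is sol}, I would take care to explicitly recall how that lemma itself is established, so that the reader sees the theorem is a genuine consequence rather than a tautological restatement. The key mechanism is the Lagrangian reduction procedure of Proposition \ref{Pr1isomo}: a Lagrangian ideal $\mathfrak{j}$ in a $10$-dimensional symplectic Lie algebra produces a $5$-dimensional quotient $\h = \G/\mathfrak{j}$ equipped with a flat torsion-free connection. If $\G$ has nontrivial Levi-Malcev decomposition $\G = \s \ltimes \mathfrak{r}$, then the semisimple part $\s$ must survive in $\h$ (since $\mathfrak{j} \subset \mathfrak{r}$ because any Lagrangian ideal of a symplectic Lie algebra lies in the radical), yielding $\h$ isomorphic to one of the five $5$-dimensional Lie algebras containing a semisimple factor, namely $\so(3)\oplus\R^2$, $\msl\oplus\R^2$, $\so(3)\oplus\aff$, $\msl\oplus\aff$, or $2L_1\rtimes\msl$.

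The main obstacle — already handled in Lemma \ref{dimh=5} — is ruling out flat torsion-free connections on these five algebras. The $2L_1\rtimes\msl$ case is immediate by Helmstetter's theorem since it is perfect. For the remaining four, one classifies the possible $\h$-module structures on $\h$ itself via highest-weight theory, writes down the candidate representations $\nabla_{e_i}$ explicitly as in equations \eqref{repre1} and \eqref{repre2}, and then shows that no associated $1$-cocycle $\mathfrak{q}$ can be bijective (every such $\mathfrak{q}$ has rank $4$), which by Medina's correspondence rules out any flat torsion-free connection.

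Thus, assuming the Lagrangian ideal exists forces $\h$ to be solvable, hence $\s = 0$, hence $\G$ is solvable, contradicting our hypothesis. Therefore, no $10$-dimensional non-solvable symplectic Lie algebra can admit a Lagrangian ideal. Since this is a direct consequence of Lemma \ref{10is sol}, the theorem serves mostly as a structural statement drawing attention to a dimensional obstruction to the Lagrangian extension machinery developed earlier in the paper.
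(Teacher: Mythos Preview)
Your proposal is correct and follows exactly the paper's approach: the theorem is stated immediately after Lemma \ref{10is sol} with the single line ``Together with Lemma \ref{10is sol}, we therefore have,'' so it is indeed just the contrapositive. Your additional justification that the Lagrangian ideal $\mathfrak{j}$ lies in the radical (because a Lagrangian ideal is abelian, hence solvable) makes explicit a step the paper's proof of Lemma \ref{10is sol} leaves implicit when asserting that the quotient $\h$ must contain a semisimple part.
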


The following immediate consequence appears from \cite{AM}. Any eight-dimensional exact symplectic non-solvable Lie algebra $\G$ has a unique Lagrangian ideal $\mathfrak{j}$. Observe that every quotient $\h$ of an exact symplectic Lie algebra $\G$ must have a right-identity element as well.  
The uniqueness of the Lagrangian ideal in  any eight-dimensional exact symplectic non-solvable Lie algebra $\G$ has the following remarkable consequence:
\begin{pr}\label{Pr from eight}
The correspondence which associates to an eight-dimensional exact symplectic non-solvable Lie algebra the extension triple $(\h,\nabla, [\alpha]_{L,\rho})$, which is defined with respect to the Lagrangian ideal $\mathfrak{nil}(\G)$ of $(\G,\omega)$, induces a bijection between isomorphism classes of eight-dimensional exact symplectic non-solvable  Lie algebras and isomorphism classes of
 four dimensional flat reductive\footnote{A Lie algebra $\h$ is called reductive if the following equivalence conditions hold$:$
\begin{enumerate}
\item it is the direct sum $\h\simeq\mathfrak{s}\oplus\mathfrak{a}$ of a semisimple Lie algebra $\mathfrak{s}$ and an abelian Lie algebra $\mathfrak{a}$;
\item its adjoint representation is completely reducible: every invariant subspace has an invariant complement. 
\end{enumerate}} Lie algebras with symplectic extension cohomology class.
\end{pr}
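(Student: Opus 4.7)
The strategy is to reduce this statement to the general one-to-one correspondence of Proposition \ref{1-1 corress} by showing that, for an eight-dimensional exact symplectic non-solvable Lie algebra $(\G,\omega)$, the distinguished Lagrangian ideal $\mathfrak{nil}(\G)$ from \cite{AM} is forced upon us, and that the resulting quotient is precisely a four-dimensional flat reductive Lie algebra with the right-identity property. The proof therefore splits naturally into the three tasks: (a) well-definedness of the map $(\G,\omega)\mapsto (\h,\nabla,[\alpha]_{L,\rho})$; (b) identification of $\h$ as four-dimensional flat reductive; (c) surjectivity onto triples, i.e. every such reductive triple is realized by an eight-dimensional exact non-solvable $(\G,\omega)$.

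For (a), I start with the uniqueness statement cited from \cite{AM}: any eight-dimensional exact symplectic non-solvable Lie algebra $\G$ admits $\mathfrak{nil}(\G)$ as its unique Lagrangian ideal. Applying Proposition \ref{Pr1isomo} to $\mathfrak{j}=\mathfrak{nil}(\G)$ produces the quotient flat Lie algebra $(\h,\nabla)$ with $\h=\G/\mathfrak{nil}(\G)$, and Proposition \ref{1-1 corress} attaches to it the extension class $[\alpha]_{L,\rho}\in H^2_{L,\rho}(\h,\h^\ast)$. Uniqueness of $\mathfrak{nil}(\G)$ is what makes this triple canonically associated to $(\G,\omega)$.

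For (b), observe that $\dim\G=8$ and $\mathfrak{nil}(\G)$ is Lagrangian, so $\dim\h=4$. The crucial point is reductivity. Write a Levi--Malcev decomposition $\G=\mathfrak{s}\ltimes\mathfrak{r}$ with $\mathfrak{s}\neq 0$; since $\mathfrak{nil}(\G)$ is an ideal of $\G$ that contains $[\G,\mathfrak{r}]$ and $\mathfrak{n}(\mathfrak{r})$, the image of $\mathfrak{r}$ in $\h$ is a central abelian ideal and $\mathfrak{s}$ embeds as a semisimple complement, so $\h\simeq \mathfrak{s}\oplus\mathfrak{a}$ with $\mathfrak{a}$ abelian. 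This is the defining form of a reductive Lie algebra. Moreover, because $\omega$ is exact, Proposition \ref{exactLagrang} forces $(\h,\nabla)$ to carry a right-identity element and the cocycle $\alpha$ to satisfy the exactness condition listed there; this is exactly the extra datum carried by the class $[\alpha]_{L,\rho}$ in the exact setting.

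For (c), given any triple $(\h,\nabla,[\alpha]_{L,\rho})$ with $\h$ four-dimensional flat reductive and satisfying the right-identity/exactness conditions of Proposition \ref{exactLagrang}, form the Lagrangian extension $(\G_{\nabla,\alpha},\omega_0)$ via equations (\ref{Liebracket1})--(\ref{formlagra}); it is an eight-dimensional symplectic Lie algebra, its form is exact by Proposition \ref{exactLagrang}, and it is non-solvable because $\h\simeq \mathfrak{s}\oplus\mathfrak{a}$ projects onto a semisimple $\mathfrak{s}$. Finally, bijectivity follows by combining the unique-Lagrangian-ideal statement of \cite{AM} with the general correspondence of Proposition \ref{1-1 corress}: the latter ensures that isomorphic extension triples come from isomorphic symplectic Lie algebras with Lagrangian ideal, and the former ensures that the chosen ideal on the symplectic side is forced to be $\mathfrak{nil}(\G)$, so no ambiguity in the choice of Lagrangian ideal can spoil the bijection.

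The main obstacle I expect is verifying reductivity of $\h$ cleanly, i.e. proving that $\mathfrak{nil}(\G)$ swallows enough of the radical to make $\h/\mathfrak{s}$ abelian in the right way; this is where the specific eight-dimensional and Lagrangian hypotheses are essential, and it is the point where one must genuinely use dimension counting against the possibilities provided by Lemma \ref{dimh=5} (and its four-dimensional analogue invoked in \cite{AM}) to rule out non-reductive quotients.
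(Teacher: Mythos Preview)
Your proposal is correct and follows the same route the paper sketches: the paper gives no formal proof but presents the proposition as an immediate consequence of the uniqueness of the Lagrangian ideal $\mathfrak{nil}(\G)$ from \cite{AM}, the right-identity property of the quotient (Proposition~\ref{exactLagrang}), and the general correspondence of Proposition~\ref{1-1 corress}. Your argument in (b) for reductivity via $[\G,\mathfrak{r}]\subset\mathfrak{nil}(\G)$ is the right one and makes the ``obstacle'' you flag at the end unnecessary --- no case analysis against Lemma~\ref{dimh=5} is needed, since centrality of the abelian part in $\h$ already follows from that inclusion.
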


\section{Three-dimensional flat Lie algebras}\label{sec3}

It is well known that there is no left-symmetric algebra structure on a semisimple Lie algebra. Therefore, over the real
field $\R$, besides $3$-dimensional simple Lie algebras $\mathfrak{sl}_2(\R)$ and $\mathfrak{so}(3)$, up to isomorphisms, there
are the following (non-isomorphic) Lie algebras (Mubarakzyanov's classification, we only give the nonzero products):
\begin{enumerate}
\item[] $3\G_1$, abelian.
\item[] $3\G_{2,1}\oplus\G_1=\langle e_1,e_2,e_3~|~[e_3,e_2]=e_2\rangle$, decomposable solvable.
\item[] $\h_3=\langle e_1,e_2,e_3~|~[e_1,e_2]=e_3\rangle$, Heisenberg-Weyl algebra, nilpotent.
\item[] $\G_{3,2}=\langle e_1,e_2,e_3~|~[e_1,e_3]=e_1,~[e_2,e_3]=e_1+e_2\rangle$, solvable.
\item[] $\G_{3,3}=\langle e_1,e_2,e_3~|~[e_1,e_3]=e_1,~[e_2,e_3]=e_2\rangle$, solvable.
\item[] $\G_{3,4}=\langle e_1,e_2,e_3~|~[e_1,e_3]=e_1,~[e_2,e_3]=\alpha e_2, -1\leq\alpha<1,\alpha\neq 0\rangle$, solvable, Poincar\'e algebra $\mathfrak{p}(1,1)$ when $\alpha=-1$.
\item[] $\G_{3,5}=\langle e_1,e_2,e_3~|~[e_1,e_3]=\beta e_1-e_2,~[e_2,e_3]=e_1+\beta e_2, \beta\geq 0\rangle$, solvable.
\end{enumerate}
 
\begin{Le}\label{righ-identity} Let $(\h,\nabla)$ be a flat Lie algebra on three-dimensional Lie algebra $\G_{3,j}$. If $(\h,\nabla)$ has a  right-identity element, we may assume that the right-identity 
\begin{enumerate}
\item For Lie algebra $3\G_1$ is, $e=e_3$.
\item For Lie algebra $3\G_{2,1}\oplus\G_1$ is, $e=e_1,~e=e_2,~e=e_1+e_2$ or $e=\lambda e_3 $, with $\lambda\in\R^\ast$.
\item For Lie algebra $\h_3$ is, $e=e_2$ or $e=e_3$.
\item For Lie algebra $\G_{3,2}$ is, $e=e_1$, $e=e_2$ or $e=\mu e_3$, with $\mu\in\R^\ast$.
\item For Lie algebra $\G_{3,3}$ is, $e=e_1$ or $e=\eta e_3$, with $\eta\in\R^\ast$.
\item For Lie algebra $\G_{3,4}$ is, $e=e_1$, $e=e_2$, $e=e_1+e_2$ or $e=\gamma e_3$, with $\gamma\in\R^\ast$ and $0<|\alpha|<1$.\\\phantom{xxxxxxxxxxxxxxxxxxx~~~}$e=e_1$, $e=e_1+e_2$ or $e=\nu e_3$, with $\nu>0$,  when, $\alpha=-1$.
\item For Lie algebra $\G_{3,5}$ is, $e=e_1$ or $e=\delta_1 e_3$, with $\delta_1\in\R^\ast$ and $\beta>0$.\\
\phantom{xxxxxxxxxxxxxxxxxxx~~~}$e=e_1$ or $e=\delta_2e_3$, with $\delta_2>0$, when $\beta=0$.

\end{enumerate}

\end{Le}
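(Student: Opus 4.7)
The proof proceeds by the systematic use of Lie algebra automorphisms: if $\phi\in\mathrm{Aut}(\h)$, then pushing the flat torsion-free connection $\nabla$ forward to $\phi_\ast\nabla$ yields an isomorphic flat Lie algebra whose right-identity is $\phi(e)$. Consequently, the set of possible right-identities is a union of $\mathrm{Aut}(\h)$-orbits, and the statement reduces to exhibiting a representative from each such orbit. The elementary consequences of the defining relation $\nabla_xe=x$ are worth noting at the outset: torsion-freeness gives $\nabla_ex=x+\ad_e(x)$, and the right-multiplication operator $R_e^\nabla$ is the identity, so $e$ is in particular not annihilated by any $\nabla_x$ with $x\neq 0$.

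For each of the seven three-dimensional real Lie algebras I would compute $\mathrm{Aut}(\h)$ explicitly and decompose $\h\setminus\{0\}$ into orbits. For $3\G_1$, $\mathrm{Aut}(\h)=GL(3,\R)$ acts transitively, so $e=e_3$. For $3\G_{2,1}\oplus\G_1$, automorphisms preserve the center $\R e_1$ and the derived subalgebra $\R e_2$, so they have the form $e_1\mapsto ae_1$, $e_2\mapsto be_2$, $e_3\mapsto\alpha e_1+\beta e_2+e_3$ with $a,b\in\R^\ast$; a direct orbit computation gives the four representatives $e_1$, $e_2$, $e_1+e_2$, and $\lambda e_3$. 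Analogous computations handle the Heisenberg algebra $\h_3$ and the solvable algebras $\G_{3,2}$, $\G_{3,3}$, $\G_{3,4}$, $\G_{3,5}$: in each case $\mathrm{Aut}(\h)$ is generated by basis rescalings compatible with the structure together with ``inner shifts'' along the derived subalgebra, and the orbits on $\h$ are then read off by standard linear algebra. The parameters $\mu,\eta,\gamma,\lambda,\delta_1$ appearing in the lemma are precisely the genuine moduli left over after applying all available automorphisms.

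The third step is to verify for each listed representative $e$ that a flat torsion-free connection with $e$ as right-identity actually exists. This amounts to solving the linear system obtained from $\nabla_xe=x$ together with the torsion equation $\nabla_xy-\nabla_yx=[x,y]$ and the flatness condition $[\nabla_x,\nabla_y]=\nabla_{[x,y]}$, with the structure constants of $\nabla$ as unknowns; one checks that in each of the listed cases the system is consistent. This step also accounts for the positivity constraints appearing in parts (6) and (7): for $\G_{3,4}$ with $\alpha=-1$, the extra involutive automorphism $e_1\leftrightarrow e_2$, $e_3\mapsto -e_3$ identifies the orbit of $e_2$ with that of $e_1$ and the orbits of $\pm\nu e_3$ with each other, forcing $\nu>0$; similarly for $\G_{3,5}$ with $\beta=0$, the rotational symmetry in the $(e_1,e_2)$-plane together with the sign-flip $e_3\mapsto -e_3$ collapses the generic two-parameter family $ae_1+be_2$ to the single representative $e_1$ and forces $\delta_2>0$.

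The main obstacle is the careful book-keeping in these two special parameter values, where $\mathrm{Aut}(\h)$ is strictly larger than in the generic case and one must match the collapsed orbits against the generic orbit list. Everywhere else the computation is routine linear algebra within a fixed basis, and no subtlety arises beyond keeping track of which scalar factors are and are not removable by an automorphism.
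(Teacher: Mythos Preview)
Your approach is essentially the same as the paper's: reduce the right-identity to a normal form by acting with $\mathrm{Aut}(\h)$, compute the automorphism group explicitly in each case, and read off orbit representatives (the paper only writes out the orbit analysis for $3\G_{2,1}\oplus\G_1$ and says the remaining cases are analogous). One minor point: your ``third step'' of checking that a flat torsion-free connection actually exists for each listed $e$ is not part of this lemma---the statement is conditional (``if $(\h,\nabla)$ has a right-identity element, we may assume\ldots''), and the existence/nonexistence analysis is deferred to the subsequent proposition, where indeed some of the listed representatives (e.g.\ $e=e_2$ for $3\G_{2,1}\oplus\G_1$) are eliminated.
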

\begin{proof}
Two flat Lie algebras $(\h, \nabla)$ and $(\h, \tilde{\nabla})$ are isomorphic if and
only if there is a $\psi \in\mathrm{Aut}(\h)$ such that $\tilde{\nabla}_x= \psi\circ  \nabla_{\psi^{-1}(x)} \circ \psi^{-1}$. If $e$ is a right-identity element of $(\h,\nabla)$, then $\varrho(e)=\mathrm{I}_{\h}$, implies $\tilde{\varrho}(\psi(e)) =\psi\circ\varrho(e)\circ\psi^{-1}=\mathrm{I}_{\h}$, i.e., the flat Lie algebra $(\h,\tilde{\nabla})$ has right-identity $\psi(e)$. Lie algebra $3\G_{2,1}\oplus\G_1$ is the only one shown in the proof, but the same procedure will be applied to all remaining algebras. The automorphism group of $3\G_{2,1}\oplus\G_1$ is
\begin{equation}\label{groupauto}
\begin{small} \mathrm{Aut}(3\G_{2,1}\oplus\G_1)=\left\{\psi=\left( \begin {array}{ccc} x_{{11}}&0&x_{{13}}\\ \noalign{\medskip}0
&x_{{22}}&x_{{23}}\\ \noalign{\medskip}0&0&1\end {array} \right)~|~x_{11}x_{22}\neq0
\right\}.
\end{small}
\end{equation}
Let $e=\lambda_1e_1+\lambda_2 e_2+\lambda_3 e_3:=(\lambda_1,\lambda_2,\lambda_3)$, and note that $e\neq0$. If $\lambda_3\neq 0$, then let $x_{23}=-\frac{x_{22}\lambda_2}{\lambda_3}$, $x_{13}=-\frac{x_{11}\lambda_1}{\lambda_3}$ and we have, $\psi(e)=(0,0,\lambda_3)$ with $\lambda_3\neq0$. 
If $\lambda_3=0$.\\
$\ast$ \textbf{Case} 1. If $\lambda_2=0$ implies $\lambda_1\neq0$ and $\psi(e)=(1,0,0)$ with $x_{11}=\frac{1}{\lambda_1}$.\\ $\ast$ \textbf{Case} 2, $\lambda_2\neq0$ and $\lambda_1=0$, then $\psi(e)=(0,1,0)$ with $x_{22}=\frac{1}{\lambda_2}$.\\
$\ast$ \textbf{Case} 3. If $\lambda_1\lambda_2\neq0$, then $\psi(e)=(1,1,0)$ with $x_{11}=\frac{1}{\lambda_1}$ and $x_{22}=\frac{1}{\lambda_2}$.

 As a result, $e=e_1,~e=e_2,~e=\lambda e_3$ or $e=e_1+e_2$ with $\lambda\in\R^\ast$. It is easy to verify that, for all $\psi\in\mathrm{Aut}(\mathcal{A}_2)$, $\psi(e_1)\neq e_2,\lambda e_3, e_1+e_2$, each and every one of them in the same way.
\end{proof}
Based on explicit calculations, we are now able to classify the flat Lie algebras (left-invariant affine structures) on each Lie algebra given above.
\begin{pr}\label{Pr 3flat}
Let $(\h,\nabla)$ be a three-dimensional flat Lie algebra which admits a right-identity elements. Then $\h$  is isomorphic to one of the following algebras:
\end{pr}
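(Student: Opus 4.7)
The plan is to carry out a case-by-case analysis indexed by the seven Lie algebras listed before the statement, using the reduction provided by Lemma \ref{righ-identity} to fix a normal form for the right-identity element $e$ in each case.

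First I would fix a Lie algebra $\h = \G_{3,j}$ from the list and a representative $e\in\h$ of the $\mathrm{Aut}(\h)$-orbit of right-identities provided by Lemma \ref{righ-identity}. A flat torsion-free connection on $\h$ is determined by the three left-multiplication operators $L_i=\nabla_{e_i}\in\mathrm{End}(\h)$, which must satisfy
\begin{equation*}
L_x L_y - L_y L_x = L_{[x,y]},\qquad L_x y - L_y x = [x,y],\qquad \varrho(e)=\mathrm{I}_\h.
\end{equation*}
The right-identity condition $\varrho(e)=L_e-\mathrm{ad}_e=\mathrm{I}_\h$ immediately eliminates several unknowns (in many of the cases it determines one full column of each $L_i$), and the torsion-free condition is a linear system in the remaining entries. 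What is left is then the quadratic flatness system, which for each algebra reduces to a small polynomial system in at most a handful of parameters.

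Next, for each resulting parametric family of solutions $\{\nabla^{(t)}\}$, I would use the isomorphism criterion recalled in the proof of Lemma \ref{righ-identity} — namely, $(\h,\nabla)\cong(\h,\tilde\nabla)$ iff there exists $\psi\in\mathrm{Aut}(\h)$ with $\tilde L_x=\psi\circ L_{\psi^{-1}(x)}\circ\psi^{-1}$ — restricted to the stabilizer $\mathrm{Aut}(\h)_e$ of the chosen $e$, since the orbit normalization has already been used to fix $e$. Running this residual action on parameter space (typically diagonal or triangular subgroups of the automorphism groups written out, as in \eqref{groupauto}) produces the normal forms of the connections. In the abelian case $3\G_1$ the classification collapses to that of commutative associative algebras with unit on $\R^3$; in $\h_3$, $\G_{3,2}$, $\G_{3,3}$, $\G_{3,4}$, $\G_{3,5}$ the computation of $L_3$ is essentially forced by the eigen-structure of $\mathrm{ad}_{e_3}$, which considerably trims the parameter space.

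The main obstacle will be bookkeeping rather than a single hard step: each sub-case of Lemma \ref{righ-identity} (for instance the four choices $e=e_1,e_2,e_1+e_2,\lambda e_3$ in $3\G_{2,1}\oplus\G_1$, or the split into $0<|\alpha|<1$ versus $\alpha=-1$ for $\G_{3,4}$, and $\beta>0$ versus $\beta=0$ for $\G_{3,5}$) must be treated separately, and within each one the residual $\mathrm{Aut}(\h)_e$-action on parameters must be followed carefully to avoid either redundant families or missed ones. A secondary subtlety is that distinct choices of $e$ in the same $\h$ may produce the same flat Lie algebra up to automorphism of $\h$ moving $e$ to a different representative (this is already the reason Lemma \ref{righ-identity} gives several representatives rather than one): I would merge these at the end by comparing algebraic invariants of $\nabla$ (commutativity/associativity of $\ast$, spectrum of right-multiplications $\varrho(e_i)$, image of $\nabla$) to ensure the final list is irredundant and exhausts every right-identity flat structure on each $\G_{3,j}$.
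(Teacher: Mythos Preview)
Your proposal is correct and follows essentially the same approach as the paper: fix $e$ via Lemma~\ref{righ-identity}, impose torsion-free plus $\varrho(e)=\mathrm{I}_\h$ to linearly eliminate most structure constants, solve the residual quadratic flatness equations, and normalize by the stabilizer $\mathrm{Aut}(\h)_e$. The paper carries this out explicitly for $3\G_{2,1}\oplus\G_1$ (including the observation that one choice of $e$ leads to a contradiction) and states that the remaining algebras are handled identically.
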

{\renewcommand*{\arraystretch}{1.3}
\captionof{table}{Three-dimensional flat Lie algebras with right-identity element.}\label{tableleftt}
\small\begin{longtable}{clc}
			\hline
		Algebra& flat torsion-free connection&Lie algebra\\
			\hline
		$\h_1$& $\nabla_{e_3}e_j=e_j,~j=1,2,3$, $\nabla_{e_1}e_3=e_1$, $\nabla_{e_2}e_3=e_2$, $e=e_3$. &\multirow{1}{*}{$3\G_1$}	
		\\
	$\h_{2}^{b}$&$\nabla_{e_3}e_j=e_j,~j=1,2,3$, $\nabla_{e_1}e_1=e_1+e_2$, $\nabla_{e_1}e_2=be_3$, $\nabla_{e_1}e_3=e_1$, &\\
		&$\nabla_{e_2}e_1=be_3$, $\nabla_{e_2}e_2=be_1-be_3$, $\nabla_{e_2}e_3=e_2$, $b\in\R^\ast$, and $e=e_3$.&\\
$\h_{3}$&$\nabla_{e_3}e_j=e_j,~j=1,2,3$, $\nabla_{e_1}e_3=e_1$, $\nabla_{e_2}e_1=e_1$, $\nabla_{e_2}e_3=e_2$.&
		\\
		$\h_{4}^a$&$\nabla_{e_1}e_j=e_j, j=1,2,3,~\nabla_{e_2}e_1=e_2,~\nabla_{e_3}e_1=e_3,~\nabla_{e_3}e_2=e_2,$ &\multirow{1}{*}{$3\G_{2,1}\oplus\G_1$}	
		\\
		&$\nabla_{e_3}e_3= e_2+ae_3$, $a\in\R^\ast$ and $e=e_1$.&\\
	$\h_5$&$\nabla_{e_1}e_j=e_j, j=1,2,3,~\nabla_{e_2}e_1=e_2,~\nabla_{e_3}e_1=e_3,~\nabla_{e_3}e_2=e_2,$ &	\\
		&$\nabla_{e_3}e_3=e_3$, $e=e_1$.&\\
		$\h_6$&$\nabla_{e_1}e_j=e_j, j=1,2,3,~\nabla_{e_2}e_1=e_2,~\nabla_{e_3}e_1=e_3,~\nabla_{e_3}e_2=e_2,~e=e_1.$ 
		\\
		$\h_7$&$\nabla_{e_1}e_j=e_j, j=1,2,3,~\nabla_{e_2}e_1=e_2,~\nabla_{e_2}e_2= e_3,~\nabla_{e_3}e_1=e_3,$&
		\\
		&$\nabla_{e_3}e_2= e_2,~\nabla_{e_3}e_3= 2e_3$, $e=e_1$.&
		\\
		$\h_8$&$\nabla_{e_1}e_j=e_j, j=1,2,3,~\nabla_{e_2}e_1=e_2,~\nabla_{e_2}e_2=- e_3,~\nabla_{e_3}e_1=e_3,$&
		\\
		&$\nabla_{e_3}e_2= e_2,~\nabla_{e_3}e_3= 2e_3$, $e=e_1$.&
		\\
		
		$\h_9^{a}$&$\nabla_{e_1}e_1=e_1-e_2,~\nabla_{e_1}e_2=e_2,~\nabla_{e_1}e_3=-e_2+e_3,~\nabla_{e_2}e_1=e_2,$&
		\\
&$\nabla_{e_3}e_1=-e_2+e_3,~\nabla_{e_3}e_2=e_2,~\nabla_{e_3}e_3=ae_2+e_3,$&
		\\
		&$a\in\R$~~ and~~ $e=e_1+e_2$.&
		\\
		$\h_{10}^{\lambda,a,b}$&$\nabla_{e_1}e_1=\frac{a}{b} e_1-\frac{\lambda (a-b)}{b} e_3,~\nabla_{e_1}e_2=e_2,~\nabla_{e_1}e_3=\frac{1}{\lambda} e_1,~\nabla_{e_2}e_1=e_2$,&\\
&$\nabla_{e_2}e_3=\frac{1}{\lambda} e_2,~\nabla_{e_3}e_1=\frac{1}{\lambda} e_1,~\nabla_{e_3}e_2=\frac{1+\lambda}{\lambda} e_2,~\nabla_{e_3}e_3=\frac{1}{\lambda}e_3,$\\
&  $\lambda,a,b\in\R^\ast,~b\neq a,$ ~~and~~$e=\lambda e_3$.&
		\\
		$\h_{11}^{\lambda}$&$\nabla_{e_1}e_1= e_1,~\nabla_{e_1}e_2=e_2,~\nabla_{e_1}e_3=\frac{1}{\lambda} e_1,~\nabla_{e_2}e_1=e_2$,&\\
&$\nabla_{e_2}e_3=\frac{1}{\lambda} e_2,~\nabla_{e_3}e_1=\frac{1}{\lambda} e_1,~\nabla_{e_3}e_2=\frac{1+\lambda}{\lambda} e_2,~\nabla_{e_3}e_3=\frac{1}{\lambda}e_3,$\\
&  $\lambda\in\R^\ast$~~and~~$e=\lambda e_3$.&
		\\
		$\h_{12}^{\lambda}$&$\nabla_{e_1}e_1= \lambda e_3,~\nabla_{e_1}e_2=e_2,~\nabla_{e_1}e_3=\frac{1}{\lambda} e_1,~\nabla_{e_2}e_1=e_2$,&\\
&$\nabla_{e_2}e_3=\frac{1}{\lambda} e_2,~\nabla_{e_3}e_1=\frac{1}{\lambda} e_1,~\nabla_{e_3}e_2=\frac{1+\lambda}{\lambda} e_2,~\nabla_{e_3}e_3=\frac{1}{\lambda}e_3,$\\
&  $\lambda\in\R^\ast$~~and~~$e=\lambda e_3$.&
		\\
		$\h_{13}^{\lambda}$&$\nabla_{e_1}e_1= e_1,~\nabla_{e_1}e_3=\frac{1}{\lambda} e_1$,~$\nabla_{e_2}e_3=\frac{1}{\lambda} e_2$,~$\nabla_{e_3}e_1=\frac{1}{\lambda} e_1$,~$\nabla_{e_3}e_2=\frac{1+\lambda}{\lambda} e_2$&\\
&$\nabla_{e_3}e_3=\frac{1}{\lambda}e_3,$  $\lambda\in\R^\ast$~~and~~$e=\lambda e_3$.&
		\\
		$\h_{14}^{\lambda}$&$\nabla_{e_1}e_3=\frac{1}{\lambda} e_1$,~$\nabla_{e_2}e_3=\frac{1}{\lambda} e_2$,~$\nabla_{e_3}e_1=\frac{1}{\lambda} e_1$,~$\nabla_{e_3}e_2=\frac{1+\lambda}{\lambda} e_2$,~$\nabla_{e_3}e_3=\frac{1}{\lambda}e_3,$&\\
&  $\lambda\in\R^\ast$~~and~~$e=\lambda e_3$.&
		\\
			
		\multirow{1}{*}{$\h_{15}$}&$\nabla_{e_1}e_1=e_3$,~$\nabla_{e_1}e_2= e_1$,~$\nabla_{e_2}e_1=e_1-e_3$,~$\nabla_{e_2}e_2= e_2$,~$\nabla_{e_2}e_3= e_3$,&\multirow{1}{*}{$\h_3$}		\\
		&$\nabla_{e_3}e_2= e_3$~~and~~$e=e_2$.&\\
		\multirow{1}{*}{$\h_{16} $}&$\nabla_{e_1}e_2= e_1$,~$\nabla_{e_2}e_1=e_1-e_3$,~$\nabla_{e_2}e_2= e_2$,~$\nabla_{e_2}e_3= e_3$, $\nabla_{e_3}e_2= e_3$,&\\
		&$e=e_2$.&\\
		\multirow{1}{*}{$\h_{17}^{\mu}$}&$\nabla_{e_1}e_3=\frac{1}{\mu}e_1$, $\nabla_{e_2}e_3=\frac{1}{\mu}e_2$, $\nabla_{e_3}e_1=\frac{1-\mu}{\mu}e_1$, $\nabla_{e_3}e_2=-e_1+\frac{1-\mu}{\mu}e_2$,&\multirow{1}{*}{$\G_{3,2}$}		\\
	&$\nabla_{e_3}e_3=\frac{1}{\mu}e_3$, $\mu\in\R^\ast$~~and~~$e=\mu e_3$.&	\\
		\multirow{1}{*}{$\h_{18}^{\eta}$}&$\nabla_{e_1}e_3=\frac{1}{\eta}e_1$, $\nabla_{e_2}e_3=\frac{1}{\eta}e_2$, $\nabla_{e_3}e_1=\frac{1-\eta}{\eta}e_1$, $\nabla_{e_3}e_2=\frac{1-\eta}{\eta}e_2$, &\multirow{1}{*}{$\G_{3,3}$}		\\
		&$\nabla_{e_3}e_3=\frac{1}{\eta}e_3$, $\eta\in\R^\ast$~~and~~$e=\eta e_3$.&
		\\
			
			\multirow{1}{*}{$\h_{19}^{\alpha,\gamma}$}&$\nabla_{e_1}e_3=\frac{1}{\gamma}e_1$, $\nabla_{e_2}e_3=\frac{1}{\gamma}e_2$, $\nabla_{e_3}e_1=(-1+\frac{1}{\gamma})e_1$, $\nabla_{e_3}e_2=(-\alpha+\frac{1}{\gamma})e_2$&	\multirow{1}{*}{$\G_{3,4}^{0<|\alpha|<1}$}\\
	& $\nabla_{e_3}e_3=\frac{1}{\gamma}e_3$, $\gamma\in\R^\ast$~~and~~$e=\gamma e_3$.&	
\\
		\multirow{1}{*}{$\h_{20}^{\gamma}$}&$\nabla_{e_1}e_3=\frac{1}{\gamma}e_1$, $\nabla_{e_2}e_2=e_1$, $\nabla_{e_2}e_3=\frac{1}{\gamma}e_2$, $\nabla_{e_3}e_1=(-1+\frac{1}{\gamma})e_1$,&	\\
	&$\nabla_{e_3}e_2=(-\frac{1}{2}+\frac{1}{\gamma})e_2$, $\nabla_{e_3}e_3=\frac{1}{\gamma}e_3$, $\gamma\in\R^\ast$~~and~~$e=\gamma e_3$.&			
		\\
		\multirow{1}{*}{$\h_{21}^{\nu}$}&$\nabla_{e_1}e_3=\frac{1}{\nu}e_1$, $\nabla_{e_2}e_3=\frac{1}{\nu}e_2$, $\nabla_{e_3}e_1=\frac{1-\nu}{\nu} e_1$, $\nabla_{e_3}e_2=\frac{1+\nu}{\nu} e_2$,&\multirow{1}{*}{$\G_{3,4}^{\alpha=-1}$}\\
		&$\nabla_{e_3}e_3=\frac{1}{\nu} e_3$, $\nu\in\R^{\ast +}$~~and~~$e=\nu e_3$.&
		\\
		\multirow{1}{*}{$\h_{22}^{\beta,\delta_1}$}&$\nabla_{e_1}e_3=\frac{1}{\delta_1}e_1$, $\nabla_{e_2}e_3=\frac{1}{\delta_1}e_2$, $\nabla_{e_3}e_1=\frac{-\beta\delta_1+1}{\delta_1} e_1+e_2$, $\nabla_{e_3}e_2=-e_1+\frac{-\beta\delta_1+1}{\delta_1} e_2$,&\multirow{1}{*}{$\G_{3,5}^{\beta>0}$}	\\
		&$\nabla_{e_3}e_3=\frac{1}{\delta_1}e_3$, $\beta,\in\R^{\ast +}$, $\delta_1\in\R^\ast$~~and~~$e=\delta_1 e_3$.&
		\\
			
		\multirow{1}{*}{$\h_{23}^{\delta_2}$}&$\nabla_{e_1}e_3=\frac{1}{\delta_2}e_1$, $\nabla_{e_2}e_3=\frac{1}{\delta_2}e_2$, $\nabla_{e_3}e_1=\frac{1}{\delta_2} e_1+e_2$, $\nabla_{e_3}e_2=-e_1+\frac{1}{\delta_2}e_2$&\multirow{1}{*}{$\G_{3,5}^{\beta=0}$}	\\	
		
		&$\nabla_{e_3}e_3=\frac{1}{\delta_2}e_3$, $\delta_2\in\R^{\ast +}$~~and~~$e=\delta_2 e_3$.&
		\\\hline
			\end{longtable}}

\begin{proof}
 The first step in the procedure is to find all the possible expressions of the right-identity element $($Lemma $\ref{righ-identity}$ illustrates how this can be applied$)$. The \textit{LSA}-structure is given by $27$ structure constants via $\nabla_{e_1}, \nabla_{e_2}, \nabla_{e_3}$. The
condition $[x, y] = x \cdot y - y \cdot x$ determines 9 structure constants by linear equations. The \textit{LSA}-structure equations
\begin{equation}\label{LSAproperty}
\nabla_x\nabla_y-\nabla_y\nabla_x-\nabla_{[x,y]}=0
\end{equation}
 is equivalent to quadratic equations in the structure constants. In general, they are quite difficult to solve. It will be easier to calculate if there is a right-identity element. Now we will present a complete proof of Lie algebra $3\G_{2,1}\oplus\G_1$. Similarly, we will prove the other cases  using the same procedure. Let $\varrho(x)$  the right-multiplication by
$x$ in the flat Lie algebra $(\h,\nabla)$. We have
 \begin{equation}\label{rightsimplify}
 \varrho(e)=\mathrm{I}_\h~~~~\text{and}~~~~\mathrm{Tr}(\varrho(x))=3.
 \end{equation}
This determines another $9$ structure constants by linear equations. Let $\nabla_{e_1} = (a_{ij})$, $\nabla_{e_2} = (b_{ij})$, $\nabla_{e_3} = (c_{ij})$ with $i, j = 1,\ldots,4$ . Using $\nabla_x-\varrho(x)=\mathrm{ad}_x$ we obtain

\begin{eqnarray}
&&\nabla_{e_1}e_1=a^j_1e_j,~\nabla_{e_1}e_2=a^j_2e_j,~\nabla_{e_1}e_3=a^j_3e_j,\nonumber\\
&&\nabla_{e_2}e_1=a^j_2e_j,~\nabla_{e_2}e_2=b^j_2e_j,~\nabla_{e_2}e_3=b^j_3e_j,~\\&&\nabla_{e_3}e_1=a^j_3e_j,~\nabla_{e_3}e_2=e_2+b^j_3e_j,~\nabla_{e_3}e_3=c^j_3e_j,\nonumber
\end{eqnarray}
for $j=1,2,3$, where $d^j_ke_j=\sum_{j=1}^3d_{jk}e_j$ and $a^j_k, b^j_k, c^j_k\in\R$.

$\ast$ \textbf{Case} 1: Flat Lie algebra with  central right-identity $e=e_1$. Using $(\ref{rightsimplify})$ we have $\varrho(e_1)=\mathrm{I}_{\h}$. This determines another $9$ structure constants by linear equations. The
remaining \textit{LSA}-structure equations $(\ref{LSAproperty})$ then are almost trivial. It is easy to see that they have two solutions
\begin{flalign}\label{solution1}
\nabla_{e_2}e_2&=(-{\tfrac {ac}{2}}+{\tfrac {{b}^{2}}{2}}+c)e_1-be_2+ce_3,~\nabla_{e_2}e_3=-{\tfrac { \left(  \left( a-2 \right) c-{b}^{2} \right) b}{2\,c}}e_1+{\tfrac { \left( a-2 \right) c-3\,{b}^{2}}{2
\,c}}e_2+be_3,&\nonumber \\
\nabla_{e_3}e_2&=-{\tfrac { \left(  \left( a-2 \right) c-{
b}^{2} \right) b}{2\,c}}e_1+
{\tfrac {ac-3\,{b}^{2}}{2\,c}}e_2+be_3,~\nabla_{e_3}e_3={\tfrac { \left( -{a}^{2}+4 \right) {c}^{2}+4
\,c{b}^{2}+{b}^{4}}{4\,{c}^{2}}}e_1-{\tfrac {b \left( {b}^{2}+c \right) }{{c}^{2}}}e_2+ae_3,&\\
\nabla_{e_1}e_j&=\nabla_{e_j}e_1=e_j,~j=1,2,3.&\nonumber
\end{flalign}
\begin{flalign}\label{solution2}
\nabla_{e_2}e_3&=a^\prime e_2,~\nabla_{e_3}e_2=(a^\prime +1)e_2,~\nabla_{e_3}e_3=a^\prime(a^\prime-b^\prime)e_1+c^\prime e_2+b^\prime e_3,&
\end{flalign}
with $~\nabla_{e_1}e_j=\nabla_{e_j}e_1=e_j,$ for $j=1,2,3$. Where, $a,a^\prime,b,b^\prime,c^\prime\in\R$, $c\in\R^\ast$.   The automorphism group of $3\G_{2,1}\oplus\G_1$ is
\begin{equation}
\begin{small}
\mathrm{Aut}(3\G_{2,1}\oplus\G_1)=\left\{\psi=\left( \begin {array}{ccc} x_{{11}}&0&x_{{13}}\\ \noalign{\medskip}
0&x_{{22}}&x_{{23}}\\ \noalign{\medskip}0&0&1\end {array} \right) 
~|~x_{1 1}x_{2 2} \neq0
\right\}.
\end{small}
\end{equation}
 To begin with, will simplify solution $(\ref{solution2})$, remember that two flat  algebras $(\h,\nabla)$ and $(\h,\tilde{\nabla})$ are isomorphic if and only if there is $\psi\in\mathrm{Aut}(3\G_{2,1}\oplus\G_1)$, such that $\tilde{\nabla}_x=\psi\circ\nabla_{\psi^{-1}(x)}\circ\psi^{-1}$. We have
 \begin{equation}\label{appisomo}
 \tilde{\nabla}_{e_2}=\psi\circ\nabla_{\psi^{-1}(e_2)}\circ\psi^{-1}~~\text{and}~~\tilde{\nabla}_{e_3}=\psi\circ\nabla_{\psi^{-1}(e_3)}\circ\psi^{-1},
\end{equation}
with $x_{11}=1$, $x_{23}=0$ and $x_{13}=a^\prime$, knowing the value of $c$ makes choosing $x_{22}$ preferable, this respects  $\varrho(e_1)=\mathrm{I}_\h$. Due to this, we have
\begin{equation}\label{struc1}
\tilde{\nabla}_{e_1}e_j=e_j, j=1,2,3,~\tilde{\nabla}_{e_2}e_1=e_2,~\tilde{\nabla}_{e_3}e_1=e_3,~\tilde{\nabla}_{e_3}e_2=e_2,~\tilde{\nabla}_{e_3}e_3=\epsilon e_2+(-2a+b)e_3,
\end{equation}
where, $a,b\in\R$, and $\epsilon=0,1$.
Using (\ref{appisomo}), we will extract all flat torsion-free connections that are not isomorphic, we obtain flat Lie algebras $\h_{4}^{a}, \h_{5}$ and $\h_{6}$ as given in Proposition \ref{Pr 3flat}.

 Using the first  solution $(\ref{solution1})$, by applying $(\ref{appisomo})$ we have
\begin{equation}\label{struc2}
\tilde{\nabla}_{e_1}e_j=e_j, j=1,2,3,~\tilde{\nabla}_{e_2}e_1=e_2,~\tilde{\nabla}_{e_2}e_2=\pm e_3,~\tilde{\nabla}_{e_3}e_1=e_3,~\tilde{\nabla}_{e_3}e_2= e_2,~\tilde{\nabla}_{e_3}e_3= 2e_3,
\end{equation}
with $x_{11}=1,~x_{22}=\frac{1}{\sqrt{|c|}}$, $x_{2 3} =\frac{bx_{2 2}}{c}$, $x_{1 3} =\frac{ac - b^2 - 2c}{2c}$. As a final note, in this case, we can easily show that the two flat torsion-free connections defined in $(\ref{struc1})$ and $(\ref{struc2})$ are not isomorphic no matter what $\psi\in\mathrm{Aut}(3\G_{2,1}\oplus\G_1)$.

$\ast$ \textbf{Case} 2: Flat Lie algebra with $e=e_1+e_2$.  Using $(\ref{rightsimplify})$ we have $\varrho(e_1)+\varrho(e_2)=\mathrm{I}_{\h}$. This determines another $9$ structure constants by linear equations. The
remaining \textit{LSA}-structure equations $(\ref{LSAproperty})$ are very simple. It is easy to see that they have
a unique solution, which is given by 
\begin{equation}\label{struc3}
\begin{split}
\nabla_{e_1}e_1=e_1-e_2,~\nabla_{e_1}e_2=e_2,~\nabla_{e_1}e_3=-(a+1)e_2+e_3,~\nabla_{e_2}e_1=e_2,~\nabla_{e_2}e_3=ae_2,\\
\nabla_{e_3}e_1=-(a+1)e_2+e_3,~\nabla_{e_3}e_2=(a+1)e_2,~\nabla_{e_3}e_3=-(a^2+a)e_1+be_2+(2a+1)e_3,
\end{split}
\end{equation}
where $a,b\in\R$. Furthermore, the flat torsion-free connection $(\ref{struc3})$ (with $x_{1 1} = 1, x_{2 2} = 1, x_{1 3} = a$ and this respects  $\varrho(e_1+e_2)=\mathrm{I}_\h$ ) can even be simplified by applying $(\ref{appisomo})$, resulting in 
\begin{equation}
\begin{split}
\tilde{\nabla}_{e_1}e_1=e_1-e_2,~\tilde{\nabla}_{e_1}e_2=e_2,~\tilde{\nabla}_{e_1}e_3=-e_2+e_3,~\tilde{\nabla}_{e_2}e_1=e_2,\\
\tilde{\nabla}_{e_3}e_1=-e_2+e_3,~\tilde{\nabla}_{e_3}e_2=e_2,~\tilde{\nabla}_{e_3}e_3=a^\prime e_2+e_3,
\end{split}
\end{equation}

$\ast$ \textbf{Case} 3: Flat Lie algebra with $e=\lambda e_3$. Assume first that $\lambda\neq0$. Then $\varrho(\lambda e_3)=\mathrm{I}_\h$ determines
$7$ structure constants. It is straightforward to solve the remaining \textit{LSA}-structure equations and to see that they have a unique solution, which is given by
\begin{eqnarray*}
&&\nabla_{e_1}e_1=ae_1-\lambda b(b-a) e_3,~\nabla_{e_1}e_2=be_2,~\nabla_{e_1}e_3=\tfrac{1}{\lambda} e_1,~\nabla_{e_2}e_1=be_2,\\
&&\nabla_{e_2}e_3=\tfrac{1}{\lambda} e_2,~\nabla_{e_3}e_1=\tfrac{1}{\lambda} e_1,~\nabla_{e_3}e_2=\tfrac{1+\lambda}{\lambda} e_2,~\nabla_{e_3}e_3=\tfrac{1}{\lambda}e_3,
\end{eqnarray*}
where $a,b\in\R$ and $\lambda\in\R^\ast$. As a matter of cohomological calculus (see, Table \ref{tableofcoho}), we will extract  the following flat Lie algebras, which are given by  $\h_{10}^{\lambda,a,b}$, $\h_{11}^{\lambda}$, $\h_{12}^{\lambda}$, $\h_{13}^{\lambda}$ and $\h_{14}^{\lambda}$  in Proposition \ref{Pr 3flat}.

$\ast$ \textbf{Case} 4: Flat Lie algebra with $e=e_2$. After a short calculation we obtain a contradiction. This contradiction arises from $\varrho(e_2)=\mathrm{I}_\h$.
\end{proof}

\section{Six-dimensional Frobeniusian Lie algebras}\label{sec4}

The following remarkable result summarizes our main observation: 
\begin{theo}\label{Principale}
Let $(\G,\mathrm{d}\mu)$ be a  Frobeniusian Lie algebra of dimension $n\leq6$. Assume that $\mathrm{d}\mu$ has a Lagrangian ideal. Then, for every choice of symplectic form $\omega$ on six-dimensional Frobeniusian Lie algebra $\G$, the symplectic Lie algebra $(\G,\omega)$ has  at least one Lagrangian ideal. 
\end{theo}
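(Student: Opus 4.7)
The plan is to reduce the statement to the structural classification of Section \ref{sec1} together with the list of $3$-dimensional flat Lie algebras with right-identity element supplied by Proposition \ref{Pr 3flat}. By Proposition \ref{exactLagrang}, the hypothesis that $(\G,\mathrm{d}\mu)$ is Frobeniusian with a Lagrangian ideal realizes $\G$ as a Lagrangian extension $\G=\G_{\nabla,\alpha}=\h\oplus\h^\ast$, where $(\h,\nabla)$ is a flat Lie algebra with right-identity element $e$ and $\alpha\in Z^2_{L,\nabla}(\h,\h^\ast)$ obeys $\alpha(x,y)(e)=\gamma([x,y])$ for some $\gamma\in\h^\ast$. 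For $n\leq 4$ the statement is verified by direct inspection; for $n=6$ the factor $\h$ is $3$-dimensional and lies in the explicit finite list of Proposition \ref{Pr 3flat}, and moreover $\h^\ast\subset\G$ is automatically a $3$-dimensional abelian ideal. The Frobeniusian extensions $\G_{\nabla,\alpha}$ obtained in this way are precisely those enumerated by Proposition \ref{Prclassifi}.

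A preliminary observation shapes the search. For any symplectic form $\omega$ on a $6$-dimensional Lie algebra, a Lagrangian ideal is forced to be $3$-dimensional and abelian: if $\mathfrak{a}$ is an isotropic ideal of dimension $3$ and $x,y\in\mathfrak{a}$, then the cocycle identity \eqref{cocy} applied to $(x,y,z)$ with arbitrary $z\in\G$ gives $\omega([x,y],z)=-\omega([y,z],x)-\omega([z,x],y)=0$, since $[y,z],[z,x]\in\mathfrak{a}$ by the ideal property and $\mathfrak{a}$ is isotropic; non-degeneracy of $\omega$ then forces $[x,y]=0$. Consequently, exhibiting a Lagrangian ideal for $(\G,\omega)$ is equivalent to exhibiting a $3$-dimensional abelian ideal $\mathfrak{a}\subset\G$ on which $\omega$ restricts to zero.

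Accordingly the argument I would carry out has three stages. First, run through the list of pairs $(\h,\nabla)$ of Proposition \ref{Pr 3flat} together with the admissible exact cocycles $\alpha$ of Proposition \ref{exactLagrang}, thereby presenting the underlying Frobeniusian Lie algebras $\G_{\nabla,\alpha}$ in closed form. Second, for each such $\G_{\nabla,\alpha}$ determine the full cocycle space $Z^2(\G,\R)$ using the brackets \eqref{Liebracket1}--\eqref{Liebracket2}, so that every symplectic form on $\G$ is parametrized as $\omega=\mathrm{d}\mu+\eta$ with $\eta$ a $2$-cocycle. Third, enumerate the $3$-dimensional abelian ideals of $\G$ (typically $\h^\ast$ itself, together with a small family of translates $\{(x,\sigma(x))+\h^\ast\cap\ker\sigma\}$ and a few alternative ideals supported in the nilradical) and verify that for every symplectic $\omega$ at least one candidate $\mathfrak{a}$ satisfies $\omega|_{\mathfrak{a}\times\mathfrak{a}}=0$.

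The main obstacle is not conceptual but combinatorial: the list of Proposition \ref{Pr 3flat} is long, several of its entries branch over continuous parameters, and the behaviour varies from branch to branch. In many branches $\h^\ast$ itself remains isotropic for every $\omega$ because the cocycle condition together with the restricted structure of $\G$ forces $\eta|_{\h^\ast\times\h^\ast}=0$; in the remaining branches one must exhibit by hand a replacement abelian ideal. Since the $3$-dimensional abelian ideals of each $\G_{\nabla,\alpha}$ form a finite collection constrained by $\dim\G=6$, the verification is mechanical though lengthy; I would defer the corresponding case analysis to Appendix \ref{AppenA} and reserve the main text for the reduction and for exhibiting the representative Lagrangian ideals.
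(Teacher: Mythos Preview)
Your proposal is correct and follows essentially the same route as the paper: both arguments reduce to the explicit list of Lagrangian extensions $\G_{\nabla,\alpha}$ over the flat algebras of Proposition~\ref{Pr 3flat}, compute the general closed $2$-form on each such $\G$, and then verify case by case that some $3$-dimensional ideal remains isotropic. The paper records the second step in Table~\ref{cocycleofall} and the third in the ideal lists of Proposition~\ref{Prclassifi}; your preliminary observation that a Lagrangian ideal in dimension $6$ is necessarily abelian is a useful organizing remark but does not alter the strategy.
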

\begin{proof}
We can conclude this statement by analyzing symplectic forms in general position for each Fobeniusian Lie algebra so that their exact form admits a Lagrangian ideal. Table $\ref{cocycleofall}$ shows symplectic forms in general position, and Proposition $\ref{Prclassifi}$ gives all possible Lagrangian ideals for each form.
\end{proof}

\begin{remark}\label{Remark1-1}
As a result of Theorem $\ref{Principale}$, when we consider the one-to-one correspondence between the isomorphism classes of Lagrangian extensions over a flat Lie algebra and the group $H^2_{L,\rho}(\h,\h^\ast)$, then the classification of all symplectic structures, up to a symplectomorphism on a six-dimensional Frobeniusian Lie algebra $(\G,\mathrm{d}\mu)$ for which $\mathrm{d}\mu$ admits a Lagrangian ideal, reduces to a classification of all flat torsion-free connections derived from the symplectic structure.
\end{remark}

Two symplectic Lie algebras $(\G_1,\omega_1)$ and $(\G_2,\omega_2)$ are said to be
symplectomorphically equivalent if there exists an isomorphism of Lie algebras $\varphi : \G_1\longrightarrow\G_2$, which preserves the symplectic forms in the sense $\varphi^*\omega_2=\omega_1$.  As a result of direct computation, we have:
\begin{pr}\label{Principale2}
Let $(\G,\omega)$ be a six-dimensional Frobeniusian indecomposable Lie algebra whose exact form does not admit a Lagrangian ideal. Then, $(\G,\omega)$ is isomorphic to exactly one of the following symplectic Lie algebras$:$
\begin{center}
\captionof{table}{Six-dimensional Frobeniusian indecomposable Lie algebras 
without Lagrangian ideal.}\label{symwithnoLag}
{\renewcommand*{\arraystretch}{1.4}
\small\begin{tabular}{l l }
\hline
		Algebra& Symplectic structure \\
			\hline
			$N^{\alpha\neq 0,\beta}_{6,35}$&$\omega^{\pm}_1=e^{16}+\frac{\beta}{\alpha}e^{26}\pm(e^{23}+\frac{1}{2}e^{45})$\\
&$\omega^{\pm}_2=\mu e^{12}+e^{16}+\frac{\beta}{\alpha}e^{26}\pm(e^{23}+\frac{1}{2}e^{45})$, $\mu\neq 0$\\
$N^{\alpha}_{6,37}$&$\omega^{\pm}_1=\pm (e^{16}+2e^{23}+e^{45})$\\
&$\omega^{\pm}_2=\nu e^{12}\pm (e^{16}+2e^{23}+e^{45})$, $\nu\neq 0$\\
$\G_{6,89}$&$\omega^\pm_1=\pm(e^{16}+\frac{1}{2}e^{24}+\frac{1}{2}e^{35})$\\
&$\omega^\pm_2=\pm(e^{16}+\frac{1}{2}e^{24}+\frac{1}{2}e^{35}+e^{46})$\\
$\G_{6,90}$&$\omega^{\pm}=\pm(e^{16}+\frac{1}{2}e^{24}+\frac{1}{2}e^{35})$\\
$\G_{6,92}^{\prime}$&$\omega^{\pm}=\pm(e^{16}+\frac{1}{2} e^{24}+\frac{1}{2}e^{35})$\\
$\G_{6,93}^{\nu_0=0}$&$\omega_1=e^{16}+\frac{1}{2}e^{24}+\frac{1}{2}e^{35}$\\
&$\omega_2=e^{16}+\frac{1}{2}e^{24}+\lambda e^{26}+\frac{1}{2}e^{35}$,~$\lambda\neq0$\\
$\G_{6,93}^{\nu_0\neq0}$&$\omega=e^{16}+\frac{1}{2}e^{24}+\frac{1}{2}e^{35}$
			\\\hline
\end{tabular}}
\end{center}
\end{pr}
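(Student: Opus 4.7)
The plan is to proceed by a case-by-case analysis starting from the complete list of six-dimensional indecomposable Frobeniusian solvable Lie algebras (as rectified in Appendix \ref{AppenB} from \cite{Tur3}, \cite{Muba1}, \cite{Com2}, \cite{Shaba}). For each algebra in that list, I would first check whether its exact symplectic form $\mathrm{d}\mu$ admits a Lagrangian ideal; those that do are already covered by Theorem \ref{Principtheo}, so the remaining candidates are precisely the algebras appearing in Table \ref{symwithnoLag} together with any that must be excluded. To rule out the existence of a Lagrangian ideal for each of $N_{6,35}^{\alpha,\beta}$, $N_{6,37}^{\alpha}$, $\G_{6,89}$, $\G_{6,90}$, $\G_{6,92}^\prime$, $\G_{6,93}^{\nu_0}$, I would enumerate all ideals $\mathfrak{j}\subset\G$ of dimension $3$, and for each one verify directly that $\mathrm{d}\mu|_{\mathfrak{j}\times\mathfrak{j}}\neq 0$ (noting that in these algebras the possible $3$-dimensional ideals are essentially forced by the descending central/derived series, so the list of candidates is short).

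Once the algebras of the table are isolated, the next step is to classify all symplectic $2$-forms on each of them up to symplectomorphism. I would compute $Z^2(\G)$ from the Chevalley--Eilenberg differential using the structure constants recorded in Appendix \ref{AppenB}, and then determine $H^2(\G)$ by quotienting by $B^2(\G)=\mathrm{d}(\G^\ast)$. Writing a general closed $2$-form as $\omega=\mathrm{d}\mu+\sum c_{ij}\,\overline{e}^{ij}$ where $\{\overline{e}^{ij}\}$ represents a basis of $H^2(\G)$, the non-degeneracy condition $\omega^3\neq 0$ yields open conditions on the parameters $c_{ij}$. After that, the automorphism group $\mathrm{Aut}(\G)$ acts on the space of non-degenerate closed $2$-forms, and two symplectic structures are equivalent if and only if they lie in the same orbit. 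I would parametrize $\mathrm{Aut}(\G)$ (using the derived flag and nilradical as invariant subspaces to constrain the form of $\psi$), compute $\psi^\ast\omega$, and solve for parameters $\psi$ that normalize the $c_{ij}$ to the canonical forms listed. The $\pm$ appearing in several entries reflects the determinant sign of $\psi$ restricted to an invariant plane, which cannot always be absorbed.

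The main obstacle will be the last step: for the parametric families $N_{6,35}^{\alpha,\beta}$ and $\G_{6,93}^{\nu_0\neq 0}$, the automorphism group depends on the parameters $\alpha,\beta,\nu_0$, so the orbit structure under $\mathrm{Aut}(\G)$ on the cohomology parameters $c_{ij}$ varies with these parameters. Thus I would need to carefully treat sub-cases (e.g.\ $\nu_0=0$ versus $\nu_0\neq 0$ for $\G_{6,93}$), and verify that the ``exact versus non-exact'' dichotomy reflected in $\omega_1$ versus $\omega_2$ corresponds to distinct orbits. For the algebras $\G_{6,89}$, $\G_{6,90}$, $\G_{6,92}^\prime$ this amounts to showing that $\dim H^2=1$ or $2$ modulo the automorphism action, from which uniqueness of $\omega$ (or of $\omega^{\pm}$) follows.

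Finally, for each line of Table \ref{symwithnoLag} I would verify non-existence of a Lagrangian ideal for the non-exact form as well: since every Lagrangian ideal would give rise, via Proposition \ref{Pr1isomo}, to a flat torsion-free connection on the $3$-dimensional quotient with a right-identity element (Proposition \ref{exactLagrang}), one can match against Proposition \ref{Pr 3flat} and derive a contradiction with the structure constants of the algebra in question. Combining all cases yields the stated list, and the exhaustiveness follows from the exhaustiveness of the starting classification of indecomposable solvable Frobeniusian Lie algebras in dimension six.
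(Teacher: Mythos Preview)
Your core plan --- compute $Z^2(\G)$ from the Maurer--Cartan equations, impose non-degeneracy, and normalize via the action of $\mathrm{Aut}(\G)$, using exactness to separate $\omega_1$ from $\omega_2$ --- is exactly what the paper does. The paper records the general closed $2$-forms for these algebras in Table~\ref{cocyclewithout} and then reduces by automorphisms, with the one-line remark that $\omega_1$ and $\omega_2$ are distinguished by exactness. So the main line of your argument is correct and matches the paper's approach.

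Your final paragraph, however, is both unnecessary and flawed as stated. The proposition only hypothesizes that the \emph{exact} form $\mathrm{d}\mu$ has no Lagrangian ideal; it does not assert (and the paper does not claim) that the non-exact forms $\omega_2$ lack one too, so there is nothing to verify here. More importantly, the argument you sketch misapplies Proposition~\ref{exactLagrang}: that result says the flat quotient $(\h,\nabla)$ carries a right-identity element precisely when the canonical form $\omega_0$ on the Lagrangian extension is \emph{exact}. If you start from a non-exact $\omega$ admitting a Lagrangian ideal $\mathfrak{j}$, Proposition~\ref{Pr1isomo} still yields a flat torsion-free connection on $\G/\mathfrak{j}$, but there is no reason for that connection to have a right-identity element, so you cannot match against the list in Proposition~\ref{Pr 3flat} to derive a contradiction. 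Simply drop this step.
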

\begin{proof}
The proof uses the automorphisms of symplectic Lie algebras. We can say that $\omega_1$ and $\omega_2$ are not symplectomorphically isomorphic (As a result of exactness).

\end{proof}

Proposition $\ref{Prclassifi}$ has a consequence, which we will now discuss.

\begin{co}\label{decompoFrob}
Let $(\G,\omega)$ be a six-dimensional Frobeniusian  decomposable Lie algebra. Then, $(\G,\omega)$ is isomorphic to exactly one of the following symplectic Lie algebras$:$
\begin{center}
\captionof{table}{Six-dimensional Frobeniusian decomposable Lie algebras.}\label{decoFro}
{\renewcommand*{\arraystretch}{1.4}
\small\begin{tabular}{l l l}
\hline
		Algebra& Symplectic structure&Remarks \\
			\hline
$\mathfrak{r}_2\mathfrak{r}_2\oplus\mathfrak{aff}(1,\R)$&$\omega=e^{12}+e^{34}+e^{56}+\tau_1 e^{13}+\tau_2 e^{15}+\tau_3 e^{35}$&$\tau_1,\tau_2,\tau_3\in\R$\\
$\mathfrak{r}_2^\prime\oplus\mathfrak{aff}(1,\R)$&$\omega= e^{14}+e^{23}+e^{56}+\tau_1 e^{12}+\tau_2 e^{15}+\tau_3 e^{25}$&$\tau_1,\tau_2,\tau_3\in\R$\\
$\mathfrak{d}_{4,1}\oplus\mathfrak{aff}(1,\R)$&$\omega_1=e^{12}-e^{34}+e^{56}$&\\
&$\omega_2=e^{12}-e^{34}+\tau e^{45}+e^{56}$&\\
&$\omega_3=e^{12}+e^{24}-e^{34}+\tau^\prime e^{45}+e^{56}$&\\
&$\omega_4=e^{12}+\tau^{\prime\prime}e^{24}+e^{25}-e^{34}+\tau^\prime e^{45}+e^{56}$&$\tau\neq0$, $\tau^{\prime\prime},\tau^\prime\in\R$\\
$\mathfrak{d}_{4,2}\oplus\mathfrak{aff}(1,\R)$&$\omega_1=e^{12}-e^{34}+e^{56}$&\\
&$\omega_2=e^{12}-e^{34}+\tau e^{45}+e^{56}$&\\
&$\omega_3=e^{12}-\frac{1+\epsilon}{\epsilon}e^{14}+e^{23}-e^{34}+e^{56}$&\\
&$\omega_4=e^{12}-\frac{1+\epsilon}{\epsilon}e^{14}+e^{23}-e^{34}+\tau^\prime e^{45}+e^{56}$&$\tau\tau^\prime\neq0$, $\epsilon=\pm1$\\
$\mathfrak{d}_{4,\lambda}\oplus\mathfrak{aff}(1,\R)$&$\omega_1=e^{12}-e^{34}+e^{56}$&\\
&$\omega_2=e^{12}-e^{34}+\tau e^{45}+e^{56}$&$\lambda\geq\frac{1}{2}$, $\lambda\neq 1,2$, $\tau\neq0$\\
$\mathfrak{d}_{4,\delta}^\prime\oplus\mathfrak{aff}(1,\R)$&$\omega=\pm(e^{12}-\delta e^{34})+\gamma e^{45}+e^{56}$&$\gamma\in\R, \delta>0$\\
$\mathfrak{h}_4\oplus\mathfrak{aff}(1,\R)$&$\omega_1^\pm=\pm(e^{12}-e^{34})+e^{56}$&\\
&$\omega_2^\pm
=\pm(e^{12}-e^{34})+\tau e^{45}+e^{56}$& $\tau\neq0$ 			
\\\hline
\end{tabular}}
\end{center}
\end{co}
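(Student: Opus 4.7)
The plan is to reduce the classification of six-dimensional Frobeniusian decomposable Lie algebras to a two-part problem: identifying the possible decomposition types, and then applying the Lagrangian extension machinery of Proposition \ref{Prclassifi} to each one.

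First I would establish the structural reduction. Suppose $\G=\G_1\oplus\G_2$ is a non-trivial direct sum decomposition as Lie algebras admitting an exact symplectic form $\omega=\mathrm{d}\mu$. Decomposing $\mu=\mu_1+\mu_2\in\G_1^\ast\oplus\G_2^\ast$, and noting that $[x,y]_\G=0$ whenever $x\in\G_1$ and $y\in\G_2$, we obtain $\omega=\mathrm{d}\mu_1+\mathrm{d}\mu_2$ with each $\mathrm{d}\mu_i\in\wedge^2\G_i^\ast$. Non-degeneracy of $\omega$ then forces both $\mathrm{d}\mu_i$ to be non-degenerate, so each factor is itself Frobeniusian. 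As Frobeniusian Lie algebras are even-dimensional, the only splitting of dimension six is $2+4$, and the unique two-dimensional Frobeniusian algebra is $\mathfrak{aff}(1,\R)$. Hence $\G\cong\mathfrak{g}_4\oplus\mathfrak{aff}(1,\R)$ for some four-dimensional Frobeniusian Lie algebra $\mathfrak{g}_4$.

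Next I would enumerate the admissible $\mathfrak{g}_4$. From the known low-dimensional classification of Frobeniusian solvable Lie algebras, the four-dimensional candidates are exactly $\mathfrak{r}_2\mathfrak{r}_2\cong\mathfrak{aff}(1,\R)\oplus\mathfrak{aff}(1,\R)$, $\mathfrak{r}_2'$, $\mathfrak{d}_{4,1}$, $\mathfrak{d}_{4,2}$, the family $\mathfrak{d}_{4,\lambda}$ with $\lambda\geq\tfrac{1}{2}$ and $\lambda\neq 1,2$, the family $\mathfrak{d}_{4,\delta}'$ with $\delta>0$, and $\mathfrak{h}_4$. These yield the seven families appearing in Table \ref{decoFro}. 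Having identified the algebras, I would invoke Theorem \ref{Principale} to guarantee that every symplectic form on $\G$ admits a Lagrangian ideal, provided the exact form does; this follows quickly from the block structure of $\mathrm{d}\mu$ and from the fact that each Frobeniusian factor possesses a Lagrangian ideal (for instance, an abelian subalgebra of maximal dimension).

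Using Proposition \ref{Prclassifi} together with Remark \ref{Remark1-1}, classifying symplectic forms on $\G$ up to symplectomorphism amounts to classifying triples $(\h,\nabla,[\alpha]_{L,\rho})$, where $\h=\G/\mathfrak{j}$ is a three-dimensional flat Lie algebra with right-identity element chosen from the list in Proposition \ref{Pr 3flat}. For each $\G$ I would write the general closed two-form as a basis expansion, use coboundaries to remove exact summands, and then normalize the remaining parameters by the product automorphism group $\mathrm{Aut}(\mathfrak{g}_4)\times\mathrm{Aut}(\mathfrak{aff}(1,\R))$, possibly enlarged by outer automorphisms when the two factors are isomorphic. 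The parameters $\tau,\tau',\tau'',\tau_1,\tau_2,\tau_3,\gamma$ listed in Table \ref{decoFro} are precisely the invariants that survive this normalization. The main obstacle will be the parameter bookkeeping: showing that the listed $\tau_i$ cannot be reduced further, and conversely that distinct entries in the table index genuinely non-symplectomorphic structures. This requires a careful description of the automorphism action on $Z^2(\G,\R)/B^2(\G,\R)$ for each pair, followed by matching the resulting orbits with the induced flat Lie algebra $(\h,\nabla)$ and its Lagrangian extension cohomology class. The analysis, though finite, is the most delicate part of the argument.
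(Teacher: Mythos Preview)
Your reduction to $\mathfrak{g}_4\oplus\mathfrak{aff}(1,\R)$ with $\mathfrak{g}_4$ four-dimensional Frobeniusian is correct, and your enumeration of the seven families of $\mathfrak{g}_4$ matches the table. The genuine gap is in the paragraph where you assert that ``each Frobeniusian factor possesses a Lagrangian ideal (for instance, an abelian subalgebra of maximal dimension)'' and then invoke Theorem~\ref{Principale} across the board. This is false for $\mathfrak{d}'_{4,\delta}$ with $\delta>0$: its exact symplectic form has no Lagrangian ideal, and consequently $\mathfrak{d}'_{4,\delta}\oplus\mathfrak{aff}(1,\R)$ is the one decomposable algebra in the list whose exact form has no Lagrangian ideal (the paper flags this explicitly in the introduction). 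An abelian subalgebra of maximal dimension need not be an ideal, and even when it is, it need not be isotropic; your parenthetical justification does not go through. Since Theorem~\ref{Principale} has the hypothesis that $\mathrm{d}\mu$ admits a Lagrangian ideal, and Proposition~\ref{Prclassifi} classifies only those Frobeniusian algebras whose exact form has one, neither tool applies to $\mathfrak{d}'_{4,\delta}\oplus\mathfrak{aff}(1,\R)$.

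The paper's proof handles the six families with Lagrangian ideal as a consequence of Proposition~\ref{Prclassifi} (identifying them among the $\G_{6,j}$ via Table~\ref{isodecompo}), and treats $\mathfrak{d}'_{4,\delta}\oplus\mathfrak{aff}(1,\R)$ separately by importing Ovando's result \cite{Ova} that every symplectic form on $\mathfrak{d}'_{4,\delta}$ is symplectomorphic to $\pm(e^{12}-\delta e^{34})$, and then appending the $\mathfrak{aff}(1,\R)$ part to obtain $\pm(e^{12}-\delta e^{34})+\gamma e^{45}+e^{56}$. Your scheme would work if you carve out this exceptional case and argue it directly via automorphisms, but as written it omits the one algebra that cannot be reached by the Lagrangian-extension machinery.
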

\begin{proof}
Except for $\mathfrak{d}^\prime_{4,\delta\neq0}\oplus\mathfrak{aff}(1,\R)$, all Frobeniusian decomposable Lie algebras given in Table $\ref{decoFro}$ admit a Lagrangian ideal. From \cite{Ova}, any symplectic form on $\mathfrak{d}^\prime_{4,\delta\neq0}$ is symplectomorphically equivalent to $\omega=\pm(e^{12}-\delta e^{34})$. As a consequence, any symplectic form on  $\mathfrak{d}^\prime_{4,\delta\neq0}\oplus\mathfrak{aff}(1,\R)$ is symplectomorphically equivalent to 
\begin{equation*}
\omega=\pm(e^{12}-\delta e^{34})+\gamma e^{45}+e^{56},~\gamma\in\R.
\end{equation*} 
\end{proof}
\section{Classification of Frobeniusian Lie algebras with Lagrangian ideals}\label{Classifisection}
In this section, we give the classification of six-dimensional Frobeniusian Lie algebras that have a Lagrangian ideal. We describe the main idea of our classification in the following paragraphs:\\
Every symplectic Lie algebra $(\G,\omega)$, which has a Lagrangian ideal $\mathfrak{j}$, arises as a Lagrangian extension of a flat Lie algebra \cite{BV}. In the quotient Lie algebra $\h=\G/\mathfrak{j}$, the flat torsion-free  connection $\nabla=\nabla^\omega$ satisfies the following relation
\begin{equation}\label{torsion-free}
\omega_\h(\nabla_{\overline{x}}\overline{y},z)=-\omega(y,[x,z]),~~\forall \overline{x},\overline{y}\in\h, z\in\mathfrak{j}.
\end{equation}

Let $(\G,\mathrm{d}\mu)$ be a six-dimensional Frobeniusian Lie algebra. Assume that, $\mathrm{d}\mu$ has a Lagrangian ideal $\mathfrak{j}$, then $(\G/\mathfrak{j},\nabla^{\mathrm{d}\mu})\cong(\h,\nabla,e)$ is a flat Lie algebra with a right-identity element $``e"$. Thus, the classification of this kind of Lie algebra reduces to that of three-dimensional  flat Lie algebras  with right-identity elements. Our main results fall into six steps:
\begin{enumerate}
\item[$1$.] Using Lemma \ref{righ-identity}, we can classify all flat Lie algebras with right-identity elements. This step is summarized in Table \ref{tableleftt}.

\item[$2$.] The reconstruction of Frobeniusian Lie algebras with Lagrangian ideal: Upon completion of the first step, we obtain the list of Frobeniusian Lie algebras with Lagrangian ideal in Proposition \ref{Prclassifi}. Combining Proposition \ref{exactLagrang}, Table \ref{tableleftt} and  \ref{tableofcoho} successively is required to complete this step. Consequently, we obtain all the symplectic forms that are not symplectomorphically equivalent, such that their associated flat torsion-free connections $\nabla^\omega$ arise from a flat Lie algebra with a right-identity element $(\h,\nabla,e)$.
\item[$3$.] We analyze symplectic forms on six-dimensional Frobeniusian
Lie algebras given by Proposition \ref{Prclassifi}. The proof follows by working on each Lie algebra. We first compute the 2-cocycles, (i.e., the 2-forms which verifies (\ref{cocy}) ) the next step is to compute the
rank of $\omega$. If $\omega$ has maximal rank, that is, $\wedge^3\omega\neq0$ then $\G$ will be endowed with a symplectic structure. Table \ref{cocycleofall} gives a description of all the 2-cocycles.

\item[$4.$] Using Table  \ref{cocycleofall}. For each symplectic Lie algebra described by Proposition \ref{Prclassifi}, we determine all possible Lagrangian ideals (As a result of theorem \ref{Principale}).

\item[$5.$] Let $\G_{\nabla,\alpha}=\tilde{\h}\oplus\tilde{\h}^\ast$ be a six-dimensional Frobeniusian Lie algebra (taken from Proposition \ref{Prclassifi}),  $\omega$ its symplectic form in general position (see Table \ref{cocycleofall}), and $\mathfrak{j}$ be a Lagrangian ideal of $(\G,\omega)$.  Then, $(\h=\G/\mathfrak{j},\psi^\ast\nabla^{\omega})$ is a flat Lie algebra, where, \footnote{To classify flat torsion-free connections $\nabla^{\omega}$ up to isomorphism, we use $\psi$.} $\psi : \h\longrightarrow\h$ is an automorphism, and $\psi^\ast\nabla^{\omega}_x=\psi\circ\nabla^{\omega}_{\psi^{-1}(x)}\circ\psi^{-1}$ for all $x\in\h$ (Recall  that the  flat torsion-free connection $\nabla^{\omega}$ on the quotient Lie algebra $\h=\G/\mathfrak{j}$ satisfies the relation $(\ref{torsion-free})$).
\begin{enumerate}
\item[$(i)$]  It is finished if  $(\h,\psi^\ast\nabla^{\omega})$ has a right-identity element, and $(\h,\psi^\ast\nabla^{\omega})\cong(\tilde{\h},\nabla)$. In accordance with Remark \ref{Remark1-1} and Table \ref{tableofcoho}, the non-symplectomorphically equivalent symplectic forms on $\G$ are given by
\begin{equation}\label{Geneform}
\omega_{[\alpha]_{L,\rho}}(x,y+\xi)=-\chi(x,y)\oplus\omega(x,\xi),~x\in\h, \xi\in\h^\ast,
 \end{equation}
 where $\chi=(\sigma-\mu)- {}^t\left(\sigma-\mu\right)\in\bigwedge^2\h^\ast$ for some $\sigma\in\mathcal{C}^1(\h,\h^\ast)$, and $\mu\in\mathcal{C}^1_L(\h,\h^\ast)$. Class $\alpha$ in $H^2_{L,\rho}(\h,\h^\ast)$ designed by $[\alpha]_{L,\rho}$.
\item[$(ii)$] If $(\B,\overline{\nabla})=(\h,\psi^\ast\nabla^{\omega})$ has no right-identity element or $(\B,\overline{\nabla})\ncong(\tilde{\h},\nabla)$, Table \ref{flatfromsymplectic} shows the classification of flat Lie algebras of this type. We compute the Lagrangian extension cohomology group $H^2_{L,\rho}(\B,\B^\ast)$ for the flat Lie algebra $\B$ (see Table \ref{cohoforflatsym}). Every symplectic structure on $\G_{\overline{\nabla},\alpha^\prime}=\B\oplus\B^\ast$ is thus symplectomorphically equivalent  to $\omega_{[\alpha^\prime]_{L,\rho}}$, where $\omega_{[\alpha^\prime]_{L,\rho}}$ is given as $(\ref{Geneform})$.  As a final step, an isomorphism is then applied to $\G_{\overline{\nabla},\alpha^\prime}$ to return $\G_{\nabla,\alpha}$ (see Table \ref{isofrom new to new}).
\end{enumerate}
\item[$6.$] The last step, and in order to get Turkowski  algebras \cite{Tur3} (Mubarakzyanov    algebras \cite{Muba1}), we send the new Frobeniusian Lie algebras (see Tables \ref{isoMuba} and \ref{isoTur}).
\end{enumerate}
\begin{remark}
Proposition $\ref{Prclassifi}$ provides a complete classification of six-dimensional Frobeniusian Lie algebras, as well as their symplectic structures. Despite the fact that it is not very common (but does exist), there are probably two algebras that are isomorphic. Based on Proposition $\ref{LEiso}$, we are able to locate them and eliminate one of them $($see Tables $\ref{isoTur}$ and $\ref{isoMuba})$.
\end{remark}
This method leads to the following result.

\begin{pr}\label{Prclassifi}
Let $(\G,\omega)$ be a six-dimensional Frobeniusian Lie algebra for which its exact form possesses a Lagrangian ideal. Then, $(\G,\omega)$ is isomorphic to exactly one of the following symplectic Lie algebras$:$
\end{pr}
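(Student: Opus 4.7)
The plan is to carry out the six-step procedure outlined just before the statement, anchored on the bijective correspondence of Proposition \ref{1-1 corress} between isomorphism classes of symplectic Lie algebras with Lagrangian ideal and triples $(\h,\nabla,[\alpha]_{L,\rho})$. Since the exact form $\mathrm{d}\mu$ on $\G$ carries a Lagrangian ideal $\mathfrak{j}$ by hypothesis, Proposition \ref{exactLagrang} says that the induced flat Lie algebra $(\h=\G/\mathfrak{j},\nabla^{\mathrm{d}\mu})$ has a right-identity $e$, and that the cocycle $\alpha$ representing $\G$ must satisfy the ``exactness'' constraint $\alpha(x,y)(e)=\gamma([x,y])$ for some $\gamma\in\h^\ast$. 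Thus the first reduction is purely algebraic: one runs over the complete list of three-dimensional flat Lie algebras $(\h,\nabla,e)$ with right-identity produced in Table~\ref{tableleftt} (Proposition \ref{Pr 3flat}), which already imposes a finite branching on $\h$.

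Second, for each such $(\h,\nabla,e)$ one determines the space of cocycles $\alpha\in Z^2_{L,\nabla}(\h,\h^\ast)$ obeying the Bianchi identity \eqref{symexten}, passes to the Lagrangian extension cohomology $H^2_{L,\nabla}(\h,\h^\ast)$, restricts further to those classes that satisfy the exactness constraint above, and records the resulting algebra $\G_{\nabla,\alpha}=\h\oplus\h^\ast$ by the brackets \eqref{Liebracket1}--\eqref{Liebracket2}. In this step one uses the natural map $H^2_{L,\nabla}(\h,\h^\ast)\to H^2_{\nabla}(\h,\h^\ast)$ and Lemma \ref{isofromLrhotorho} to normalise representatives, so that each non-isomorphic Lagrangian extension is written in a canonical basis; this yields the list of 47 algebras recorded in the proposition together with their \emph{exact} symplectic form $\omega_0$ of the cotangent type \eqref{formlagra}.

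Third, for each algebra $\G_{\nabla,\alpha}$ in the resulting list, one classifies all symplectic structures on $\G_{\nabla,\alpha}$ up to symplectomorphism, not only the exact one. Here Theorem \ref{Principale} is decisive: every symplectic form $\omega$ on $\G_{\nabla,\alpha}$ automatically has (at least one) Lagrangian ideal $\mathfrak{a}$. For each such $\omega$ one produces the flat quotient $(\B,\nabla^\omega)=(\G_{\nabla,\alpha}/\mathfrak{a},\psi^\ast\nabla^\omega)$ (normalised by an automorphism $\psi$), and two subcases arise as in step~5 of the outline: either $(\B,\nabla^\omega)\cong(\h,\nabla)$, in which case $\omega$ is symplectomorphically equivalent to the canonical form $\omega_{[\alpha]_{L,\rho}}$ of \eqref{Geneform} and the distinct classes are parametrised by $H^2_{L,\nabla}(\h,\h^\ast)$; or $(\B,\nabla^\omega)\not\cong(\h,\nabla)$, in which case Proposition \ref{LEiso} and Proposition \ref{diffiso} allow one to transport $\omega$ to a canonical form on $\G_{\nabla^\omega,\beta}$ and then back to $\G_{\nabla,\alpha}$ via the explicit isomorphism produced by Lemma \ref{pont}. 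Uniqueness of the representative on each line of the tables will follow by matching the class in $H^2_{L,\rho}$ through Proposition \ref{diffiso}.

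The final step is bookkeeping: one matches each $\G_{\nabla,\alpha}$ with an algebra in the Turkowski/Mubarakzyanov catalogue by computing a few invariants (dimension of the nilradical, eigenvalue structure of $\ad$ on the radical, rank of the derived series), which provides the labels $N_{6,\cdot}$ and $\G_{6,\cdot}$ used in the tables. The computational bulk, and therefore the main obstacle, is the cohomological step: for each of the 23 families in Table \ref{tableleftt} one must compute $H^2_{L,\nabla}(\h,\h^\ast)$, intersect with the Bianchi and exactness constraints, and then classify extensions up to the action of $\mathrm{Aut}(\h)$; the subtlety is that this auto\-morphism action can collapse a priori distinct cohomology classes (especially in the continuous families $\h_{10}^{\lambda,a,b}$, $\h_{19}^{\alpha,\gamma}$, $\h_{22}^{\beta,\delta_1}$), and a few different flat algebras can give rise to isomorphic extensions, as illustrated by the example preceding Proposition \ref{LEiso}. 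These collapses and coincidences are the source of the long case distinctions in the tables and are handled by comparing flat structures and $H^2_{L,\rho}$-classes via Proposition \ref{diffiso}, with the explicit computations relegated to Appendix \ref{AppenA}.
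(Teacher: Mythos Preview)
Your proposal is correct and follows essentially the same route as the paper: enumerate the three-dimensional flat Lie algebras with right-identity from Table~\ref{tableleftt}, compute the cohomology groups $H^2_\rho$ and $H^2_{L,\rho}$ (Table~\ref{tableofcoho}) to obtain both the Lie-algebra structure (via $[\alpha]_\rho$) and the symplectic forms (via $[\alpha]_{L,\rho}$ and the formula~\eqref{Geneform}), invoke Theorem~\ref{Principale} so that every symplectic form on the resulting algebra again has a Lagrangian ideal, and then treat the two subcases according to whether the new quotient flat algebra $(\B,\nabla^\omega)$ coincides with the original $(\h,\nabla)$ or not, transporting back via the isomorphisms of Lemma~\ref{pont} and Proposition~\ref{LEiso}. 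One minor point of order: in the paper the Lie-algebra structure is fixed by reducing $\alpha$ modulo the \emph{ordinary} coboundaries $B^2_\rho$ first (Lemma~\ref{isomor1}), and only afterwards is $H^2_{L,\rho}$ used to parametrise symplectic forms on that fixed algebra; your description interchanges these two reductions, but Lemma~\ref{isofromLrhotorho}, which you cite, is precisely what makes the two orderings equivalent.
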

\begin{flushleft}
{\renewcommand*{\arraystretch}{1.4}
\small
}
\end{flushleft}

\begin{proof}
This is the general scheme of the proof. We take a flat Lie algebra, say $(\h,\nabla)$, from the ones obtained in Proposition \ref{Pr 3flat}. This flat Lie algebra has a basis   $\mathcal{B}=\{e_1,e_2,e_3\}$. Let $(\G_{\nabla,\alpha},\omega)$ be the Lagrangian extension of the flat Lie algebra $(\h,\nabla)$ with respect to $\alpha\in Z^2_{L,\rho}(\h,\h^\ast)$. Let $\sigma\in\mathcal{C}^1(\h,\h^\ast)$ and $\alpha^\prime\in Z^2_{L,\rho}(\h,\h^\ast)$, such that, $\alpha^\prime=\alpha-\partial_\rho\sigma$. In this case, $\G_{\nabla,\alpha}\cong\G_{\nabla,\alpha^\prime}$, thus, Lie algebra $\G_{\nabla,\alpha^\prime}$ will be the subject of the scheme, this Lie algebra is isomorphic to a family
of Lie algebras in Proposition \ref{Prclassifi}, and their Lie brackets are given by $(\ref{Liebracket1})$ and $(\ref{Liebracket2})$. By doing so, we collect all the Lie algebras that are isomorphic to $\G_{\nabla,\alpha^\prime}$. The list of these isomorphisms can be found in Table \ref{tableofcoho}.

After that, we proceed to classify symplectic forms on $\G_{\nabla,\alpha^\prime}$, that have a flat torsion-free connection $\nabla^\omega$ coming from $(\h,\nabla)$. This is accomplished by using one-to-one correspondences between Lagrangian extensions over flat Lie algebras and the group $H^2_{L,\rho}(\h,\h^\ast)$ (see, Proposition \ref{1-1 corress}). Let $\mu\in\mathcal{C}^1_L(\h,\h^\ast)$, such that, $\alpha-\partial_\rho\mu=\alpha^{\prime\prime}$ for some $\alpha^{\prime\prime} \in Z^2_{L,\rho}(\h,\h^\ast)$. It is easily verified that then the map 
\begin{equation}
\Phi:(\G_{\nabla,\alpha^{\prime\prime}},\omega)\longrightarrow (\G_{\nabla,\alpha^\prime},\omega),~~(x,\xi)\longmapsto (x,\xi+(\sigma-\mu)(x))
\end{equation}
 is the required isomorphism of Lagrangian extensions over $\h$. Table \ref{tableofcoho} shows the Lagrangian extension cohomology group $H^2_{L,\rho}(\h,\h^\ast)$, for all flat Lie algebras with right-identity elements. Therefore,  every symplectic form $\omega$ on $\G_{\nabla,\alpha^\prime}$ that has flat torsion-free connection $\nabla^\omega$ coming from $(\h,\nabla)$  is symplectomorphically equivalent to 
 \begin{eqnarray*}
 \omega_{[\alpha]_{L,\rho}}(x,y+\xi)=-\chi(x,y)\oplus\omega(x,\xi),~x\in\h, \xi\in\h^\ast,
 \end{eqnarray*}
 where $\chi=(\sigma-\mu)- {}^t\left(\sigma-\mu\right)\in\bigwedge^2\h^\ast$.

  For an illustration, we compute two different examples, the others cases are treated in the Appendix \ref{AppenA}. These examples have the advantage of using all the techniques needed in the general case. Let us apply the scheme above to $(\h_1,\nabla)$ (see, Table \ref{tableleftt}), with its basis $\mathcal{B}= \{e_1, e_2, e_3\}$, where the flat torsion-free connection $\nabla$ is given by
 \begin{equation*}
 \nabla_{e_1}e_3=e_1,~ \nabla_{e_2}e_3=e_2, \nabla_{e_3}e_j=e_j,~j=1,2,3.
 \end{equation*}
Let $(\G_{\nabla,\alpha},\omega)$ be the Lagrangian extension of the flat
Lie algebra $(\h_1, \nabla)$ with respect to $\alpha\in Z^2_{L,\rho}(\h_1,\h_1^\ast)$.   
Let $\mathcal{B}^\ast=\{e^1,e^2,e^3\}$ be the dual basis of $\mathcal{B}$ that generates $\h^\ast$. Let $\alpha\in C^2(\h_1,\h_1^\ast)$  given by
\begin{center}
$\alpha(e_i,e_j)=\sum_{k=1}^3 \alpha_{ijk}e^k,~1\leq i<j\leq 3,~~\alpha_{ijk}\in\R.$
\end{center}
It follows that $\alpha\in Z^2_{L,\rho}(\h_1,\h_1^\ast)$ if and only if $\alpha$ is as follows:
\begin{center}
$\alpha(e_1,e_2)=(\alpha_{135}-\alpha_{234})e^3,~\alpha(e_1,e_3)=
\sum^3_{k=1}\alpha_{13k}e^k,~\alpha(e_2,e_3)=\sum^3_{k=1}\alpha_{23k}e^k$.
\end{center}
It is straightforward to verify that, $H^2_\rho(\h_1,\h^\ast_1)=0$, in this case, $[\alpha]_\rho=0$, that is, $\alpha^\prime=0$.

Let $\sigma\in\mathcal{C}^1(\h_1,\h^\ast_1)$, and $x,y,z\in\h_1$. Then
\begin{align*}
\big(\partial_\rho\sigma\big)(x,y)&=\rho(x)\sigma(y)-\rho(y)\sigma(x)-\sigma([x,y])\\
&=-\sigma(y)\circ\nabla_x+\sigma(x)\circ\nabla_y-\sigma([x,y])
\end{align*} 
and, hence,
\begin{align}\label{partialforsigma}
\big(\partial_\rho\sigma\big)(x,y)(z)&=-\sigma(y)(\nabla_xz)+\sigma(x)(\nabla_yz)-\sigma([x,y])(z).
\end{align}
Let $\mu\in\mathcal{C}^1_L(\h_1,\h^\ast_1)$, i.e., $\mu(e_j)=\sum^3_{k=1}\mu_{jk}e^k$. Using (\ref{partialforsigma}) we compute
\begin{eqnarray*}
&&\big(\partial_\rho\mu\big)(e_1,e_3,e_1)=\mu_{11},~~
\big(\partial_\rho\mu\big)(e_1,e_3,e_2)=\mu_{12},~~
\big(\partial_\rho\mu\big)(e_1,e_3,e_3)=0,\\
&&\big(\partial_\rho\mu\big)(e_2,e_3,e_1)=\mu_{12},~~
\big(\partial_\rho\mu\big)(e_2,e_3,e_2)=\mu_{22},~~
\big(\partial_\rho\mu\big)(e_2,e_3,e_3)=0,\\
&&\big(\partial_\rho\mu\big)(e_1,e_2,e_j)=0,~j=1,2,3.
\end{eqnarray*}
This shows that 
\begin{center}
$B^2_{L,\rho}(\h_1,\h^\ast_1)=\left\{\mu\in\mathrm{Hom}(\bigwedge^2\h_1,\h^\ast_1)~\Big|~
    \begin{array}{l}
      \mu(e_1,e_2,e_j)=0,~~
       \mu(e_1,e_3,e_3)=0,\\
       \mu(e_2,e_3,e_3)=0.
    \end{array}
\right\}$.
\end{center}
In particular,
\begin{center}
$H^2_{L,\rho}(\h_1,\h^\ast_1)\cong\left\{\mu\in\mathrm{Hom}(\bigwedge^2\h_1,\h^\ast_1)~\Big|~
    \begin{array}{l}
      \mu(e_1,e_2,e_3)=\tau_1,~~
       \mu(e_1,e_3,e_3)=\tau_2,\\
       \mu(e_2,e_3,e_1)=-\tau_1,~
       \mu(e_2,e_3,e_3)=\tau_3.
    \end{array}
\right\}$.
\end{center}
Due to the fact that $H^2_\rho(\h_1,\h^\ast_1)=0$, the form $\omega$ verifies the conditions of Proposition \ref{exactLagrang}, therefore it is an exact form.  By identifying the basis $\{e_4,e_5,e_6\}$ with the dual basis
of $\{e_1,e_2,e_3\}$. As $\chi(x,y)=\tau_1 e^{12}+\tau_2 e^{13}+\tau_3e^{23}$, any symplectic form $\omega$ on $\G_{\nabla,0}$ whose flat torsion-free connection $\nabla^\omega$ coming from $(\h_1,\nabla)$ is symplectomorphically isomorphic to 
\begin{equation}\label{allofthemsympl}
\omega_{[\alpha]_{L,\rho}}=\tau_1 e^{12}+\tau_2 e^{13}+\tau_3e^{23}+e^{14}+e^{25}+e^{36}.
\end{equation}

Study symplectic forms on $\G_{\nabla,0}$. The Lie brackets over the basis $\mathcal{B}_0=\{e_1,e_2,e_3,e_4,e_5,e_6\}$ are given
\begin{eqnarray*}
&&[e_1,e_4]=-e_6,~[e_2,e_5]=-e_6,~[e_3,e_4]=-e_4,~[e_3,e_5]=-e_5,~[e_3,e_6]=-e_6,
\end{eqnarray*}
Taking the dual basis
$\{e^1,e^2,e^3,e^4,e^5,e^6\}$, the Maurer-Cartan equations of the algebra are
\[\mathrm{d}e^1=\mathrm{d}e^2=\mathrm{d}e^3=0,~\mathrm{d}e^4=-e^{34},~\mathrm{d}e^5-e^{35},~\mathrm{d}e^6=-e^{14}-e^{25}-e^{36}.\]
 Now define an element $\omega\in\bigwedge^2(\G_{\nabla,0})^\ast$
in general position by
\begin{equation*}
\omega=\omega_{12}e^{12}+\omega_{13}e^{13}+\omega_{23}e^{23}+\omega_{34}e^{34}+\omega_{35}e^{35}+\omega_{36}\mathrm{d}e^6,~~\omega_{36}\neq0.
\end{equation*}
The following is a list of possible Lagrangian ideals
\[\mathfrak{j}_1=\langle e_4,e_5,e_6\rangle,~\mathfrak{j}_2=\langle e_1,e_2,e_6\rangle,~\mathfrak{j}_3=\langle e_2,e_4,e_6\rangle,~\mathfrak{j}_4=\langle e_1,e_5,e_6\rangle.\]

The flat torsion-free   connection $\nabla^\omega$ that meets the relation (\ref{torsion-free}) in the quotient Lie algebra $\h=\G_{\nabla,0}/\mathfrak{j}_1$  coinsides with the flat torsion-free connection of $\nabla$ associated with $(\h_1,\nabla)$. In this case, we obtain $(\h=\G_{\nabla,0}/\mathfrak{j}_1,\nabla^\omega)\cong(\h_1,\nabla)$ ($\psi=\mathrm{I}_3$). Therefore, any symplectic form on $\G_{\nabla,0}$ is symplectomorphically equivalent to  $\omega_{[\alpha]_{L,\rho}}$ given by $(\ref{allofthemsympl})$.

The next example will be different from the previous one. Lie algebra $\G_{6,11,2}^{\lambda\neq-1}=\h_{11,2}^{\lambda\neq-1}\oplus(\h_{11,2}^{\lambda\neq-1})^\ast$ with its basis $\mathcal{B} = \{e_1,e_2, e_3, e_4,e_5,e_6\}$ where the non-zero Lie brackets are given in Proposition \ref{Prclassifi}. The Lagrangian extension cohomology group and the relative cohomology of the flat Lie algebra $\h_{11,2}^{\lambda\neq-1}$ are given in Table \ref{tableofcoho}, and its flat torsion-free connection $\nabla$ is given in Proposition \ref{Pr 3flat} (Table \ref{tableleftt}). Using the same procedure as the previous example, any symplectic structure on $\G_{6,11,2}^{\lambda\neq-1}$ which has a flat torsion-free connection $\nabla^\omega$ comes from that of $(\h_{11,2}^{\lambda\neq-1},\nabla)$ is is symplectomorphically equivalent to
\begin{equation}\label{sympexem2}
\omega_{[\alpha]_{L,\rho}}=-\lambda\tau e^{13}+e^{14}+e^{25}+e^{36},~\tau\in\R,~\lambda\neq0.
\end{equation}
Now define an element $\omega\in\bigwedge^2(\G_{6,11,2}^{\lambda\neq-1})^\ast$ in general position, in this case, we have two families based on the value of $\lambda$.

$\bullet$ If $\lambda\neq\pm1$. Then, 
\[\omega=\omega_{13}e^{13}+\omega_{23}e^{23}+\omega_{15}(e^{15}+\tfrac{1+\lambda}{\lambda}e^{35})+\omega_{34}(\lambda e^{14}+\lambda e^{25}+e^{34})+\lambda\omega_{36}\mathrm{d}e^6,\] 
with, $\Omega=\omega_{36}(\lambda\omega_{34}+\omega_{36})\neq0$.
Note that, $\mathfrak{j}=\langle e_4,e_5,e_6\rangle$ is a Lagrangian ideal of $(\G_{6,11,2}^{\lambda\neq-1},\omega)$, and the quotient flat Lie algebra  $(\h=\G_{6,11,2}^{\lambda\neq\pm1}/\mathfrak{j},\psi^\ast\nabla^\omega)$ is isomorphic to $(\h_{11,2}^{\lambda\neq\pm1},\nabla)$, where $\psi$ is given by
\begin{eqnarray*}
\psi :\h\longrightarrow\h : \overline{ e}_1\mapsto \overline{e}_1,~\overline{e}_2\mapsto \overline{e}_2,~\overline{e}_3\mapsto\tfrac{\omega_{15}}{\Omega}\overline{e}_2+\overline{e}_3.
\end{eqnarray*}

As an immediate reason,  any symplectic structure on $\G_{6,11,2}^{\lambda\neq\pm1}$ is symplectomorphically equivalent to the form given by $(\ref{sympexem2})$.

$\bullet$ If $\lambda=1$. In this case, symplectic structures on $\G_{6,11,2}^{\lambda=1}$ have the following form:
\[\omega=\omega_{13}e^{13}+\omega_{23}e^{23}+\omega_{34}(e^{14}+ e^{25}+e^{34})+\omega_{15}(e^{15}+2 e^{35})+\omega_{26}(-e^{24}+e^{26})+\omega_{36}\mathrm{d}e^6,\] 
with $\Omega=(\omega_{3 4} + \omega_{3 6})(\omega_{1 5}\omega_{2 6} - \omega_{3 4}\omega_{3 6} - \omega_{3 6}^2)\neq0$.

Let us perform the scheme above for the  Lagrangian ideal $\mathfrak{j}=\langle e_4,e_5,e_6\rangle$ of $(\G_{6,11,2}^{\lambda=1},\omega)$.  It is straightforward to verify that the associated flat torsion-free connection $~\overline{\nabla}=\psi^\ast\nabla^\omega$ on the quotient Lie algebra $\B=\G_{6,11,2}^{\lambda=1}/\mathfrak{j}$ is 
\begin{eqnarray*}
&&\overline{\nabla}_{\overline{e}_1}\overline{e}_1=\overline{e}_1,~\overline{\nabla}_{\overline{e}_1}\overline{e}_2=-\epsilon\overline{e}_1+\overline{e}_2+\epsilon\overline{e}_3,~\overline{\nabla}_{\overline{e}_1}\overline{e}_3=\overline{e}_1,\\
&&\overline{\nabla}_{\overline{e}_2}\overline{e}_1=-\epsilon\overline{e}_1+\overline{e}_2+\epsilon\overline{e}_3,~\overline{\nabla}_{\overline{e}_2}\overline{e}_3=-\epsilon\overline{e}_1+\overline{e}_2+\epsilon\overline{e}_3,\\
&&\overline{\nabla}_{\overline{e}_3}\overline{e}_1=\overline{e}_1,~\overline{\nabla}_{\overline{e}_3}\overline{e}_2=-\epsilon\overline{e}_1+2\overline{e}_2+\epsilon\overline{e}_3,~\overline{\nabla}_{\overline{e}_3}\overline{e}_3=\overline{e}_3,
\end{eqnarray*}
where, $\epsilon=0~\text{or}~1$, and $\psi$ is given in Table $\ref{flatfromsymplectic}$ (see, case $\B_1$). Note that, $(\B,\overline{\nabla})\cong(\h_{11,2}^{\lambda=1},\nabla)$ when $\epsilon=0$, in this case, we have previously classified the symplectic structures on $\G_{6,11,2}^{\lambda=1}$ whose flat torsion-free connection comes from $(\h_{11,2}^{\lambda=1},\nabla)$. We will examine the situation when, $\epsilon=1$. Furthermore, and in this case, $(\B_{\epsilon=1},\overline{\nabla})$ has no right-identity element. We have, $H^2_\rho(\B_{\epsilon=1},\B_{\epsilon=1}^\ast)=0$ (see for instant,  Table \ref{cohoforflatsym}, case $\B_{1}$), and  the Lagrangian extension cohomology group
\begin{center}
$H^2_{L,\rho}\big(\B_{\epsilon=1},\B_{\epsilon=1}^\ast\big)=\Big\{\mu\in\mathrm{Hom}\big(\bigwedge^2\B_{\epsilon=1},\B_{\epsilon=1}^\ast\big)~\big|~\mu(e_1,e_3,e_1)=\tau^\prime,~
\mu(e_1,e_3,e_3)=\tau^\prime\Big\}$.
\end{center}
In this regard, it is straightforward to verify  that, $\chi(x,y)=\tau^\prime e^{13}$. As a consequence, any symplectic structure on $\G_{\overline{\nabla},\alpha^\prime}=\B_{\epsilon=1}\oplus\B_{\epsilon=1}^\ast$ whose flat torsion-free connection isomorphic to $\overline{\nabla}$ of $(\B_{\epsilon=1},\overline{\nabla})$ is  symplectomorphically equivalent to
\[\omega_{[\alpha^\prime]_{L,\rho}}=\tau^\prime e^{13}+e^{14}+e^{25}+e^{36}.\]
Then, by a symplectomorphism (see for instant Table \ref{isofrom new to new}), we send the symplectic structure $\omega_{[\alpha^\prime]_{L,\rho}}$ to the starting Lie algebra $\G_{6,11,2}^{\lambda=1}$, and we find all the non-symplectomorphically equivalent symplectic forms on $\G_{6,11,2}^{\lambda=1}$. This last algebra should be sent to the list given in \cite{Tur3} or \cite{Muba1}. The list of such isomorphisms are given in Tables  \ref{isoTur} and \ref{isoMuba}. In this way, the proof is complete.

\end{proof}
\begin{remark}
\begin{enumerate}
\item  $\G_{6,22,2}^{\beta=\frac{1}{2\delta_1}}$ with $a\neq 0$ and $\G_{6,18,2}^{\eta=\frac{1}{2}}$ $($with $b^2-ac<0$, $abc\neq0)$ represent new Lie algebras that don't appear in \textsc{\cite{Muba1}} and \textsc{\cite{Shaba}}, and their nilradical is isomorphic to $ [e_2, e_4] = e_1, [e_3, e_5] = e_1$, so they belong to Table $7$ of \textsc{\cite{Muba1}}.

\item The eigenvalues of the adjoint representation of $\G_{6,22,2}^{\beta=\frac{1}{2\delta_1}}$ are  $\{0,-\frac{1}{\delta_1},\frac{-1+2i\delta_1}{2\delta_1},-\frac{1+2i\delta_1}{2\delta_1}\}$, and this algebra is closer to Mubarakzyanov algebras $\G_{6,88}$ and $\G_{6,92}$. The adjoint representation of the algebra $\G_{6,88}$ has six distinct eigenvalues, indicating that it is never isomorphic to $\G_{6,22,2}^{\beta=\frac{1}{2\delta_1}}$. For $a=0$, we have already proven the following isomorphism $\G_{6,22,2}^{\beta=\frac{1}{2\delta_1}}\cong \G_{6,92}$ $($see Table $\ref{isoMuba})$. If $a\neq0$,  we are comparing algebra $\G_{6,22,2}^{\beta=\frac{1}{2\delta_1}}$, which has a unique symplectic structure and admits one Lagrangian ideal, with algebra $\G_{6,92}$, wich also has a unique symplectic structure that includes two Lagrangian ideals. Therefore, $\G_{6,22,2}^{\beta=\frac{1}{2\delta_1}}$ and $\G_{6,92}$ are not isomorphic.

\item The algebra $\G_{6,22,2}^{\beta=\frac{1}{2\delta_1}}$ with $a\neq0$ $(\text{resp.}~\G_{6,18,2}^{\eta=\frac{1}{2}}$ $($with $b^2-ac<0$, $abc\neq0))$ is represented by $\G_{6,92}^{\prime\prime}$ $(\text{resp.}~\G_{6,92}^{\prime\prime\prime})$ in Theorem $\ref{Principtheo}$.
\end{enumerate}

\end{remark}
\newpage
\section{Appendix A}\label{AppenA}
\subsection{Cohomological properties of flat Lie algebras}

\begin{Le}
The Lagrangian extension cohomology group and the relative cohomology of any three-dimensional real flat  Lie algebra  $(\h,\nabla)$ with a right-identity element are presented in the following table$:$
\end{Le}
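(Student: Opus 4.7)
The computation proceeds case by case through the twenty-three flat Lie algebras $(\h, \nabla)$ listed in Table \ref{tableleftt}. For a fixed such $(\h, \nabla)$, my plan is to write out the dual representation $\rho = \rho^\nabla : \h \to \mathrm{End}(\h^\ast)$ in the basis dual to $\{e_1, e_2, e_3\}$ using the formula $\rho(x)\xi = -\xi \circ \nabla_x$, and then to set up the spaces $\mathcal{C}^1_L(\h, \h^\ast)$, $\mathcal{C}^2_L(\h, \h^\ast)$, and the coboundary $\partial_\rho$ as explicit systems of linear equations. Writing $\phi \in \mathcal{C}^1(\h, \h^\ast)$ with coefficients $\phi_{ij}$ defined by $\phi(e_i) = \sum_j \phi_{ij} e^j$, the Lagrangian condition $\phi(x)(y) = \phi(y)(x)$ becomes the symmetry $\phi_{ij} = \phi_{ji}$, so $\mathcal{C}^1_L(\h, \h^\ast)$ is a six-dimensional subspace of $\mathrm{Hom}(\h, \h^\ast)$. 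Similarly, writing $\alpha(e_i, e_j) = \sum_k \alpha_{ijk} e^k$ for $1 \leq i < j \leq 3$, the Bianchi identity $\alpha(x,y)(z) + \alpha(y,z)(x) + \alpha(z,x)(y) = 0$ reduces to the single linear relation $\alpha_{123} - \alpha_{132} + \alpha_{231} = 0$ (plus the two derived from cyclic permutation, which give the same constraint).

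Next, for each algebra I would compute $Z^2_{L,\rho}(\h, \h^\ast) = \mathcal{C}^2_L(\h, \h^\ast) \cap Z^2_\rho(\h, \h^\ast)$ by imposing the cocycle condition
\begin{equation*}
\rho(x)\alpha(y,z) - \rho(y)\alpha(x,z) + \rho(z)\alpha(x,y) - \alpha([x,y],z) + \alpha([x,z],y) - \alpha([y,z],x) = 0
\end{equation*}
on the triples of basis vectors, and then determine $B^2_{L,\rho}(\h, \h^\ast) = \partial_\rho \mathcal{C}^1_L(\h, \h^\ast)$ by evaluating $(\partial_\rho\phi)(x,y)(z) = -\phi(y)(\nabla_x z) + \phi(x)(\nabla_y z) - \phi([x,y])(z)$ on the basis for every symmetric $\phi$. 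The resulting quotient is $H^2_{L,\rho}(\h, \h^\ast)$. To obtain the comparison datum (the map to $H^2_\rho(\h, \h^\ast)$ or its kernel $\kappa_L$), I would perform the analogous computation without the Lagrangian restriction, so that $H^2_\rho(\h, \h^\ast) = Z^2_\rho(\h, \h^\ast)/B^2_\rho(\h, \h^\ast)$, and then record the induced map as in the general formula
\begin{equation*}
\kappa_L = \frac{B^2_\rho(\h, \h^\ast) \cap Z^2_{L,\rho}(\h, \h^\ast)}{B^2_{L,\rho}(\h, \h^\ast)}.
\end{equation*}

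The existence of a right-identity element $e$ (which for most algebras in the table can be chosen as $e_3$, $e_1$, $\lambda e_3$, etc., as described in Lemma \ref{righ-identity}) simplifies the cocycle equations considerably, since $\varrho(e) = \mathrm{I}_\h$ forces many of the matrix entries of $\nabla_{e_i}$ to be rigid, which in turn fixes a large block of relations among the $\alpha_{ijk}$. In particular, the equation $\alpha(e, x)(y)$ combined with the Bianchi identity often pins down the values $\alpha(x,y)(e)$ modulo coboundaries, recovering the condition identified in Proposition \ref{exactLagrang}.

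The routine but unavoidable obstacle is sheer volume: twenty-three separate linear systems in at most $27$ variables $\alpha_{ijk}$ modulo $18$ variables $\phi_{ij}$ (with the Lagrangian symmetry), which must each be reduced to echelon form and a canonical representative chosen in each cohomology class. Families with a continuous parameter (e.g.\ $\h_{10}^{\lambda, a, b}$, $\h_{19}^{\alpha,\gamma}$) require an additional case split whenever a denominator or an eigenvalue difference vanishes, since these give sporadic jumps in the dimension of the cohomology; tracking those jumps and consistently eliminating them against $B^2_{L,\rho}$ is where care is most needed. The output is then tabulated, one row per flat Lie algebra, listing a basis of representatives for $H^2_{L,\rho}(\h, \h^\ast)$ together with the associated class in $H^2_\rho(\h, \h^\ast)$, which is precisely the content of the table following the statement.
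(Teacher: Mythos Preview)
Your proposal is correct and follows essentially the same approach as the paper. The paper does not present a separate proof for this Lemma; the table is offered as computational output, but the method is illustrated explicitly for $\h_1$ in the proof of Proposition~\ref{Prclassifi}, where they write $\alpha(e_i,e_j)=\sum_k \alpha_{ijk}e^k$, impose the cocycle and Bianchi conditions, compute $\partial_\rho\mu$ on $\mathcal{C}^1_L(\h,\h^\ast)$ via $(\partial_\rho\mu)(x,y)(z)=-\mu(y)(\nabla_xz)+\mu(x)(\nabla_yz)-\mu([x,y])(z)$, and read off the quotient---exactly your procedure.
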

{\renewcommand*{\arraystretch}{1.23}
\captionof{table}{Cohomological properties of flat Lie algebras with right-identity element.}\label{tableofcoho}
\small\begin{longtable}{lll}
			\hline
		Flat Lie algebra& $H^2_{L,\rho}(\h,\h^\ast)$&$H^2_{\rho}(\h,\h^\ast)$\\\hline
		$\h_1$&$\left[\begin{array}{l}\alpha_{12}\mapsto \tau_1 e^\ast_3\\ \alpha_{13}\mapsto \tau_2 e^\ast_3\\\alpha_{23}\mapsto -\tau_1 e_1^\ast+\tau_3 e^\ast_3\end{array}\right] $&$0$
		\\
		$\h_2^{b}$&$\left[\begin{array}{l}
		\alpha_{12}\mapsto -\tau_1be_1^\ast+b(\tau_1+\tau_2)e_2^\ast+\tau_3e_3^\ast\\
	\alpha_{13}\mapsto\tau_3e_2^\ast-\tau_1e_3^\ast\\
	\alpha_{23}\mapsto-\tau_2e_3^\ast	
		\end{array}\right]$&$0$
		\\
	$\h_3$	&$\left[\begin{array}{l} \alpha_{12}\mapsto \tau_1 e_3^\ast\\
\alpha_{13}\mapsto\tau_2 e_3^\ast\\ \alpha_{23}\mapsto-\tau_1 e_1^\ast+\tau_3e_3^\ast\end{array}\right] 
$&$0$
		\\
	$\h_{4}^{a}$	& $\left[\alpha_{13}\mapsto\tau e_1^\ast\right]$&$0$\\
	$\h_{5}$&$\left[\alpha_{13}\mapsto \tau e_1^\ast\right]$&$0$\\
	$\h_{6}$&$\left[\alpha_{13}\mapsto\tau e_1^\ast\right]$&$0$\\
	$\h_{7}$&$\left[\alpha_{13}\mapsto\tau e_1^\ast\right]$&$0$\\
	$\h_{8}$&$\left[\alpha_{13}\mapsto\tau e_1^\ast\right]$&$0$\\
	$\h_{9}^{a}$&$\left[\alpha_{13}\mapsto\tau e_1^\ast\right]$&$0$\\
	$\h_{10,1}^{a,b,\lambda=-1}$&$\left[\alpha_{13}\mapsto\tau e_3^\ast\right]$&$0$\\
	$\h_{10,2}^{a,b,\lambda\neq-1}$&$\left[\alpha_{13}\mapsto\tau e_3^\ast\right]$&$0$\\
	$\h_{11,1}^{\lambda=-1}$&$\left[\begin{array}{l} \alpha_{12}\mapsto a e_1^\ast\\
\alpha_{13}\mapsto\tau e_3^\ast\\\alpha_{23}\mapsto b e_3^\ast\end{array}\right]$&$\left[\begin{array}{l}\alpha_{12}\mapsto ae_1^\ast\\
\alpha_{23}\mapsto be_3^\ast\end{array}\right]$\\
	$\h_{11,2}^{\lambda\neq-1}$&$\left[\alpha_{13}\mapsto\tau e_3^\ast\right]$&$0$\\
	$\h_{12}^{\lambda}$&$\left[\alpha_{13}\mapsto\tau e_3^\ast\right]$&$0$\\
	$\h_{13,1}^{\lambda\neq-\frac{1}{2}}$&$\left[\alpha_{13}\mapsto\tau e_3^\ast\right]$&$0$\\
	$\h_{13,2}^{\lambda=-\frac{1}{2}}$&$\left[\begin{array}{l} \alpha_{12}\mapsto a e_2^\ast\\
	\alpha_{13}\mapsto\tau e_3^\ast\\ \alpha_{23}\mapsto b e_2^\ast\end{array}\right]$&$\left[\begin{array}{l}\alpha_{12}\mapsto a e_2^\ast\\ \alpha_{23}\mapsto b e_2^\ast\end{array}\right]$\\
	$\h_{14,1}^{\lambda\neq\{-\frac{1}{2},-1\}}$&$\left[\alpha_{13}\mapsto\tau e_3^\ast\right]$&$0$\\
	$\h_{14,2}^{\lambda=-1}$&$\left[\begin{array}{l}\alpha_{12}\mapsto a e_1^\ast\\\alpha_{13}\mapsto b e_2^\ast+ \tau e_3^\ast\\\alpha_{23}\mapsto b e_1^\ast\end{array}\right]$&$\left[\begin{array}{l}\alpha_{12}\mapsto a e_1^\ast\\\alpha_{13}\mapsto b e_2^\ast\\\alpha_{23}\mapsto b e_1^\ast\end{array}\right]$
	\\
	$\h_{14,3}^{\lambda=-\frac{1}{2}}$&$\left[\begin{array}{l}\alpha_{12}\mapsto a e_2^\ast\\\alpha_{13}\mapsto  \tau e_3^\ast\\\alpha_{23}\mapsto b e_2^\ast\end{array}\right]$&$\left[\begin{array}{l}\alpha_{12}\mapsto a e_2^\ast\\\alpha_{23}\mapsto b e_2^\ast\end{array}\right]$\\
	$\h_{15}$&$\left[\begin{array}{l}\alpha_{12}\mapsto \tau_1 e_3^\ast\\\alpha_{13}\mapsto  \tau_1 e_2^\ast\\\alpha_{23}\mapsto \tau_2 e_2^\ast\end{array}\right]$&$0$
	\\
	$\h_{16}$&$\left[\begin{array}{l}\alpha_{12}\mapsto \tau_1 e_3^\ast\\\alpha_{13}\mapsto  \tau_1 e_2^\ast\\\alpha_{23}\mapsto \tau_2 e_2^\ast\end{array}\right]$&$0$\\
	$\h_{17,1}^{\mu\neq\{\frac{1}{2},\frac{1}{3}\}}$&$0$&$0$\\
	$\h_{17,2}^{\mu=\frac{1}{2}}$&$\left[\alpha_{13}\mapsto  a e_1^\ast\right]$&$\left[\alpha_{13}\mapsto  a e_1^\ast\right]$\\
	$\h_{17,3}^{\mu=\frac{1}{3}}$&$\left[\alpha_{12}\mapsto  b e_2^\ast\right]$&$\left[\alpha_{12}\mapsto  b e_2^\ast\right]$\\
	$\h_{18,1}^{\eta\neq\{\frac{1}{2},\frac{1}{3}\}}$&$0$&$0$\\
	$\h_{18,2}^{\eta=\frac{1}{2}}$&$\left[\begin{array}{l}\alpha_{13}\mapsto  a e_1^\ast+be_2^\ast\\\alpha_{23}\mapsto  b e_1^\ast+ce_2^\ast\end{array}\right]$&$\left[\begin{array}{l}\alpha_{13}\mapsto  a e_1^\ast+be_2^\ast\\\alpha_{23}\mapsto  b e_1^\ast+ce_2^\ast\end{array}\right]$\\
$\h_{18,3}^{\eta=\frac{1}{3}}$&$\left[\alpha_{12}\mapsto  a e_1^\ast+be_2^\ast\right]$&$\left[\alpha_{12}\mapsto  a e_1^\ast+be_2^\ast\right]$\\
$\h_{19,1,\alpha,\gamma}^{\alpha\neq\{\frac{1}{2\gamma},-1+\frac{1}{\gamma},\gamma\neq\frac{1}{2}\}}$&$0$&$0$\\
$\h_{19,2}^{\alpha=\frac{1}{2\gamma},\gamma\neq\frac{1}{2}}$&$[\alpha_{23}\mapsto  a e_2^\ast]$&$[\alpha_{23}\mapsto  a e_2^\ast]$\\
$\h_{19,3}^{\alpha=-1+\frac{1}{\gamma},\gamma\neq\{\frac{1}{2},1\}}$&$\left[\begin{array}{l}\alpha_{13}\mapsto  b e_2^\ast\\\alpha_{23}\mapsto  b e_1^\ast \end{array}\right]$&$\left[\begin{array}{l}\alpha_{13}\mapsto  b e_2^\ast\\\alpha_{23}\mapsto  b e_1^\ast \end{array}\right]$\\
$\G^{\alpha,\gamma=\frac{1}{2}}_{19,4}$&$[\alpha_{13}\mapsto  c e_1^\ast ]$&$[\alpha_{13}\mapsto  c e_1^\ast ]$\\
$\h^{\alpha=-\frac{1}{2}+\frac{1}{2\gamma},\gamma\neq\{1,\frac{1}{2}\}}_{19,5}$&$[\alpha_{12}\mapsto  d e_2^\ast ]$&$[\alpha_{12}\mapsto  d e_2^\ast ]$\\
$\h_{19,6}^{\alpha=\gamma=\frac{1}{2}}$&$\left[\begin{array}{l}\alpha_{12}\mapsto  a e_2^\ast\\\alpha_{13}\mapsto  b e_1^\ast  \end{array}\right]$&$\left[\begin{array}{l}\alpha_{12}\mapsto  a e_2^\ast\\\alpha_{13}\mapsto  b e_1^\ast  \end{array}\right]$\\
$\h_{19,7}^{\alpha=-2+\frac{1}{\gamma},\gamma\neq\{1,\frac{1}{2}\}}$&$[\alpha_{12}\mapsto  a e_1^\ast  ]$&$[\alpha_{12}\mapsto  a e_1^\ast  ]$\\
$\h_{20,1}^{\gamma\neq\{1,\frac{2}{3},\frac{2}{5}\}}$&$0$&$0$\\
$\h_{20,2}^{\gamma=\frac{2}{3}}$&$\left[\begin{array}{l}\alpha_{13}\mapsto a e_2^\ast\\\alpha_{23}\mapsto a e_1^\ast\end{array}\right]$&$\left[\begin{array}{l}\alpha_{13}\mapsto a e_2^\ast\\\alpha_{23}\mapsto a e_1^\ast\end{array}\right]$\\
$\h_{20,3}^{\gamma=1}$&$[\alpha_{23}\mapsto b e_2^\ast]$&$[\alpha_{23}\mapsto b e_2^\ast]$\\
$\h_{21,1}^{\nu\neq\{\frac{1}{2},1\},\nu>0}$&$\left[\begin{array}{l}\alpha_{12}\mapsto \tau e_3^\ast\\\alpha_{23}\mapsto -\tau e_1^\ast\end{array}\right]$&$0$\\
$\h_{20,4}^{\gamma=\frac{2}{5}}$&$[\alpha_{12}\mapsto c e_1^\ast]$&$[\alpha_{12}\mapsto c e_1^\ast]$
\\
$\h_{21,2}^{\nu=\frac{1}{2}}$&$\left[\begin{array}{l}\alpha_{12}\mapsto \tau e_3^\ast\\\alpha_{13}\mapsto a e_1^\ast\\\alpha_{23}\mapsto -\tau e_1^\ast\end{array}\right]$&$[\alpha_{13}\mapsto a e_1^\ast]$\\
$\h_{21,3}^{\nu=1}$&$\left[\begin{array}{l}\alpha_{12}\mapsto b e_1^\ast+\tau e_3^\ast\\\alpha_{23}\mapsto -\tau e_1^\ast\end{array}\right]$&$[\alpha_{12}\mapsto b e_1^\ast]$\\
$\h_{22,1}^{\beta\neq\frac{1}{2\delta_1}}$&$0$&$0$\\
$\h_{22,2}^{\beta=\frac{1}{2\delta_1}}$&$[\alpha_{23}\mapsto ae_2^\ast]$&$[\alpha_{23}\mapsto ae_2^\ast]$
\\
$\h_{23}^{\beta=0}$&$\left[\begin{array}{l}\alpha_{12}\mapsto \tau e_3^\ast\\\alpha_{13}\mapsto \tau e_2^\ast\end{array}\right]$&$0$
		\\\hline
		\end{longtable}}

\newpage
\subsection{Symplectic structures on six-dimensional Frobeniusian Lie algebras with Lagrangian ideal}
\begin{center}
\captionof{table}{Symplectic structures on six-dimensional Frobeniusian Lie algebras with Lagrangian ideal}\label{cocycleofall}
{\renewcommand*{\arraystretch}{1.2}
	\small\begin{tabular}{l l l}\hline
		Algebra&Symplectic form&Conditions on $\omega_{ij}$\\\hline
		\multirow{1}{*}{$\G_{6,1}$}&$\omega=\omega_{12}e^{12}+\omega_{13}e^{13}+\omega_{23}e^{23}+\omega_{34}e^{34}+\omega_{35}e^{35}+\omega_{36}\mathrm{d}e^6$&$\omega_{36}\neq0$\\
		\multirow{1}{*}{$\G^{b}_{6,2}$}&$\omega=\omega_{12}e^{12}+\omega_{13}e^{13}+\omega_{23}e^{23}+\omega_{34}(e^{14}+e^{15}+be^{26}+e^{34})$\\
		&$~~~~~~+\omega_{35}(be^{16}+be^{24}-be^{26}+e^{35})+\omega_{36}\mathrm{d}e^6$&$\Omega_1\neq0$\\
		\multirow{1}{*}{$\G_{6,3}$}&$\omega=\omega_{12}e^{12}+\omega_{13}e^{13}+\omega_{23}e^{23}+\omega_{34}e^{34}+\omega_{35}(e^{24}+e^{35})+\omega_{36}\mathrm{d}e^6$&$\omega_{36}\neq0$\\
		\multirow{1}{*}{$\G^{a}_{6,4}$}&$\omega=\omega_{13}e^{13}+\omega_{23}e^{23}+\omega_{16}(-ae^{14}-e^{15}+e^{16}-ae^{25})$\\
		&~~~~~~~~$+\omega_{35}(e^{15}+e^{35})+\omega_{36}\mathrm{d}e^4$&$\omega_{36}(a\omega_{16}-\omega_{36})\neq0$\\
		$\G_{6,5}$&$\omega=\omega_{13}e^{13}+\omega_{23}e^{23}+\omega_{35}(e^{15}+e^{35})+\omega_{16}(e^{16}+e^{36})+\omega_{25}\mathrm{d}e^4$&$\omega_{25}(\omega_{25}+\omega_{16})\neq0$\\
		$\G_{6,6}$&$\omega=\omega_{13}e^{13}+\omega_{23}e^{23}+\omega_{16}e^{16}+\omega_{35}(e^{15}+e^{35})+\omega_{36}\mathrm{d}e^4$&$\omega_{36}\neq0$\\
		$\G_{6,7}$&$\omega=\omega_{13}e^{13}+\omega_{23}e^{23}+\omega_{16}(-2\,e^{14}+e^{16}-2\,e^{25})$&\\
		&$~~~~~~~~+\omega_{35}(e^{15}+e^{26}+e^{35})+\omega_{36}\mathrm{d}e^4$&$\Omega^+\neq0$\\
		$\G_{6,8}$&$\omega=\omega_{13}e^{13}+\omega_{23}e^{23}+\omega_{16}(-2\,e^{14}+e^{16}-2\,e^{25})$&\\
		&$~~~~~~~~+\omega_{35}(e^{15}-e^{26}+e^{35})+\omega_{36}\mathrm{d}e^4$&$\Omega^-\neq0$\\
		$\G^{a}_{6,9}$&$\omega=\omega_{13}e^{13}+\omega_{23}e^{23}+\omega_{16}(-e^{14}-(a+1)e^{15}+e^{16}-e^{25})$&\\
		&$~~~~~~~~+\omega_{35}(e^{15}+e^{35})+\omega_{36}\mathrm{d}e^4$&$\omega_{36}(\omega_{16}-\omega_{36})\neq0$\\
	 $\G_{6,10,1}^{a,b,\lambda=-1}$	&$\omega=\omega_{13}e^{13}+\omega_{15}e^{15}+\omega_{23}e^{23}+\omega_{34}(-\frac{a}{b}e^{14}+\frac{b-a}{b}e^{16}-e^{25}+e^{34})$&\\
	 &$~~~~~~~~+\omega_{36}\mathrm{d}e^6$&$\Omega_2\neq0$\\
	 $\G_{6,10,2}^{a,b,\lambda\neq-1}$&$\omega=\omega_{12}e^{12}+\omega_{13}e^{13}+\omega_{34}(\frac{a\lambda}{b}e^{14}+\frac{\lambda^2(b-a)}{b}e^{16}+\lambda e^{25}+e^{34})$&\\
	 &$~~~~~~~~+\omega_{35}(\frac{\lambda}{1+\lambda}e^{15}+e^{35})+\lambda\omega_{36}\mathrm{d}e^6$&$\Omega_3\neq0$\\
	 $\G_{6,11,1}^{a,b,\lambda=-1}$&$\omega=\omega_{13}e^{13}+\omega_{15}e^{15}+\omega_{23}e^{23}+\omega_{34}(ae^{12}-e^{14}-e^{25}+e^{34})$&\\
	&$~~~~~~~~+\omega_{36}\mathrm{d}e^6$& $\omega_{36}(\omega_{36}-\omega_{34})\neq0$\\
	$\G^{\lambda\neq\pm1}_{6,11,2}$&$\omega=\omega_{13}e^{13}+\omega_{23}e^{23}+\omega_{34}(\lambda e^{14}+\lambda e^{25}+e^{34})+\omega_{35}(\frac{\lambda}{\lambda+1}e^{15}+e^{35})$&\\
	&$~~~~~~~~+\lambda\omega_{36}\mathrm{d}e^6$&$\omega_{36}(\lambda\omega_{34}+\omega_{36})\neq0$\\
	$\G^{\lambda=1}_{6,11,2}$&$\omega=\omega_{13}e^{13}+\omega_{23}e^{23}+\omega_{34}(e^{14}+ e^{25}+e^{34})+\omega_{15}(e^{15}+2\,e^{35})$&\\
	&$~~~~~~~~~+\omega_{26}(-e^{24}+e^{26})+\omega_{36}\mathrm{d}e^6$&$\Omega_4\neq0$\\
	$\G_{6,12}^{\lambda\neq-2}$&$\omega=\omega_{13}e^{13}+\omega_{23}e^{23}+\omega_{15}(e^{15}+\frac{1+\lambda}{\lambda}e^{35})+\omega_{34}(\lambda^2 e^{16}+\lambda e^{25}+e^{34})$&\\
	&$~~~~~~~~~+\lambda\omega_{36}\mathrm{d}e^6$&$\lambda\omega_{34}\pm\omega_{36}\neq0$
	\\
	$\G_{6,12}^{\lambda=-2}$&$\omega=\omega_{13}e^{13}+\omega_{23}e^{23}+\omega_{34}(e^{16}-2\,e^{25}+e^{34})+\omega_{35}(e^{15}+e^{35})$&\\
	&$~~~~~~~~~~+\omega_{45}(e^{45}-2\,e^{56})+2\,\omega_{36}\mathrm{d}e^6$&$\Omega_5\neq0$\\
	$\G_{6,13,1}^{\lambda}$&$\omega=\omega_{13}e^{13}+\omega_{23}e^{23}+\omega_{35}e^{35}+\omega_{34}(\lambda\,e^{14}+e^{34})+\lambda\,\omega_{36}\mathrm{d}e^6$&$\omega_{36}(\lambda\,\omega_{34}+\omega_{36})\neq0$\\
	$^{\lambda\neq\{\pm 1,-2,-\frac{1}{2}\}}$&&\\
	$\G_{6,13,1}^{\lambda=-1}$&$\omega=\omega_{13}e^{13}+\omega_{15}e^{14}+\omega_{23}e^{23}+\omega_{35}e^{35}+\omega_{34}(-e^{14}+e^{34})+\omega_{36}\mathrm{d}e^6$&$\omega_{36}(\omega_{36}-\omega_{34})\neq0$\\
	$\G_{6,13,1}^{\lambda=1}$&$\omega=\omega_{13}e^{13}+\omega_{23}e^{23}+\omega_{35}e^{35}+\omega_{34}(e^{14}+e^{34})+\omega_{26}(-e^{24}+e^{26})$&\\
	&$~~~~~~~~~~+\omega_{36}\mathrm{d}e^6$&$\Omega_6\neq0$\\
	$\G_{6,13,1}^{\lambda=-2}$&$\omega=\omega_{13}e^{13}+\omega_{23}e^{23}+\omega_{35}e^{35}+\omega_{34}(-2\,e^{14}+e^{34})+\omega_{45}(e^{45}-2\,e^{56})$&\\
	&$~~~~~~~~~~+\omega_{36}\mathrm{d}e^6$&$\Omega_7\neq0$\\
	$\G_{6,13,2}^{\lambda=-\frac{1}{2}}$&$\omega=\omega_{13}e^{13}+\omega_{23}e^{23}+\omega_{34}(-\tfrac{1}{2}\,e^{14}+e^{34})+\omega_{35}(a\,e^{12}+e^{35})+\frac{1}{2}\omega_{36}\mathrm{d}e^6$&$\omega_{36}(\omega_{34}-2\,\omega_{36})\neq0$
	
	\end{tabular}}
	\end{center}
	\begin{center}
	{\renewcommand*{\arraystretch}{1.3}
	\small\begin{tabular}{l l l}
	$\G_{6,14,1}^{\lambda}$&$\omega=\omega_{13}e^{13}+\omega_{23}e^{23}+\omega_{34}e^{34}+\omega_{35}e^{35}+\lambda\,\omega_{36}\mathrm{d}e^6$,~$\lambda\neq\{\pm1,-2,-\tfrac{1}{2}\}$&$\omega_{36}\neq0$\\
	$\G_{6,14,1}^{\lambda=-2}$&$\omega=\omega=\omega_{13}e^{13}+\omega_{23}e^{23}+\omega_{34}e^{34}+\omega_{35}e^{35}+\omega_{45}e^{45}+\omega_{36}\mathrm{d}e^6$&$\omega_{36}\neq0$\\
	$\G_{6,14,1}^{\lambda=1}$&$\omega=\omega_{13}e^{13}+\omega_{23}e^{23}+\omega_{24}e^{24}+\omega_{34}e^{34}+\omega_{35}e^{35}+\omega_{36}\mathrm{d}e^6$&$\omega_{36}\neq0$\\
	$\G_{6,14,2}^{\lambda=-1}$&$\omega=\omega_{13}e^{13}+\omega_{15}e^{15}+\omega_{23}e^{23}+\omega_{35}e^{35}+\omega_{34}(a\,e^{12}+e^{34})+\omega_{36}\mathrm{d}e^6$&$\omega_{36}\neq0$\\
	$\G_{6,14,3}^{\lambda=-\frac{1}{2}}$&$\omega=\omega_{13}e^{13}+\omega_{23}e^{23}+\omega_{34}e^{34}+\omega_{35}(a\,e^{12}+e^{35})+\frac{1}{2}\omega_{36}\mathrm{d}e^6$&$\omega_{36}\neq0$\\
	$\G_{6,15}$&$\omega=\omega_{12}e^{12}+\omega_{13}e^{13}+\omega_{23}e^{23}+\omega_{26}e^{26}+\omega_{24}(e^{16}+e^{24})-\omega_{36}\mathrm{d}e^5$&$\omega_{36}\neq0$\\
	$\G_{6,16}$&$\omega=\omega_{12}e^{12}+\omega_{13}e^{13}+\omega_{23}e^{23}+\omega_{26}e^{26}+\omega_{24}e^{24}-\omega_{36}\mathrm{d}e^5$&$\omega_{36}\neq0$\\
	$\G_{6,17,1}^{ \mu\neq\{1,\frac{1}{2},\frac{1}{3}\}}$&$\omega=\omega_{13}e^{13}+\omega_{23}e^{23}+\omega_{34}e^{34}+\omega_{35}e^{35}-\mu\omega_{36}\mathrm{d}e^6$&$\omega_{36}\neq0$\\
	$\G_{6,17,1}^{ \mu=1}$&$\omega=\omega_{13}e^{13}+\omega_{23}e^{23}+\omega_{34}e^{34}+\omega_{35}e^{35}+\omega_{45}e^{45}-\omega_{36}\mathrm{d}e^6$&$\omega_{36}\neq0$\\
	$\G_{17,2}^{\mu=\frac{1}{2}}$&$\omega=\omega_{13}e^{13}+\omega_{23}e^{23}+\omega_{34}e^{34}+\omega_{35}e^{35}e^{45}-\frac{1}{2}\omega_{36}\mathrm{d}e^6$&$\omega_{36}\neq0$
	\\
	$\G_{17,3}^{\mu=\frac{1}{3}}$&$\omega=\omega_{13}e^{13}+\omega_{23}e^{23}+\omega_{34}e^{34}+\omega_{35}(-\frac{b}{2}e^{12}+ e^{35})-\frac{1}{3}\omega_{36}\mathrm{d}e^6$&$\omega_{36}\neq0$\\
	$\G_{6,18,1}^{\eta\neq\{1,\frac{1}{2},\frac{1}{3}\}}$&$\omega=\omega_{13}e^{13}+\omega_{23}e^{23}+\omega_{34}e^{34}+\omega_{35}e^{35}-\eta\omega_{36}\mathrm{d}e^6$&$\omega_{36}\neq0$\\
$\G_{6,18,1}^{\eta=1}$&$\omega=\omega_{13}e^{13}+\omega_{23}e^{23}+\omega_{34}e^{34}+\omega_{35}e^{35}+\omega_{45}e^{45}-\omega_{36}\mathrm{d}e^6$&$\omega_{36}\neq0$\\
$\G_{6,18,2}^{\eta=\frac{1}{2}}$&$\omega=\omega_{13}e^{13}+\omega_{23}e^{23}+\omega_{34}e^{34}+\omega_{35}e^{35}-\frac{1}{2}\omega_{36}\mathrm{d}e^6$&$\omega_{36}\neq0$\\
$\G_{6,18,3}^{\eta=\frac{1}{3}}$&$\omega=\omega_{13}e^{13}+\omega_{23}e^{23}+\omega_{34}(-\frac{a}{2}e^{12}+ e^{34})+\omega_{35}(-\frac{b}{2}e^{12}+e^{35})-\frac{1}{3}\omega_{36}\mathrm{d}e^6$&$\omega_{36}\neq0$\\
$\G_{6,19,1}^{\alpha,\gamma}$&$\omega=\omega_{13}e^{13}+\omega_{23}e^{23}+\omega_{34}e^{34}+\omega_{35}e^{35}-\gamma\omega_{36}\mathrm{d}e^6$&$\omega_{36}\neq0$\\
$\G_{6,19,1}^{\alpha=-1+\frac{2}{\gamma},\gamma}$&$\omega=\omega_{13}e^{13}+\omega_{23}e^{23}+\omega_{34}e^{34}+\omega_{35}e^{35}+\omega_{45}e^{45}-\gamma\omega_{36}\mathrm{d}e^6$&$\omega_{36}\neq0$\\
$\G_{6,19,1}^{\alpha=\frac{1}{3},\gamma=\frac{3}{2}}$&$\omega=\omega_{13}e^{13}+\omega_{23}e^{23}+\omega_{24}e^{24}+\omega_{34}e^{34}+\omega_{35}e^{35}+\omega_{45}e^{45}-\frac{3}{2}\omega_{36}\mathrm{d}e^6$&$\omega_{36}\neq0$\\
$\G_{6,19,1}^{\alpha=1-\frac{1}{\gamma},\gamma}$&$\omega=\omega_{13}e^{13}+\omega_{23}e^{23}+\omega_{24}e^{24}+\omega_{34}e^{34}+\omega_{35}e^{35}-\gamma\omega_{36}\mathrm{d}e^6$&$\omega_{36}\neq0$\\
$\G_{6,19,1}^{\alpha=1+\frac{1}{\gamma},\gamma}$&$\omega=\omega_{13}e^{13}+\omega_{15}e^{15}+\omega_{23}e^{23}+\omega_{34}e^{34}+\omega_{35}e^{35}-\gamma\omega_{36}e^{36}$&$\omega_{36}\neq0$\\
$\G_{6,19,2}^{\alpha=\frac{1}{2\gamma},\gamma}$&$\omega=\omega_{13}e^{13}+\omega_{23}e^{23}+\omega_{34}e^{34}+\omega_{35}e^{35}-\gamma\omega_{36}\mathrm{d}e^{6}$&$\omega_{36}\neq0$\\
$\G_{6,19,2}^{\alpha=\frac{1}{3},\gamma=\frac{3}{2}}$&$\omega=\omega_{13}e^{13}+\omega_{23}e^{23}+\omega_{24}e^{24}+\omega_{34}e^{34}+\omega_{35}e^{35}-\frac{3}{2}\omega_{36}\mathrm{d}e^{6}$&$\omega_{36}\neq0$\\
$\G_{6,19,2,a=0}^{\alpha=\frac{1}{3},\gamma=\frac{3}{2}}$&$\omega=\omega_{13}e^{13}+\omega_{23}e^{23}+\omega_{24}e^{24}+\omega_{34}e^{34}+\omega_{35}e^{35}+\omega_{45}e^{45}-\frac{3}{2}\omega_{36}\mathrm{d}e^{6}$&$\omega_{36}\neq0$\\
$\G_{6,19,3}^{\alpha=-1+\frac{1}{\gamma},\gamma}$&$\omega=\omega_{13}e^{13}+\omega_{23}e^{23}+\omega_{34}e^{34}+\omega_{35}e^{35}-\gamma\omega_{36}\mathrm{d}e^{6}$&$\omega_{36}\neq0$\\
$\G_{6,19,4}^{\alpha,\gamma=\frac{1}{2}}$&$\omega=\omega_{13}e^{13}+\omega_{23}e^{23}+\omega_{34}e^{34}+\omega_{35}e^{35}-\frac{1}{2}\omega_{36}\mathrm{d}e^{6}$&$\omega_{36}\neq0$\\
$\G_{6,19,5}^{\alpha=-\frac{1}{2}+\frac{1}{2\,\gamma},\gamma}$&$\omega=\omega_{13}e^{13}+\omega_{23}e^{23}+\omega_{34}e^{34}+\omega_{35}(-\frac{-2\,\gamma d }{\gamma+1}e^{12}+e^{35})-\gamma\omega_{36}\mathrm{d}e^6$&$\omega_{36}\neq0$\\
$\G_{6,19,5}^{\alpha=-\frac{1}{3},\gamma=3}$&$\omega=\omega_{13}e^{13}+\omega_{23}e^{23}+\omega_{34}e^{34}+\omega_{35}(-\frac{3d}{2}e^{12}+e^{35})+\omega_{45}(-3de^{26}+e^{45})$\\
&$~~~~~~~-\frac{1}{2}\omega_{36}\mathrm{d}e^6$&$\Omega_8\neq0$
\\
$\G_{6,19,6}^{\alpha=\gamma=\frac{1}{2}}$&$\omega=\omega_{13}e^{13}+\omega_{23}e^{23}+\omega_{34}e^{34}+\omega_{35}(-\frac{2\,a}{3}e^{12}+e^{35})-\frac{1}{2}\omega_{36}\mathrm{d}e^6$&$\omega_{36}\neq0$\\
$\G^{\alpha=-2+\frac{1}{\gamma},\gamma}_{6,19,7}$&$\omega=\omega_{13}e^{13}+\omega_{23}e^{23}+\omega_{35}e^{35}+\omega_{34}(\frac{a\,\gamma}{-1+\gamma} e^{12}+e^{34})-\gamma\omega_{36}\mathrm{d}e^6$&$\omega_{36}\neq0$\\
$\G^{\alpha=-\frac{1}{2},\gamma=\frac{2}{3}}_{6,19,7}$&$\omega=\omega_{13}e^{13}+\omega_{23}e^{23}+\omega_{24}e^{24}+\omega_{35}e^{35}+\omega_{34}(-2\,a\,e^{12}+e^{34})-\frac{2}{3}\omega_{36}\mathrm{d}e^6$&$\omega_{36}\neq0$\\
$\G_{6,20,1}^{\gamma\neq\{1,\frac{2}{3}\}}$&$\omega=\omega_{13}e^{13}+\omega_{23}e^{23}+\omega_{34}e^{34}+\omega_{24}(e^{24}+\frac{2-\gamma}{2\,\gamma}e^{35})-\gamma\omega_{36}\mathrm{d}e^6$&$\omega_{36}\neq0$\\
$\G_{6,20,1}^{\gamma=-2}$&$\omega=\omega_{13}e^{13}+\omega_{23}e^{23}+\omega_{34}e^{34}+\omega_{15}(e^{15}-2\,e^{26})+2\,\omega_{36}\mathrm{d}e^6$&$\Omega_9\neq0$\\
$\G_{6,20,1}^{\gamma=\frac{4}{3}}$&$\omega=\omega_{13}e^{13}+\omega_{23}e^{23}+\omega_{34}e^{34}+\omega_{45}e^{45}+\omega_{24}(e^{24}+\frac{1}{4}e^{35})-\frac{4}{3}\omega_{36}\mathrm{d}e^6$&$\omega_{36}\neq0$
		\end{tabular}}
		\end{center}
		\newpage
\begin{center}
{\renewcommand*{\arraystretch}{1.3}
	\small\begin{tabular}{l l l}
	$\G_{6,20,2}^{\gamma=\frac{2}{3}}$&$\omega=\omega_{13}e^{13}+\omega_{23}e^{23}+\omega_{34}e^{34}+\omega_{45}e^{45}+\omega_{35}(e^{24}+e^{35})-\omega_{36}\mathrm{d}e^6$&$\omega_{36}\neq0$\\
	$\G_{6,20,3}^{\gamma=1}$&$\omega=\omega_{13}e^{13}+\omega_{23}e^{23}+\omega_{34}e^{34}+\omega_{45}e^{45}+\omega_{35}(2\,e^{24}+e^{35})-\omega_{36}\mathrm{d}e^6$&$\omega_{36}\neq0$\\
	$\G_{6,20,4}^{\gamma=\frac{2}{5}}$&$\omega=\omega_{13}e^{13}+\omega_{23}e^{23}+\omega_{24}(e^{24}+2e^{35})+\omega_{34}(-\frac{2a}{3}e^{12}+e^{34})+\omega_{36}\mathrm{d}e^6$&$\omega_{36}\neq0$
	\\
	$\G_{6,21,1}^{\nu\neq\{\frac{1}{2},1\}}$&$\omega=\omega_{12}e^{12}+\omega_{13}e^{13}+\omega_{23}e^{23}+\omega_{34}e^{34}+\omega_{35}e^{35}-\nu\,\omega_{36}\mathrm{d}e^6$&$\omega_{36}\neq0$\\
	$\G_{6,21,2}^{\nu=\frac{1}{2}}$&$\omega=\omega_{12}e^{12}+\omega_{13}e^{13}+\omega_{23}e^{23}+\omega_{34}e^{34}+\omega_{35}e^{35}-\frac{1}{2}\,\omega_{36}\mathrm{d}e^6+(\omega_{24}e^{24}~\text{if}~a=0)$&$\omega_{36}\neq0$\\
	$\G_{6,21,3}^{\nu=1}$&$\omega=\omega_{12}e^{12}+\omega_{13}e^{13}+\omega_{23}e^{23}+\omega_{34}e^{34}+\omega_{35}e^{35}-\omega_{36}\mathrm{d}e^6+(\omega_{34}e^{34}~\text{if}~b=0)$&$\omega_{36}\neq0$\\
$\G^{\beta\neq\frac{1}{2\delta_1}}_{6,22,1}$	&$\omega=\omega_{13}e^{13}+\omega_{23}e^{23}+\omega_{34}e^{34}+\omega_{35}e^{35}-\delta_1\omega_{36}\mathrm{d}e^6$&$\omega_{36}\neq0$\\
$\G^{\beta=\frac{1}{\delta_1}}_{6,22,1}$	&$\omega=\omega_{13}e^{13}+\omega_{23}e^{23}+\omega_{34}e^{34}+\omega_{35}e^{35}+\omega_{45}e^{45}-\delta_1\omega_{36}\mathrm{d}e^6$&$\omega_{36}\neq0$\\
$\G^{\beta=\frac{1}{2\delta_1}}_{6,22,2}$&$\omega=\omega_{13}e^{13}+\omega_{23}e^{23}+\omega_{34}e^{34}+\omega_{35}e^{35}-\delta_2\,\omega_{36}\mathrm{d}e^6$&$\omega_{36}\neq0$\\
$\G_{6,23}^{\delta_2}$&$\omega=\omega_{12}e^{12}+\omega_{13}e^{13}+\omega_{23}e^{23}+\omega_{34}e^{34}+\omega_{35}e^{35}-\delta_2\,\omega_{36}\mathrm{d}e^6$&$\omega_{36}\neq0$

	\\\hline
	\end{tabular}}
\end{center}
$\Omega_1=\omega_{3 5}^3b^2 + (\omega_{3 4}^3 - 2\,\omega_{3 5}\omega_{3 4}^2 + \omega_{3 5}(\omega_{3 5} - 3\,\omega_{3 6})\omega_{3 4} + \omega_{3 5}^2\omega_{3 6})b + \omega_{3 6}^2(\omega_{3 4} + \omega_{3 6})$		\\
$\Omega^{\pm}=(2\,\omega_{1 6} - \omega_{3 6})(2\,\omega_{1 6}\omega_{3 6} \pm \omega_{3 5}^2 - \omega_{3 6}^2)$\\
$\Omega_2=((a - b)\,\omega_{3 4} - b\,\omega_{3 6})(\omega_{3 4} - \omega_{3 6})$\\
$\Omega_3=(\lambda(a - b)\omega_{3 4} + \omega_{3 6}b)(\lambda\omega_{3 4} + \omega_{3 6})$\\
$\Omega_4=(\omega_{3 4} + \omega_{3 6})(\omega_{1 5}\omega_{2 6} - \omega_{3 4}\omega_{3 6} - \omega_{3 6}^2)$\\
$\Omega_5=(2\,\omega_{2 3}\omega_{4 5} - 4\,\omega_{3 4}^2 + \omega_{3 6}^2)(2\,\omega_{3 4} - \omega_{3 6})$\\
$\Omega_6=(\omega_{34}-\omega_{36})(\omega_{26}\omega_{35}-\omega_{36}^2)$
\\
$\Omega_7=(2\,\omega_{3 4} - \omega_{3 6})(2\,\omega_{2 3}\omega_{4 5} + \omega_{3 6}^2)$\\
$\Omega_8=(6\,d\,\omega_{1 3}\omega_{4 5}^2 - 9\,d\,\omega_{3 5}\omega_{3 6}\omega_{4 5} - 2\,\omega_{3 6}^3)$\\
$\Omega_9=(2\,\omega_{15}^2\omega_{34} - 3\,\omega_{15}\omega_{35}\omega_{36} - \omega_{36}^3)$

\subsection{Symplectic structures on six-dimensional Frobeniusian Lie algebras without Lagrangian ideal}
\begin{center}
\captionof{table}{Symplectic structures on six-dimensional Frobeniusian Lie algebras without Lagrangian ideal}\label{cocyclewithout}
{\renewcommand*{\arraystretch}{1.3}
	\small
}
		\end{center}
		\newpage
\addcontentsline{toc}{chapter}{Bibliography}

\end{document}